\def\In#1{{\text{\rm I{}$_{#1}$}}}
\def\IIS{\text{\rm II$^*$}}
\DeclareSymbolFont{cyrletters}{OT2}{wncyr}{m}{n}
\DeclareMathSymbol{\Sha}{\mathalpha}{cyrletters}{"58}
\definecolor{refkey}{rgb}{1,1,1}
\definecolor{labelkey}{rgb}{1,1,1}
\definecolor{cite}{rgb}{0.9451,0.2706,0.4941}
\definecolor{ruri}{rgb}{0.0078,0.4022,0.8010}
\def\F{{\rm \mathbb{F}}}
\def\Z{{\rm \mathbb{Z}}}
\def\Q{{\rm \mathbb{Q}}}
\def\QQ{{\rm \mathbb{Q}}}
\def\C{{\rm \mathbb{C}}}
\def\p{{\rm \mathfrak{p}}}
\def\n{{\rm \mathfrak{n}}}
\def\q{{\rm \mathfrak{q}}}
\def\a{{\rm \mathfrak{a}}}
\def\c{{\rm \mathfrak{c}}}
\def\O{{\rm \mathcal{O}}}
\def\Disc{{\rm Disc}}
\def\Gal{{\rm Gal}}
\def\End{{\rm End}}
\def\Sel{{\rm Sel}}
\numberwithin{equation}{section}
\newtheorem{theorem}{Theorem}[section]
\newtheorem{lemma}[theorem]{Lemma}
\newtheorem{remark}[theorem]{Remark}
\newtheorem{definition}[theorem]{Definition}
\newtheorem{example}[theorem]{Example}
\newtheorem{conjecture}[theorem]{Conjecture}
\newtheorem{corollary}[theorem]{Corollary}
\newtheorem{proposition}[theorem]{Proposition}
\def\shownotes{\def\inline##1##2##3{ \begin{adjustwidth}{3mm}{7mm}\mbox{}\par \noindent
{\color{##1}\hspace{-1.9cm}{\large ##2}\vspace{-\baselineskip}\\##3}
\newline\end{adjustwidth}} \def\inlinewide##1##2##3{ \begin{adjustwidth}{0mm}{0cm}\mbox{}\par \noindent
{\color{##1}\hspace{-1.6cm}{\large ##2}\vspace{-\baselineskip}\\##3}
\newline\end{adjustwidth}}  \def\marg##1##2##3{\marginnote{\color{##1}{\large ##2}\\{\small ##3}}[-.8cm]}}
\begin{document}
\setlength{\parskip}{2pt} 
\setlength{\parindent}{8pt}
\title[Tate-Shafarevich groups]{Elements of given order in Tate-Shafarevich groups of abelian varieties in quadratic twist families}

\author{Manjul Bhargava}
\address{Department of Mathematics, Princeton University, Princeton, NJ 08544}
\email{bhargava@math.princeton.edu}

\author{Zev Klagsbrun}
\address{Center for Communications Research, San Diego, CA 92121}
\email{zdklags@ccrwest.org}

\author{Robert J. Lemke Oliver}
\address{Department of Mathematics, Tufts University, Medford, MA 02155}
\email{robert.lemke{\_{}}oliver@tufts.edu}

\author{Ari Shnidman}
\address{Einstein Institute of Mathematics, Hebrew University, Jerusalem, Israel} 
\email{ariel.shnidman@mail.huji.ac.il}

\maketitle

\begin{abstract}
Let $A$ be an abelian variety over a number field $F$ and let $p$ be a prime.  Cohen-Lenstra-Delaunay-style heuristics predict that the Tate-Shafarevich group $\Sha(A_s)$ should contain an element of order $p$ for a positive proportion of quadratic twists $A_s$ of $A$. We give a general method to prove instances of this conjecture by exploiting independent isogenies of $A$.  For each prime $p$, there is a large class of elliptic curves for which our method shows that a positive proportion of quadratic twists have nontrivial $p$-torsion in their Tate-Shafarevich groups.  In particular, when the modular curve $X_0(3p)$ has infinitely many $F$-rational points the method applies to ``most'' elliptic curves $E$ having a cyclic $3p$-isogeny.  It also applies in certain cases when $X_0(3p)$ has only finitely many points.  For example, we find an elliptic curve over $\mathbb{Q}$ for which a positive proportion of quadratic twists have an element of order $5$ in their Tate-Shafarevich groups.

The method applies to abelian varieties of arbitrary dimension, at least in principle.  As a proof of concept, we give, for each prime $p \equiv 1 \pmod 9$, examples of CM abelian threefolds with a positive proportion of quadratic twists having elements of order $p$ in their Tate-Shafarevich groups.     
\end{abstract}

\section{Introduction}

Let $A$ be an abelian variety over a number field $F$ and let $n$ be a positive integer.  The $n$-Selmer group $\mathrm{Sel}_n(A)$
sits in the short exact sequence
\[
0 \to A(F)/nA(F) \to \mathrm{Sel}_n(A) \to \Sha(A)[n] \to 0
\]
between the (weak) Mordell-Weil group and the $n$-torsion of the Tate-Shafarevich group $\Sha(A)$.  For the definitions of these groups, see \cite[\S X]{AEC}.  Thus, the presence of $n$-torsion in $\Sha(A)$ is the obstruction to computing the rank of the group of rational points $A(F)$ via $n$-descent.  

One is immediately led to ask how often the group $\Sha(A)[n]$ is nontrivial as $A$ varies.  Consider, for example, the quadratic twist  family $A_s$ for $s \in F^*/F^{*2}$, obtained by twisting $A$ by the different quadratic characters of $\Gal(\overline{F}/F)$.  For this family, one conjectures that there is often such an obstruction:

\begin{conjecture}\label{conj:weak-sha}
If $A$ is an abelian variety over a number field $F$ and $n \geq 2$ is a positive integer, then $\Sha(A_s)$ has an element of order $n$ for a positive proportion of $s \in F^*/F^{*2}$, when ordered by height.
\end{conjecture}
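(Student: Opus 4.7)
The plan is to reduce the conjecture to a comparison between Selmer and Mordell--Weil growth across the quadratic twist family. Since $\Sha(A_s)[n] = \bigoplus_\ell \Sha(A_s)[\ell^{a_\ell}]$ and $\Sha(A_s)[\ell^{a}]$ surjects onto $\Sha(A_s)[\ell]$, it suffices to treat the case $n = \ell$ prime. For such $\ell$, because $\Sha(A_s)[\ell]$ is the cokernel of $A_s(F)/\ell A_s(F) \hookrightarrow \Sel_\ell(A_s)$, one must exhibit on a positive proportion of $s$ a strict inequality $\dim_{\F_\ell} \Sel_\ell(A_s) > \dim_{\F_\ell} A_s(F)/\ell A_s(F)$. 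I would try to engineer an upper bound on the right that does not grow, together with a lower bound on the left that does.

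For the Selmer lower bound, the natural tool is isogeny descent. If, possibly after finite base change and possibly after replacing $A$ by an isogeny partner, one can arrange an $F$-rational isogeny $\phi\colon A\to A'$ of degree divisible by $\ell$, then $\Sel_\phi(A_s)$ admits a clean description in terms of local Selmer conditions at primes of bad reduction of $A$ and at places above $\ell$, and these conditions depend only on how $s$ interacts with a finite set of primes. A combinatorial average over quadratic twists $s$ ordered by height then produces, on a positive proportion of $s$, a prescribed lower bound on $\dim \Sel_\phi(A_s)$. The exact sequence
\[
0 \to \Sel_\phi(A_s) \to \Sel_\ell(A_s) \to \Sel_{\hat\phi}(A'_s)
\]
coming from $\hat\phi\circ\phi = [\ell]$ lifts this to a lower bound on $\Sel_\ell(A_s)$ itself.

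For the Mordell--Weil upper bound, the plan is to combine root-number parity in twist families with a second, independent isogeny descent applied to $A$ or to $A'$, thereby capping $\rk A_s(F)$ on a positive-density subset. With a Selmer lower bound on one side and a Mordell--Weil upper bound on the other, the difference must be absorbed by $\Sha(A_s)[\ell]$. This is, in effect, the paradigm the present paper develops; the more independent isogenies one has available, the more freedom one has to win the arithmetic inequality on a positive-density set.

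The main obstacle, and the reason the statement must be left as a conjecture, is the case of abelian varieties $A$ with $\End_F(A) = \Z$ and no $F$-rational cyclic subgroup scheme of order $\ell$, which, for fixed dimension and $\ell$, is the generic case. Here no $\phi$-Selmer filtration is available, $\Sel_\ell(A_s)$ is accessible only through direct Galois cohomology in $H^1(F, A_s[\ell])$, and there is no known mechanism for producing locally trivial, globally nontrivial classes in positive density in such families. Any serious attack on the conjecture in this generality appears to require essentially new input: either a non-isogeny analog of the $\phi$-Selmer filtration (for example via Tate module endomorphisms that exist only $\overline{F}$-rationally and are descended by Shapiro's lemma over the splitting field), or an unconditional version of the orthogonal-Selmer heuristics of Bhargava--Kane--Lenstra--Poonen--Rains, which predict the right density but have so far resisted being turned into proofs for $\ell \geq 3$. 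I expect this endomorphism-free case to remain the sticking point.
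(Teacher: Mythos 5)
The statement you were asked about is Conjecture~\ref{conj:weak-sha}, and the paper does not prove it: it is stated as an open conjecture, and the body of the paper only establishes special cases (elliptic curves with a cyclic $9$- or $18$-isogeny, curves coming from rational points of $X_0(3p)$, certain CM curves and abelian threefolds, etc.). So there is no proof in the paper to compare yours against, and your own writeup is, correctly, not a proof either --- it is a strategy sketch that ends by conceding that the generic case ($\End_F(A)=\Z$, no rational $\ell$-isogeny) is out of reach. That concession is the right conclusion; to date no abelian variety is known to satisfy the conjecture for any $n$ that is not a power of two except via the constructions in this paper and its successors.

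As a description of the method the paper actually uses for its special cases, your sketch is accurate: one isogeny $\psi$ is used to force $\Sel_{\psi_s}(A_s)$ to be large via the global Selmer ratio $c(\psi_s)$ (Corollary~\ref{cor:selmer-size}), while an independent degree-$3$ isogeny $\phi$ is used to bound the rank on average via Theorem~\ref{thm:selmer-average}, and the exact sequence \eqref{selmersequence} forces the surplus into $\Sha$. Two caveats on your sketch. First, the rank control in the paper does not come from root-number parity; it comes from the unconditional average of $|\Sel_{\phi_s}(A_s)|$ over twists proved in \cite{BKLOS}, which is essential because parity alone cannot give an upper bound on rank. Second, the reduction ``it suffices to treat $n=\ell$ prime'' is fine for producing \emph{some} element of prime order, but note that an element of order $\ell$ in $\Sha$ does not by itself yield an element of order $n$ for composite $n$; the paper's Theorem~\ref{thm:18-isogeny} has to run the $2$-descent and $3$-descent arguments simultaneously on a common positive-density set of twists to get order $6$, and this intersection step is exactly what fails for the isogeny class 14a. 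If you intend your writeup as a heuristic justification of the conjecture rather than a proof, it is reasonable; as a proof it does not (and cannot currently) close.
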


A more precise conjecture when $A$ is an elliptic curve has been formulated by Delaunay \cite{Delaunay}, but to date, there is no example of any abelian variety for which even the weaker Conjecture \ref{conj:weak-sha} is known to hold for any $n$ that is not a power of two.  When $n=2$, the only examples of $A$ for which Conjecture \ref{conj:weak-sha} has been established either require $A$ to be an elliptic curve over $\mathbb{Q}$ with full rational two-torsion or for $A$ to admit such a curve as an isogeny factor \cite{FengXiong,Smith,XiongZaharescu},  with Smith's work \cite{Smith} largely confirming the full $2$-power case of Delaunay's conjecture for such curves.
Away from $2$, little is known.  If $n=3,5,$ or $7$ and the elliptic curve $E/\mathbb{Q}$ has a rational $n$-torsion point, then $\Sha(E_s)[n] \neq 0$ for infinitely many twists \cite{BalogOno}, but this result falls far short of obtaining a positive proportion.  It is also known that the $3$- and $5$-parts of $\Sha$ can be arbitrarily large for curves over $\mathbb{Q}$ \cite{Cassels,Fisher}, and that the $p$-part can be arbitrarily large for curves over some number field depending on $p$ \cite{Kloosterman}.

The purpose of the paper at hand is to establish several cases of Conjecture \ref{conj:weak-sha} when $n$ is not a power of two and for abelian varieties other than elliptic curves over $\mathbb{Q}$.  
Our first theorem proves the existence of elliptic curves for which Conjecture \ref{conj:weak-sha} holds with $n=3$.

\begin{theorem}\label{thm:9-isogeny}
Suppose $E \to E'$ is a cyclic $9$-isogeny of elliptic curves over $\Q$.  Then either a positive proportion of the twists $E_s$ have rank $0$ and $|\Sha(E_s)[3]| \geq 9$, or a positive proportion of the twists $E'_s$ have rank $0$ and $|\Sha(E'_s)[3]| \geq 9$.  \end{theorem}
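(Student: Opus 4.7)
I would factor the cyclic $9$-isogeny $\varphi:E\to E'$ as a composition $E\xrightarrow{\varphi_1}E''\xrightarrow{\varphi_2}E'$ of two rational $3$-isogenies, and exploit the fact that the intermediate curve $E''$ then admits \emph{two independent} rational $3$-isogenies: $\hat\varphi_1:E''\to E$ and $\varphi_2:E''\to E'$. A direct computation on $9$-torsion shows that $\ker(\hat\varphi_1)$ and $\ker(\varphi_2)$ are distinct order-$3$ subgroups of $E''[3]$: if $P\in E[9]$ generates $\ker\varphi$ and $Q:=\varphi_1(P)$, then $\ker\varphi_2=\langle Q\rangle$ while $\ker\hat\varphi_1=\varphi_1(E[3])$, and one checks $Q\notin\varphi_1(E[3])$ by solving $(1-3a)P\in E[3]$ and finding no integer solution $a$. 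This supplies the ``independent isogenies'' input to the general method announced in the abstract.

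For each squarefree $s$, the plan is to consider the four $3$-isogeny Selmer groups attached to the twisted isogenies $\varphi_{1,s}$ and $\varphi_{2,s}$ and their duals:
$$\mathrm{Sel}_{\varphi_{1,s}}(E_s),\quad \mathrm{Sel}_{\hat\varphi_{1,s}}(E''_s),\quad \mathrm{Sel}_{\varphi_{2,s}}(E''_s),\quad \mathrm{Sel}_{\hat\varphi_{2,s}}(E'_s).$$
Standard $3$-isogeny descent sequences tie these together and control $\text{rank}(E_s)$, $\text{rank}(E'_s)$, and $\Sha[3]$ of each of $E_s, E''_s, E'_s$ simultaneously. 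Cassels-style formulas express each ratio $|\mathrm{Sel}_\phi|/|\mathrm{Sel}_{\hat\phi}|$ as a product of local factors at $3$, at primes of bad reduction of $E$, and at primes dividing $s$; the local factor at a prime $v$ depends only on the class of $s$ in $\mathbb{Q}_v^{\times}/\mathbb{Q}_v^{\times 2}$.

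The core step is a sieve over squarefree $s$: by imposing local conditions at the relevant finite set of primes, I would extract a positive proportion of twists with a prescribed compatible configuration of local Selmer ratios for both $\varphi_1$ and $\varphi_2$. The goal is to choose the configuration so that $\mathrm{Sel}_{\varphi_{1,s}}(E_s)=0=\mathrm{Sel}_{\hat\varphi_{2,s}}(E'_s)$, while $\mathrm{Sel}_{\hat\varphi_{1,s}}(E''_s)$ and $\mathrm{Sel}_{\varphi_{2,s}}(E''_s)$ are forced to be nontrivial. The descent sequences then yield $\text{rank}(E_s)=\text{rank}(E'_s)=0$ together with nontrivial classes in at least one of $\Sha(E_s)[3]$ or $\Sha(E'_s)[3]$. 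The bound $|\Sha[3]|\geq 9$ comes either from producing two independent classes via the two independent isogenies from $E''$, or by invoking the squareness of $|\Sha|$ under the alternating Cassels--Tate pairing once a single nontrivial class is in hand.

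The main obstacle is ensuring positive density for the \emph{joint} local condition on both isogenies: at a single bad prime, the local Selmer ratios for $\varphi_1$ and $\varphi_2$ are not independent of one another, and one must carefully understand their correlation to check that the desired compatible configuration is nonempty and is achieved by a positive proportion of $s$. A secondary difficulty is controlling torsion contributions, in particular arranging that $E_s(\mathbb{Q})[3]$ and $E'_s(\mathbb{Q})[3]$ are trivial for the generic $s$ in our family, so that the produced $3$-Selmer elements descend to genuine $\Sha$ classes rather than being absorbed into $E_s(\mathbb{Q})/3E_s(\mathbb{Q})$.
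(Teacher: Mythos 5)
Your decomposition of the cyclic $9$-isogeny, the observation that the intermediate curve carries two independent rational $3$-isogenies, and the verification that $\ker\hat\varphi_1$ and $\ker\varphi_2$ are independent in $E''[3]$ are all correct, and they match the paper's setup (the paper relabels so that $E$ is the middle curve and $\phi_1,\phi_2$ are the two independent $3$-isogenies out of it). The idea of expressing the ratio $|\mathrm{Sel}_\phi|/|\mathrm{Sel}_{\hat\phi}|$ as a product of local Selmer ratios and then sieving over $s$ by imposing local conditions is also the right framework, and your worry about the correlation between the local ratios of $\varphi_1$ and $\varphi_2$ at a common bad prime is a genuine issue that the paper addresses at length (Section~\ref{sec:two-isogenies}, using the Hesse model and Lemma~\ref{lem:9isog mult vals}).

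However, there is a fatal gap in the core step. You propose to ``choose the configuration so that $\mathrm{Sel}_{\varphi_{1,s}}(E_s)=0=\mathrm{Sel}_{\hat\varphi_{2,s}}(E'_s)$'' by imposing local conditions on $s$. Local conditions on $s$ control the global Selmer ratio $c(\phi_s)=|\mathrm{Sel}_{\phi_s}|/|\mathrm{Sel}_{\hat\phi_s}|$ (up to torsion), but they do not control either Selmer group individually, and in particular cannot force one to vanish: a small ratio $c(\phi_s)=3^{-m}$ is perfectly consistent with $\mathrm{Sel}_{\phi_s}$ being large, as long as $\mathrm{Sel}_{\hat\phi_s}$ is even larger. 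No amount of descent bookkeeping or Cassels-style product formulas fixes this. The missing ingredient, and the real engine of the paper's proof, is the geometry-of-numbers theorem on average sizes of $3$-isogeny Selmer groups from \cite{BKLOS} (Theorem~\ref{thm:selmer-average} in the paper): for $s$ ranging over the set $T_n(\phi)$ where $c(\phi_s)=3^n$, the average of $|\mathrm{Sel}_{\phi_s}(E_s)|$ equals exactly $1+3^n$, and this yields an upper bound on the average Mordell--Weil rank of the twists. That is what produces a positive proportion of rank-$0$ twists; nothing in your proposal substitutes for it. Once ranks are bounded, the lower bound on the dual Selmer group coming from $c(\psi_s)=3^{-m}$ translates into a lower bound on $\Sha$ via the exact sequence~\eqref{selmersequence}, and the bound $|\Sha[3]|\ge 9$ comes directly from the discrepancy $m-n\ge 2$, not from squareness of $\Sha$ (which would require additional hypotheses). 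A secondary point: the paper also must treat separately the finitely many exceptional curves (those with $u^2+uv+v^2\in\{1,3\}$ in the Hesse parametrization, i.e.\ twists of 27a1 and 54a1), where no auxiliary prime of multiplicative reduction is available and a direct computation is needed; your sieve would have nothing to sieve on in those cases.
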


While we enjoy the clean statement of Theorem \ref{thm:9-isogeny}, the method typically applies to both $E$ and $E^\prime$.  This is made clear by the quantitative version, Theorem \ref{lem:9-isog-sha} below. In fact, one consequence of Theorem \ref{lem:9-isog-sha} is that for any fixed number field $F$ and $r \ge 1$, almost all elliptic curves $E$ with a cyclic 9-isogeny defined over $F$ will have a positive proportion of twists $E_s$ with $|\Sha(E_s)[3]| \ge 9^r$.  For some of these curves, this positive proportion may in fact be taken to be a vast majority:

\begin{theorem}\label{thm:rigged-sha}
Let $F$ be a number field and let $r \geq 1$.  For any $\epsilon>0$, there are infinitely many elliptic curves $E/F$, not isomorphic over $\overline{F}$, for which a proportion at least $1-\epsilon$ of twists $E_s/F$ have $|\Sha(E_s)[3]| \geq 9^r$. 
\end{theorem}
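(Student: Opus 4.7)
The plan is to apply the quantitative version of Theorem \ref{thm:9-isogeny}, namely Theorem \ref{lem:9-isog-sha}, to a carefully chosen family of elliptic curves with a cyclic $9$-isogeny, selecting $E$ so that the lower bound on the proportion of twists $E_s$ with $|\Sha(E_s)[3]| \geq 9^r$ can be pushed arbitrarily close to $1$. Since $X_0(9)$ has genus zero with $X_0(9)(F) \cong \mathbb{P}^1(F)$ infinite, there are infinitely many $F$-rational points with pairwise distinct $j$-invariants, and each gives an elliptic curve $E/F$ equipped with a cyclic $9$-isogeny $\phi \colon E \to E'$. Restricting to distinct $j$-values produces an infinite collection of mutually $\overline{F}$-nonisomorphic such curves, which will become our pool from which to select the required $E$.

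Next, I would interpret Theorem \ref{lem:9-isog-sha} as quantifying how the sizes of $\Sel_\phi(E_s)$ and $\Sel_{\hat\phi}(E_s)$ vary with $s$. The local conditions defining these Selmer groups at a prime $\ell$ of bad reduction depend on the value of the quadratic character associated to $s$ at $\ell$, and the contributions at distinct primes behave in a nearly independent fashion. Thus, if $E$ has $N$ primes of suitably ``compatible'' bad reduction type, a random-walk style estimate should imply that a proportion $1 - O(c^{-N})$ of twists satisfy $|\Sel_\phi(E_s)| \geq 9^r$ (or the dual statement for $\hat\phi$); after subtracting the bounded Mordell--Weil contribution, a comparable proportion satisfies $|\Sha(E_s)[3]| \geq 9^r$. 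Curves $E$ with $N$ such primes are abundant, as the condition can be imposed by finitely many local constraints at auxiliary primes $\ell_1,\ldots,\ell_N$, carving out an infinite subset of $X_0(9)(F) \cong \mathbb{P}^1(F)$. Choosing $N = N(\epsilon,r)$ large enough would then yield the desired proportion $\geq 1-\epsilon$, and passing to infinitely many parameters on $X_0(9)$ producing this many compatible primes gives the infinite family required by the theorem.

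The main obstacle I expect is establishing the concentration estimate underlying this argument, namely showing that the lower bound on the proportion of good twists provided by Theorem \ref{lem:9-isog-sha} actually tends to $1$ (and not merely to a fixed positive constant) as the complexity of $E$ grows. This requires a careful accounting of the correlations between local Selmer conditions at distinct primes, and most likely a moment computation to rule out unexpected clustering of twists with small Selmer groups. Once such a concentration statement is in hand, its combination with the sieve on $X_0(9)(F)$ described above yields the theorem.
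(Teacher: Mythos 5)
Your proposal correctly identifies the overall shape of the argument — apply Theorem~\ref{lem:9-isog-sha} to a family of curves with cyclic $9$-isogenies coming from a genus-zero modular curve, using many primes of multiplicative reduction and a concentration-type estimate — and this is indeed the route the paper takes. But there is a genuine gap in the phrase ``after subtracting the bounded Mordell--Weil contribution.'' The Mordell--Weil contribution to $\mathrm{Sel}_\phi(E_s)$ is \emph{not} bounded a priori, and this is exactly the difficulty the construction must overcome. Showing $|\mathrm{Sel}_{\phi}(E_s)|\geq 9^r$ by itself says nothing about $\Sha(E_s)[3]$, because you cannot rule out that the Selmer group is accounted for by a large rank. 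The paper's mechanism is to decompose the cyclic $9$-isogeny as $E'\xrightarrow{\hat\phi_1}E\xrightarrow{\phi_2}E''$ and exploit the two $3$-isogenies $\phi_1,\phi_2$ out of $E$ \emph{asymmetrically}: $\phi_1$ is used to make $c(\phi_{1,s})$ very negative (hence $\mathrm{Sel}_{\hat\phi_{1,s}}$ large by Corollary~\ref{cor:selmer-size}), while $\phi_2$ is used through Theorem~\ref{thm:selmer-average} to \emph{bound the rank}, which requires $|\mathrm{ord}_3 c(\phi_{2,s})|$ to be small. Your proposal treats the single Selmer group $\mathrm{Sel}_\phi$ (and its dual) without this two-isogeny rank control, so the passage from ``Selmer large'' to ``$\Sha$ large'' does not go through.

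Connected to this is a second missing ingredient: to have both $c(\phi_{1,s})$ typically very negative \emph{and} $c(\phi_{2,s})$ typically close to zero, one needs $\#\mathcal{P}_1 \approx \#\mathcal{P}_2$, since $\mathrm{ord}_3 c(\phi_{1,s}) = -\omega_1^{\mathrm{sp}}-\omega_2^{\mathrm{sp}}+O(d)$ while $\mathrm{ord}_3 c(\phi_{2,s}) = \omega_1^{\mathrm{sp}}-\omega_2^{\mathrm{sp}}+O(d)$. If, say, $u-1$ has vastly more prime factors than $u^2+u+1$, the second quantity is typically large and the rank bound from Theorem~\ref{thm:selmer-average} becomes useless. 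The paper handles this via Lemma~\ref{lem:rigged-curves}, a lower-bound sieve (beta-sieve of dimension $2$ or $3$ on values of $u-1$ and $u^2+u+1$) producing infinitely many $u$ for which both factors have $\#\mathcal{P}_1,\#\mathcal{P}_2\in\{m,\ldots,m+r\}$ for a bounded $r$. Your ``finitely many local constraints at auxiliary primes'' can force each set to be \emph{at least} a given size, but not force them to be \emph{comparable}; the sieve upper bound is essential. With these two ingredients added — the two-isogeny split of roles, and the balancing sieve — the Chebyshev-style concentration you sketch does close the argument, though the rate is polynomial ($1-O(m^{-1/8})$ in the paper) rather than the exponential $1-O(c^{-N})$ you suggest, since the relevant random variable is a difference $\omega_1^{\mathrm{sp}}-\omega_2^{\mathrm{sp}}$ of Bernoulli sums that has standard deviation of order $\sqrt{m}$.
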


The ideas leading to Theorem \ref{thm:9-isogeny} also permit us to find elements of order $6$ in Tate-Shafarevich groups for a positive proportion of twists $E_s$ of $E$, provided that the elliptic curve $E$ has an additional bit of level structure.  For convenience, we state this result only over $\mathbb{Q}$, though a less uniform version should hold over any number field.

\begin{theorem}\label{thm:18-isogeny}
Suppose that $E/\mathbb{Q}$ has a cyclic $18$-isogeny.  Suppose also that $E$ is not a twist of a curve in the isogeny class of the curve $y^2+xy+y=x^3+4x-6$ with Cremona label {\rm 104a1}.  Then for a positive proportion of twists, $\Sha(E_s)$ has an element of order $6$.
\end{theorem}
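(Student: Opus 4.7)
The plan is to combine the quantitative form of Theorem \ref{thm:9-isogeny} with a classical $2$-isogeny descent on the twist family. A cyclic $18$-isogeny with source $E$ factors through both its $\Z/9\Z$- and its $\Z/2\Z$-subgroup of the kernel, so $E$ admits simultaneously a cyclic $9$-isogeny $\psi\colon E \to E'$ and a $2$-isogeny $\phi\colon E \to E^{\dagger}$ over $\Q$. Since $\Sha(E_s)$ contains an element of order $6$ if and only if both $\Sha(E_s)[2]$ and $\Sha(E_s)[3]$ are nonzero, it suffices to produce both simultaneously for a positive proportion of $s$.

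For the order-$3$ part I would invoke the quantitative form of Theorem \ref{thm:9-isogeny} applied to $\psi$: inspection of its proof should isolate the local conditions on $s$ that force the element of order $3$ to lie in $\Sha(E_s)$ (rather than only in $\Sha(E'_s)$), and should show that a positive proportion of $s$ satisfying these conditions yield twists $E_s$ of rank zero. For the order-$2$ part I would run the standard $\phi$-descent and its dual. The exact sequence
\[
0 \to E^{\dagger}_s(\Q)/\phi_s E_s(\Q) \to \mathrm{Sel}_{\phi_s}(E_s) \to \Sha(E_s)[\phi_s] \to 0
\]
reduces matters to producing an extra Selmer element beyond the contribution from rational points, and for a positive proportion of $s$ standard $\phi$-Selmer computations (as in \cite[\S X]{AEC}) produce such an element. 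The resulting condition is again a union of local congruence conditions on $s$ at the primes of bad reduction of $\phi$, together with the global rank-zero constraint already provided by the $9$-isogeny step.

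The heart of the argument, and the main obstacle, is simultaneity: I must show that the two sets of local conditions --- the one defining $\Sha(E_s)[3]\neq 0$ and the one defining $\Sha(E_s)[2]\neq 0$ --- can be imposed on a common positive proportion of $s$. The former involves mod-$3$ and mod-$9$ behavior of the Galois representation attached to $E$, while the latter involves mod-$2$ behavior; generically the two sets of conditions live at independent primes or independent residue classes, and a Chebotarev/sieve calculation in the spirit of the proofs of Theorems \ref{thm:9-isogeny}--\ref{thm:rigged-sha} should yield positive density for the intersection. The isogeny class of {\rm 104a1} is excluded precisely because there a global reciprocity constraint (of Cassels pairing type) correlates the two sets of conditions in such a way that the joint density vanishes. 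Showing that {\rm 104a1} is the \emph{only} such correlation reduces to a finite check involving the $\Q$-rational points on the modular curve $X_0(18)$, and this finite enumeration is where the argument requires the most care.
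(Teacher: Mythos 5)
Your high-level plan --- factor the $18$-isogeny as $9\times 2$, use the $9$-isogeny to force $\Sha[3]\neq 0$ and the $2$-isogeny to force $\Sha[2]\neq 0$, then intersect --- matches the paper's strategy, but the three places where you defer the hard work are each resolved quite differently from what you propose, and two of your suggested resolutions would not succeed.

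First, the simultaneity. You treat this as the main obstacle and suggest a Chebotarev/sieve argument based on the $2$- and $3$-conditions living at independent primes. The paper avoids independence entirely and uses a plain pigeonhole in densities. Within the fixed local class $T_{-3}(\phi_1)\cap T_{-1}(\phi_2)$ (where $\phi_1,\phi_2$ are the two independent $3$-isogenies out of the intermediate curve in the $9$-isogeny chain), Proposition \ref{lem:18-isogeny} shows the class is nonempty, and the proof of Proposition \ref{prop:sha-elements} shows that a proportion at least $5/6$ of these twists already have $|\Sha(E'_s)[3]|\geq 9$. Separately, Theorem \ref{thm:erdoskac} shows that for any fixed local class $\gamma$, at least $1/2$ of twists by $s\in\gamma$ have $|\Sel_2(E'_s)|\geq 2^{r_2}$, for any prescribed $r_2$. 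Since $5/6 + 1/2 > 1$, a proportion at least $1/3$ satisfy both; there is no independence hypothesis to verify and no sieve to run.

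Second, the $2$-part is not a "standard $\phi$-descent and its dual." The input required is a distributional (Erd\H{o}s--Kac type) theorem for $2$-Selmer ranks in twist families with a rational $2$-torsion point, due to Xiong and to Klagsbrun--Lemke Oliver, and this carries a genuine hypothesis: that $\Delta_E\Delta_{E^\dagger}$ is not a square, where $E^\dagger$ is the $2$-isogenous curve. Lemma \ref{lem:notsquare} verifies this hypothesis for every curve with a rational $6$-isogeny (and hence for every curve under consideration) via a rank-$0$ elliptic curve computation coming from the parametrization of $X_0(6)$; it does not come for free from a local descent at bad primes.

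Third, the exclusion of the isogeny class 14a is not a Cassels-pairing reciprocity phenomenon, nor does its identification require analyzing rational points on $X_0(18)$ in the way you suggest. The parametrization of $X_0(18)$ \emph{is} used, but to control the factorization of $\Delta_{E'}$ and $\Delta_{E''}$: Lemmas \ref{lem:18isogtype2primes}--\ref{lem:18isogtype3primes} and their corollaries produce enough primes of multiplicative reduction with prescribed $j$-invariant valuations to make $T_{-3}(\phi_1)\cap T_{-1}(\phi_2)$ nonempty, and the finite set of $(m,n)$ where these primes fail to exist is exactly the 14a class (up to twist). For 14a the best one can achieve (see Section \ref{sec:examples}) is a proportion of $1/2$ for the $\Sha[3]$ condition, and $1/2+1/2-1=0$, so the pigeonhole degenerates; the obstacle is a shortage of multiplicative primes, not a correlation between the $2$- and $3$-conditions. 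You would also need the $18$-isogeny parametrization for the $3$-torsion analysis itself, not only for the $2$-isogeny bookkeeping.
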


The isogeny class 14a is the subject of Section \ref{sec:examples}.  While our methods do not show that twists of curves in this isogeny class have elements of order six in their Tate-Shafarevich groups, we are nevertheless able to obtain strong applications regarding their Mordell-Weil ranks.  For example, we prove that at least $25\%$ of their quadratic twists have rank $0$, and at least $41.6\%$ have rank $1$ assuming finiteness of Tate-Shafarevich groups.

For each prime $p \geq 5$, we also provide examples of curves for which Conjecture \ref{conj:weak-sha} holds for $n=p$.  Before stating a general theorem, we recall a bit of notation.  The modular curve $X_0(3p)$ has four cusps that we may label $\mathfrak{c}_1$, $\mathfrak{c}_3$, $\mathfrak{c}_p$, and $\mathfrak{c}_{3p}$ according to their ramification degree over the single cusp of the curve $X(1)$.  Each of these cusps is rational, so if $\mathfrak{p} \nmid 3p$ is a prime ideal in the ring of integers $\mathcal{O}_F$ of some number field $F$, the reduction $\overline{\mathfrak{c}_i} \in X_0(3p)(\mathcal{O}_F/\mathfrak{p})$ is well defined.  With this notation, we have:

\begin{theorem}\label{thm:modular-curve}
Let $F$ be a number field of degree $d$ and suppose that $x \in X_0(3p)(F)$ is a non-cuspidal point.  For $i \in \{1,3,p,3p\}$, let $\omega_i$ denote the number of primes $\mathfrak{p} \nmid 3p$ for which the reduction of $\bar{x} \in X_0(3p)(\mathcal{O}_F/\p)$ is equal to the reduction  $\overline{\mathfrak{c}_i}$ of $\mathfrak{c}_i$.  Then there exists an elliptic curve $E/F$ with $j(E) = j(x)$ such that for a positive proportion of $s$ in the trivial squareclass of $\prod_{v\mid 3p\mathfrak{f}_E\infty} F_v/F_v^{*2}$, we have
\[
|\Sha(E_s/F)[p]| \geq p^{2\min(\omega_1,\omega_3) - 2d}.
\]
\end{theorem}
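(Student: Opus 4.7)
The non-cuspidal point $x \in X_0(3p)(F)$ provides an elliptic curve $E/F$ with $j(E) = j(x)$ equipped with a rational cyclic subgroup $C \subset E$ of order $3p$. The $p$- and $3$-parts of $C$ yield rational isogenies $\phi \colon E \to E'$ of degree $p$ and $\psi \colon E \to E''$ of degree $3$. The local trivial-squareclass constraint at $v \mid 3p\mathfrak{f}_E\infty$ forces $E_s \simeq E$ over each such $F_v$, so every $E_s$ in the target family inherits $\phi,\psi$ and has the same Kodaira type as $E$ at the places of its conductor. The plan is to use $\phi$ to produce many elements of $\Sha(E_s)[p]$ via Tamagawa-type growth at the primes where $x$ reduces to a cusp, and to use $\psi$ to bound the rank of $E_s(F)$ so that the resulting Selmer growth is not absorbed into the Mordell--Weil group.

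\textbf{Isogeny Selmer bounds from cusp reductions.} At each prime $\mathfrak{p} \nmid 3p\mathfrak{f}_E$ with $\bar{x} = \overline{\mathfrak{c}_i}$, the curve $E$ has multiplicative reduction at $\mathfrak{p}$ and the label $i$ records how $C$ meets the toric and \'{e}tale parts of the Tate uniformization. The $p$-part of $C$ is toric precisely at $\mathfrak{c}_1$ and $\mathfrak{c}_3$, and at such primes a direct Tate-curve calculation gives $c_\phi(\mathfrak{p})/|E_s(F_\mathfrak{p})[\phi]| = p$, where $c_\phi(\mathfrak{p}) := [E'_s(F_\mathfrak{p}) : \phi(E_s(F_\mathfrak{p}))]$. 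Feeding this into Schaefer's comparison formula
\[
\frac{|\mathrm{Sel}_\phi(E_s)|}{|\mathrm{Sel}_{\hat\phi}(E'_s)|} \; = \; \frac{|E'_s(F)[\hat\phi]|}{|E_s(F)[\phi]|} \prod_v \frac{c_\phi(v)}{|E_s(F_v)[\phi]|}
\]
produces a lower bound on $|\mathrm{Sel}_\phi(E_s)|$ that grows with $\omega_1 + \omega_3$, with a loss of at most $p^d$ from the archimedean and bad-conductor places. A parallel analysis with $\phi$ replaced by $\hat\phi$ (where the toric/\'{e}tale roles swap) yields a symmetric lower bound on the size of $|\mathrm{Sel}_{\hat\phi}(E'_s)|$.

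\textbf{Passing to $\Sha(E_s)[p]$ and the main obstacle.} The Cassels--Tate pairing $\Sha(E_s)[\phi] \times \Sha(E'_s)[\hat\phi] \to \mathbb{Q}/\mathbb{Z}$, non-degenerate modulo maximal divisible subgroups, together with the exact sequence $\Sha(E_s)[\phi] \hookrightarrow \Sha(E_s)[p] \to \Sha(E'_s)[\hat\phi]$, translates matched Selmer contributions on both sides of the pairing into elements of $\Sha(E_s)[p]$. Because matched contributions at the $\omega_1$- and $\omega_3$-reducing primes appear on both sides of the pairing, one obtains $|\Sha(E_s)[p]| \geq p^{2\min(\omega_1,\omega_3) - 2d}$: the factor $2$ doubles the common Selmer growth that survives through the pairing, and the $-2d$ absorbs the archimedean/bad-prime losses on each side. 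To keep the Selmer growth from being absorbed by $E_s(F)/pE_s(F)$, we invoke the $3$-isogeny $\psi$: the $3$-Selmer distribution techniques developed in the proofs of Theorems~\ref{thm:9-isogeny} and \ref{lem:9-isog-sha} furnish a positive proportion of $s$ in the target local class for which $\dim_{\mathbb{F}_3}\mathrm{Sel}_3(E_s)$, and hence the rank of $E_s(F)$, is bounded. The principal technical obstacle is the balanced pairing analysis: one must show that the $\omega_1$- and $\omega_3$-prime contributions pair non-trivially under Cassels--Tate on a subspace of the claimed dimension, which is a delicate Poitou--Tate computation, and dovetail this with the density statement guaranteeing simultaneous rank control.
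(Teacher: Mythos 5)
Your overall architecture (one isogeny for Selmer growth, the other for rank control, then feed into the Selmer/$\Sha$ exact sequence) is the right one, and your reading of the cusps $\mathfrak{c}_1,\mathfrak{c}_3$ as the ones where the $p$-part of the cyclic subgroup is toric matches the Fricke-involution computation in the paper. However, the central step of your proposal contains a genuine error.

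The factor of $2$ in the exponent does \emph{not} come from a Cassels--Tate pairing argument, and the version you describe cannot work. You claim that a ``parallel analysis with $\phi$ replaced by $\hat\phi$'' gives a symmetric lower bound on $|\mathrm{Sel}_{\hat\phi}(E'_s)|$, so that both sides of the pairing are large. But the only tool available here is the global Selmer ratio $c(\phi_s) = |\mathrm{Sel}_{\phi_s}|/|\mathrm{Sel}_{\hat\phi_s}|$ (Proposition~\ref{prop:selmer-ratio}); it measures the \emph{discrepancy} between the two Selmer groups, not a common floor for both. Making the $\omega_1$- and $\omega_3$-primes split multiplicative forces $c_\p(\psi_s)=p$ at all those places, so $c(\psi_s)$ is large and $|\mathrm{Sel}_{\psi_s}|$ is large --- but exactly because of this, $c(\hat\psi_s)=c(\psi_s)^{-1}$ is small, and one has no lower bound at all on $|\mathrm{Sel}_{\hat\psi_s}|$; it may well be trivial. (Equivalently: at each such $\p$ the toric/\'etale roles genuinely swap when you pass to $\hat\psi$, which makes $c_\p(\hat\psi_s)=1/p$, i.e.\ the parallel analysis yields an \emph{upper} bound for the dual Selmer group, not a lower one.) So the ``balanced pairing on a subspace of the claimed dimension'' is not available.

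What actually produces $2\min(\omega_1,\omega_3)$ is a combinatorial cancellation between the two isogenies, not duality. At a prime in the $\mathfrak{c}_1$ class one has $c_\p(\phi_s)=3$ and $c_\p(\psi_s)=p$, while at a $\mathfrak{c}_3$ prime one has $c_\p(\phi_s)=1/3$ and $c_\p(\psi_s)=p$. Hence
\[
\mathrm{ord}_p\, c(\psi_s)\ \approx\ \omega_1+\omega_3,
\qquad
\bigl|\mathrm{ord}_3\, c(\phi_s)\bigr|\ \approx\ |\omega_1-\omega_3|,
\]
up to $O(d)$ corrections at $\p\mid 3p\infty$, and the final exponent is their difference
\[
(\omega_1+\omega_3)-|\omega_1-\omega_3|-O(d)=2\min(\omega_1,\omega_3)-O(d),
\]
fed into Proposition~\ref{prop:sha-elements}. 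This also highlights a second, related gap in your write-up: it is not enough to say ``the $3$-isogeny bounds the rank.'' The rank bound coming from Theorem~\ref{thm:selmer-average} scales like $|\mathrm{ord}_3\, c(\phi_s)|\approx|\omega_1-\omega_3|$, and this $\omega$-dependence is essential; only by subtracting it from $\omega_1+\omega_3$ do you get the $\min$. A rank bound independent of $\omega_1,\omega_3$ would not deliver the stated inequality.

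If you replace the Cassels--Tate step by the above bookkeeping of local Selmer ratios for both $\phi$ and $\psi$ --- using that the two isogenies pull in the same direction at $\mathfrak{c}_1$ and in opposite directions at $\mathfrak{c}_3$ --- and then invoke Proposition~\ref{prop:sha-elements} with $m=\mathrm{ord}_p c(\psi_s)$ and $n=\mathrm{ord}_3 c(\phi_s)$, your argument aligns with the paper's proof.
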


We present several corollaries to Theorem \ref{thm:modular-curve}, the first of which simply shows that the set of curves for which Theorem \ref{thm:modular-curve} produces a non-trivial result is not empty.

\begin{corollary}\label{cor:p-sha}
Let $p \geq 5$ be a prime and let $E/\mathbb{Q}$ be an elliptic curve.  Let $r \geq 1$ and suppose that $E$ has multiplicative reduction at $\ell \nmid 3p$ for at least $4p+4+r$ primes $\ell$.  Also suppose that $\Gal(\bar \Q/\Q)$ acts transitively on the set of $\F_\ell$-lines in $E[\ell]$, for each $\ell \in \{3,p\}$.  Then there exists a number field $F$ of degree at most $4p+4$ over which $|\Sha(E_s)[p]| \geq p^{2r}$ for a positive proportion of $s \in F^* / F^{* 2}$.  
\end{corollary}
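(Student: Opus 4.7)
The strategy is to apply Theorem~\ref{thm:modular-curve} to a carefully chosen non-cuspidal point $x \in X_0(3p)(F)$. I would first construct $F$: by the transitivity hypotheses, $\Gal(\bar\Q/\Q)$ has an index-$4$ subgroup fixing an $\F_3$-line in $E[3]$ (giving a field of degree $4$ over which $E$ has a cyclic $3$-isogeny) and an index-$(p+1)$ subgroup fixing an $\F_p$-line in $E[p]$ (giving a field of degree $p+1$ carrying a cyclic $p$-isogeny). Their compositum $F$ satisfies $[F:\Q] \leq 4(p+1) = 4p+4$, and over $F$ the composition of the $3$- and $p$-isogenies yields a cyclic $3p$-isogeny of $E$, hence a non-cuspidal point $x \in X_0(3p)(F)$ with $j(x) = j(E)$.

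Next, I would count cusp reductions. At each of the $4p+4+r$ rational primes $\ell \nmid 3p$ of multiplicative reduction of $E$, the $j$-invariant has a pole, so $\bar x$ is cuspidal at every prime $\mathfrak{p}$ of $F$ above $\ell$. Since each rational prime contributes at least one prime of $F$, we obtain $\omega_1 + \omega_3 + \omega_p + \omega_{3p} \geq 4p + 4 + r$.

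The main obstacle --- and the crux of the argument --- is to arrange $\min(\omega_1, \omega_3) \geq r + [F:\Q]$, which is precisely the input Theorem~\ref{thm:modular-curve} requires to give $|\Sha(E_s)[p]| \geq p^{2r}$. Here I would use the Atkin-Lehner involutions $w_3, w_p, w_{3p}$ on $X_0(3p)$, which form a Klein four group permuting the four cusps simply transitively. Applying an Atkin-Lehner involution to $x$ replaces $(E, C)$ by an $F$-isogenous pair $(\tilde E, \tilde C)$, and permutes the $\omega_i$ accordingly. Combined with a local analysis of how the toric/\'etale filtration of the Tate curve at each multiplicative prime constrains which cusp the specialization of $x$ lands on, a judicious choice of representative within the Atkin-Lehner orbit of $x$ yields $\omega_1, \omega_3 \geq r + [F:\Q]$ for some $\tilde E$ isogenous to $E$ over $F$. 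The resulting lower bound on $|\Sha(\tilde E_s)[p]|$ then transfers to $|\Sha(E_s)[p]|$ via the standard comparison of $p$-Selmer groups under an $F$-rational isogeny of $3$- and $p$-power degree; the ``plus $r$'' slack in the hypothesis absorbs the bounded loss coming from this comparison, after which Theorem~\ref{thm:modular-curve} completes the proof.
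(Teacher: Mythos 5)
Your setup — taking $F$ to be the compositum of the degree-$4$ and degree-$(p+1)$ fields carrying a $3$- and $p$-isogeny, recognizing Theorem~\ref{thm:modular-curve} as the right tool, and isolating the bound $\min(\omega_1,\omega_3)\geq r+[F:\Q]$ as the crux — is exactly right. But your cusp count is too weak, and the Atkin--Lehner device cannot repair it. You only establish $\omega_1+\omega_3+\omega_p+\omega_{3p}\geq 4p+4+r$ (one prime of $F$ per multiplicative $\ell$), and the Klein four group carries the unordered pair $\{\mathfrak{c}_1,\mathfrak{c}_3\}$ only to itself or to $\{\mathfrak{c}_p,\mathfrak{c}_{3p}\}$. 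So the best a ``judicious choice in the Atkin--Lehner orbit'' can give you is $\max\bigl(\min(\omega_1,\omega_3),\,\min(\omega_p,\omega_{3p})\bigr)$, which is uncontrolled by the sum alone: if all the cuspidal reductions concentrated on $\mathfrak{c}_1$, both minima would vanish. No permutation of the four coordinates turns a lopsided vector into a balanced one.

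The missing input — which you gesture at under ``local analysis of the toric/\'etale filtration'' but do not carry out, and which is actually the whole argument — is that the transitivity hypothesis on $E[3]$ and $E[p]$ forces each multiplicative prime $\ell$ to factor in $F$ as $\ell\mathcal{O}_F = \mathfrak{l}_1\mathfrak{l}_3\mathfrak{l}_p\mathfrak{l}_{3p}$, with $\bar{x}$ reducing to $\overline{\mathfrak{c}_i}$ at $\mathfrak{l}_i$. Thus each multiplicative $\ell$ contributes $+1$ to \emph{every} $\omega_i$, giving $\omega_i\geq 4p+4+r$ for all $i$ simultaneously, and in particular $\min(\omega_1,\omega_3)-d\geq(4p+4+r)-(4p+4)=r$. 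Once you have this, Atkin--Lehner involutions play no role at all: you apply Theorem~\ref{thm:modular-curve} directly to $x$, not to a translate $wx$, and the isogeny-transfer step (which would cost you a bounded but $s$-dependent correction in $\dim_{\mathbb{F}_p}\Sha$ that the ``$+r$ slack'' has no room to absorb, since that slack is already spent making $\min(\omega_1,\omega_3)-d\geq r$) disappears from the argument.
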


The constants in Theorem \ref{thm:modular-curve} are generally not optimal, and using the ideas behind the proof, we find the first example of an elliptic curve over $\mathbb{Q}$ for which a positive proportion of twists have an element of order $5$ in the associated Tate-Shafarevich group.

\begin{corollary}\label{cor:5-sha}
Let $E\colon y^2 + xy + y = x^3 + x^2 - 13x - 219$ be the elliptic curve with Cremona label $50b3$.  For at least $50\%$ of positive squarefree $s\equiv 1\pmod{8}$ that are coprime to $5$, $E_s(\mathbb{Q})$ has rank $0$ and $|\Sha(E_s)[5]| \geq 25$.  
The same result holds for the elliptic curve with Cremona label $50b4$.
\end{corollary}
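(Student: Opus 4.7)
The plan is to specialize the method underlying Theorem \ref{thm:modular-curve} to the explicit curve $E$ of Cremona label 50b3 and its $5$-isogenous companion $E^\prime$ of label 50b4. The isogeny class 50b contains a cyclic $15$-isogeny defined over $\mathbb{Q}$, so $E$ corresponds to a non-cuspidal rational point $x \in X_0(15)(\mathbb{Q})$. I would factor the $15$-isogeny as the composition of a $3$-isogeny $\psi\colon E \to E^{\prime\prime}$ with the $5$-isogeny $\phi\colon E \to E^\prime$, and then run the two isogeny arguments in parallel: $\psi$ is used to bound the $\psi$-Selmer group (and hence the Mordell-Weil rank) of $E_s$ from above, while $\phi$ and its dual $\hat\phi$ are used to bound the $\phi$- and $\hat\phi$-Selmer groups of $E_s$ and $E_s^\prime$ from below.

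The restriction to positive squarefree $s \equiv 1 \pmod{8}$ with $\gcd(s,5)=1$ fixes the local squareclass of the twist at every bad prime of $E$ (namely $2$ and $5$) and at the archimedean place, so among such $s$ all variation of the Selmer structure occurs at primes of good reduction of $E$. Carrying out the explicit local Selmer ratio computations at $2$, $5$, and $\infty$ for this squareclass should show that for $100\%$ of such twists, $|\Sel_\phi(E_s)| \geq 5$ and $|\Sel_{\hat\phi}(E^\prime_s)| \geq 5$, reflecting the ``$2\min(\omega_1,\omega_3)$'' mechanism that drives Theorem \ref{thm:modular-curve}. The $\psi$-Selmer argument, in the spirit of Theorem \ref{thm:9-isogeny}, then shows that for at least $50\%$ of such $s$ one has $\Sel_\psi(E_s) = E_s(\mathbb{Q})[\psi]$, so $\mathrm{rk}\, E_s(\mathbb{Q}) = 0$; at that point the classes constructed in $\Sel_\phi(E_s)$ must lie in $\Sha(E_s)[\phi] \subseteq \Sha(E_s)[5]$, and combining with the dual contribution through the standard exact sequence relating $\Sha[\phi]$, $\Sha[\hat\phi]$, and $\Sha[5]$ yields $|\Sha(E_s)[5]| \geq 25$.

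The main obstacle is obtaining the sharp $50\%$ figure rather than merely a positive proportion. This requires a careful distributional analysis of $\Sel_\psi(E_s)$ over these restricted twists: one must compute the exact local Selmer ratios at $2$ and $5$ for the chosen squareclass, verify that the contributions from good primes behave as in a standard random model, and then apply the equidistribution result for isogeny Selmer ranks in quadratic twist families to conclude that $\Sel_\psi(E_s)$ attains its minimum value with density exactly $\tfrac{1}{2}$. The claim for 50b4 is essentially symmetric: $E^\prime$ also admits a cyclic $15$-isogeny via its dual $3$-isogeny and $\hat\phi$, so the entire argument runs in parallel with the roles of $\phi$ and $\hat\phi$ exchanged.
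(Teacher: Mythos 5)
Your overall strategy is the right one and matches the paper: restrict to the squareclass that fixes the local conditions at $2$, $5$, and $\infty$, use the $3$-isogeny together with Theorem~\ref{thm:selmer-average} to force rank $0$ for half the twists, and use the $5$-isogeny together with Corollary~\ref{cor:selmer-size} to force the Selmer group to be large.  But there are a few concrete errors that need fixing.

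First, Theorem~\ref{thm:modular-curve} does not give anything here as literally stated: the curve $50b3$ has conductor $50 = 2\cdot 5^2$, so away from $15$ there is exactly one prime of multiplicative reduction, hence $\min(\omega_1,\omega_3)=0$ and the bound $p^{2\min(\omega_1,\omega_3)-2d}$ is trivial.  The actual exponent of $5^2 = 25$ comes from two \emph{different} sources: one factor of $5$ at the multiplicative prime $2$ (the $\omega$-mechanism), but the other factor of $5$ comes from the additive place $p=5$, where $E$ has Kodaira type $\IIS$ and the formal-group factor satisfies $\alpha_{\psi,\Q_5}=5$ by comparing minimal discriminants.  If you organize the argument around the $\omega$-mechanism alone, you will only get one factor of $5$.

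Second, the phrasing ``$|\Sel_\phi(E_s)|\ge 5$ and $|\Sel_{\hat\phi}(E'_s)|\ge 5$, then combine via the exact sequence relating $\Sha[\phi]$, $\Sha[\hat\phi]$, $\Sha[5]$'' does not work.  The Selmer ratio $c(\psi_s)=25$ gives $|\Sel_{\psi_s}(E_s)|/|\Sel_{\hat\psi_s}(E''_s)|=25$, which together with $|\Sel_{\hat\psi_s}|\ge 1$ yields $|\Sel_{\psi_s}(E_s)|\ge 25$ outright; there is no splitting into two factors of $5$.  Moreover, the exact sequence $0\to\Sha(E)[\psi]\to\Sha(E)[5]\to\Sha(E'')[\hat\psi]$ has a map on the right that need not be surjective, so knowing $|\Sha(E)[\psi]|\ge 5$ and $|\Sha(E'')[\hat\psi]|\ge 5$ does \emph{not} by itself give $|\Sha(E)[5]|\ge 25$.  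The correct and simpler route is: rank $0$, no $5$-torsion, hence $\Sel_{\psi_s}(E_s)\cong\Sha(E_s)[\psi_s]\subseteq\Sha(E_s)[5]$ and $|\Sha(E_s)[5]|\ge 25$ directly.

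Third, the $50\%$ does not require an equidistribution theorem for Selmer ranks.  Since $c(\phi_s)=1$ for these $s$, Theorem~\ref{thm:selmer-average} gives $\mathrm{Avg}\,|\Sel_{\phi_s}(E_s)| = 2$, and because $|\Sel_{\phi_s}|$ is a power of $3$ that is at least $1$, at least half of the $s$ must have $|\Sel_{\phi_s}(E_s)|=1$.  Since $c(\phi_s)=1$ also forces $|\Sel_{\hat\phi_s}(E'_s)|=1$ for those $s$, the rank is $0$.

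Finally, a small labeling slip: $50b4$ is the $3$-isogenous companion of $50b3$; the $5$-isogenous companion is $50b1$.
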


Corollary \ref{cor:p-sha} shows that for any $r \geq 1$, there exist some number field over which some elliptic curve $E$ has $|\Sha(E_s)[p]| \geq p^{2r}$ for a positive proportion of its twists.  The final corollary we present shows that, when $p=5$ or $7$, there exist number fields over which for any $r \geq 1$ there are elliptic curves $E$ for which $|\Sha(E_s)[p]| \geq p^{2r}$ for a positive proportion of the quadratic twists of $E$.

\begin{corollary}\label{cor:15-21}
Let $p\in\{5,7\}$ and let $r\geq 1$ be an integer.  Let $F$ be a number field over which the modular curve $X_0(3p)$ has infinitely many points over $F$.  Then there are infinitely many elliptic curves over $F$, not isomorphic over $\bar{F}$, such that $|\Sha(E_s)[p]| \geq p^{2r}$ for a positive proportion of $s \in F^* / F^{*2}$.
\end{corollary}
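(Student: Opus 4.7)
The plan is to derive the corollary from Theorem \ref{thm:modular-curve} by producing, for each $r \geq 1$, an infinite sequence of non-cuspidal points $x_n \in X_0(3p)(F)$ with pairwise distinct $j$-invariants satisfying $\min(\omega_1(x_n), \omega_3(x_n)) \geq d + r$. The starting point is that for $p \in \{5,7\}$ the curve $X := X_0(3p)$ has genus one (indeed $X$ is $15a$ or $21a$); together with the hypothesis that $X(F)$ is infinite, this lets me fix the $F$-rational cusp $\mathfrak{c}_1$ as origin and regard $X$ as an elliptic curve over $F$ of positive Mordell-Weil rank. By Manin-Drinfeld, the remaining cusps $\mathfrak{c}_3, \mathfrak{c}_p, \mathfrak{c}_{3p}$ are then torsion points in $X(F)$.

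I next choose a point $P \in X(F)$ of infinite order and set $x_n := nP$. Since $j \colon X \to \mathbb{P}^1$ is non-constant, the $j(x_n)$ attain infinitely many distinct values, so the elliptic curves that Theorem \ref{thm:modular-curve} attaches to the $x_n$ are automatically pairwise non-isomorphic over $\bar F$. The corollary therefore reduces to the following \emph{key claim}: for every integer $N \geq 1$, there are infinitely many $n$ with $\omega_1(x_n) \geq N$ and $\omega_3(x_n) \geq N$ simultaneously.

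A prime $\mathfrak{p} \nmid 3p$ contributes to $\omega_i(x_n)$ precisely when $nP - \mathfrak{c}_i$ lies in the kernel of reduction at $\mathfrak{p}$. For $i = 1$ this is governed by the classical elliptic divisibility sequence of $P$, and Silverman's primitive prime divisor theorem, combined with the strong-divisibility property, shows that for $n$ sufficiently composite (say, $n = m!$ with $m \to \infty$) the number of distinct prime divisors $\omega(B_n)$ tends to infinity, because each divisor $k \mid n$ beyond Silverman's threshold contributes at least one new prime. For $i = 3$ the sequence $n \mapsto nP - \mathfrak{c}_3$ is not a classical elliptic divisibility sequence, but it is an arithmetic progression on $X$ of canonical height $\asymp n^2$, so a parallel height/primitive-divisor argument still forces $\omega_3(x_n) \to \infty$ along analogous subsequences. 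The main obstacle---and the step I expect to require the most care---is ensuring that both $\omega_1(x_n) \geq N$ and $\omega_3(x_n) \geq N$ hold for the same $n$; my plan is to choose $n$ highly composite so that $\omega_1(x_n)$ automatically inflates, and then argue by counting or averaging that the subset of such $n$ failing $\omega_3(x_n) \geq N$ is too sparse to exhaust all highly composite $n$.

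Granting the key claim with $N = d + r$, Theorem \ref{thm:modular-curve} applied to each qualifying $x_n$ yields an elliptic curve $E_n/F$ with $j(E_n) = j(x_n)$ for which $|\Sha((E_n)_s/F)[p]| \geq p^{2(d+r) - 2d} = p^{2r}$ holds for a positive proportion of $s$ in the trivial squareclass of $\prod_{v \mid 3p \mathfrak{f}_{E_n}\infty} F_v/F_v^{*2}$. Since this trivial local squareclass pulls back to a subgroup of $F^*/F^{*2}$ of positive natural density under the standard height ordering, one obtains the desired positive proportion of $s \in F^*/F^{*2}$, and the $E_n$ are pairwise non-isomorphic over $\bar F$ by construction of the $x_n$.
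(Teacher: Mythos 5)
Your overall strategy is the same one the paper uses — identify $X_0(3p)$ with an elliptic curve via the cusp $\mathfrak{c}_1$, invoke Manin–Drinfeld to see the other cusps are torsion, and then use positive rank to produce non-cuspidal $F$-points that reduce to cusps modulo many primes — but the crucial step of \emph{simultaneously} controlling $\omega_1$ and $\omega_3$ is where your argument has a genuine gap. You explicitly flag this yourself ("the main obstacle... ensuring that both... hold for the same $n$"), and the "counting or averaging" sketch you offer does not actually establish it. There are two distinct problems. First, your growth of $\omega_1(x_n)$ via Silverman's primitive divisor theorem plus strong divisibility for $n = m!$ is sound, but the analogous claim for $\omega_3(x_n)$ is not: the sequence $n\mapsto nP - \mathfrak{c}_3$ is not an elliptic divisibility sequence, has no strong-divisibility property, and the "parallel height/primitive-divisor argument" you gesture at is not a theorem you can cite (primitive divisor results for translated sequences exist but are substantially harder, and even those give one new prime per index, not a guarantee that a \emph{fixed} index like $n = m!$ has many such primes). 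Second, even if both $\omega_1(x_n)$ and $\omega_3(x_n)$ tended to infinity along separate subsequences, you would still need to show they are large for a common infinite set of $n$, and nothing in the proposal does this.

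The paper's Lemma \ref{lem:ec-divis} sidesteps all of this by reversing the quantifiers: instead of trying to show a fixed point $nP$ reduces to $\mathcal{O}$ (resp.\ $T$) at many primes, it \emph{first} chooses auxiliary multipliers $\ell_1,\dots,\ell_w$ (odd primes $\equiv 1 \pmod{|T|}$, large enough that each $\ell_i P$ and $\ell_i P - T$ acquires a new prime in its denominator, which is possible because $E$ has only finitely many $S$-integral points), then fixes the resulting primes $\mathfrak{p}_i, \mathfrak{q}_j$, and finally observes that \emph{every} point of the form $n\ell P$ with $n\equiv 1\pmod{|T|}$, $\ell = \ell_1\cdots\ell_w$, reduces to $\mathcal{O}$ mod each $\mathfrak{p}_i$ and to $T$ mod each $\mathfrak{q}_j$ simultaneously. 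This uses only the group law modulo each prime and requires no primitive-divisor technology. I would suggest replacing your key claim with this construction: it is shorter, fully rigorous, and is precisely what makes the "same $n$" problem disappear.
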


For example, $X_0(15)$ and $X_0(21)$ both have infinitely many points over the field $\mathbb{Q}(\sqrt{10})$, so each case of Corollary \ref{cor:15-21} applies.  The curves produced by Corollary \ref{cor:15-21} are explicit, in the sense that they can be generated easily in \verb^Magma^.  Moreover, the proof suggests that when the points of $X_0(3p)(F)$ are ordered in a natural way, the conclusion of Corollary \ref{cor:15-21} holds for $100\%$ of curves over $F$ with a degree $3p$ isogeny; see Remark \ref{rem:100}.  

As we explain at the end of this section, the proofs of the above results all make use of primes of multiplicative reduction to force a certain Selmer group to be large.  In particular, this method does not apply to elliptic curves with (potentially) everywhere good reduction.  The next result provides examples of curves with (potentially) everywhere good reduction for which Conjecture~\ref{conj:weak-sha} holds, though as in Corollary \ref{cor:p-sha}, we must base-change to a larger number field to find them.  This second approach, which will also be elaborated on at the end of this section, works for any prime $p\geq 5$:
  
\begin{theorem}\label{large primes}
Let $p \geq 5$ be a prime and $E/\Q$ an elliptic curve with potentially good and ordinary reduction at both $3$ and $p$.  Assume that $\Gal(\bar \Q/\Q)$ acts transitively on the set of $\F_\ell$-lines in $E[\ell]$, for $\ell \in \{3, p\}$, and set $K = \Q(E[3p])$.  Let $E'$ be any elliptic curve over $K$ that is $p$-isogenous to $E$.  Then for a positive proportion of $s \in K^*/K^{*2}$, we have 
\[\dim_{\F_p} \Sha(E'_s)[p] \ge \dfrac d2 \left(1 - \frac{4}{p+1}\right),\] where $[K:\Q] = 2d$.
\end{theorem}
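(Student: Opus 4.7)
The plan is to carry out a $\phi$-descent on the family of quadratic twists of $E'/K$, in the spirit of Theorems \ref{thm:9-isogeny}--\ref{thm:modular-curve}, but now exploiting the arithmetic of $K = \Q(E[3p])$ in place of primes of multiplicative reduction over $\Q$. A preliminary observation is that $E/K$ and $E'/K$ have good reduction at every finite place: since $K$ contains $E[\ell]$ for both $\ell = 3$ and $\ell = p$, and $3 \ne p$, N\'eron-Ogg-Shafarevich (together with the potentially good hypothesis) yields good reduction at every $v$. Moreover, $\phi: E \to E'$ and its dual $\hat\phi$ are both defined over $K$ with $K$-rational cyclic kernels of order $p$ --- for $\hat\phi$ one uses $\mu_p \subset K$ via the Weil pairing.

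The analytical heart is the Cassels-Tate/Greenberg-Wiles ratio identity
\[
\frac{|\Sel_{\hat\phi}(E'_s/K)|}{|\Sel_\phi(E_s/K)|} \;=\; \frac{|E'_s[\hat\phi](K)|}{|E_s[\phi](K)|} \cdot \prod_v c_v(\phi_s),
\]
where $c_v(\phi_s) = |E'_s(K_v)/\phi_s(E_s(K_v))| \cdot |E_s[\phi](K_v)|^{-1}$. Good reduction of $E/K$ implies $c_v(\phi_s) = 1$ at every $v \nmid 2p\infty$ with $v(s)$ even, so we focus on places $v \nmid 2p\infty$ with $v(s)$ odd, at which $E_s$ has additive reduction. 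A direct local calculation will show that at such a place $c_v(\phi_s) \in \{1,p\}$, with $c_v = p$ precisely when the image of $\mathrm{Frob}_v$ in $\Gal(K/\Q) \hookrightarrow \GL_2(\F_3) \times \GL_2(\F_p)$, acting on the $p+1$ lines of $E[p]$, avoids an explicit bad configuration involving $\ker\phi$ and the fixed lines of Frobenius. A Chebotarev density argument --- using the transitivity of the mod-$p$ Galois action on lines, which guarantees the image in $\GL_2(\F_p)$ is large enough to equidistribute --- will show that the bad configuration covers exactly $4$ of the $p+1$ lines, giving density $(p-3)/(p+1)$ of favorable rational primes, contributing $\sim d(p-3)/(p+1)$ favorable places of $K$ (up to a Chebotarev splitting factor absorbed into the ratio $[K:\Q]/2 = d$).

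Applying a standard twist-counting sieve (as in the earlier sections of the paper) then shows that for a positive proportion of $s \in K^*/K^{*2}$, the valuation $v(s)$ is odd at at least $\tfrac{d(p-3)}{p+1}$ favorable places and even at all remaining places, so that $\prod_v c_v(\phi_s) \geq p^{d(p-3)/(p+1)}$. Combined with a Mordell-Weil bound on $\dim_{\F_p} E_s(K)/\hat\phi E'_s(K)$ --- obtained on a positive-density subfamily by a rank/root-number argument --- the identity above forces $\dim_{\F_p} \Sel_{\hat\phi}(E'_s/K)$ to exceed that bound by at least $\frac{d(p-3)}{2(p+1)}$, yielding
\[
\dim_{\F_p}\Sha(E'_s)[p] \;\geq\; \dim_{\F_p}\Sha(E'_s)[\hat\phi] \;\geq\; \frac{d}{2}\left(1 - \frac{4}{p+1}\right).
\]

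The principal obstacle will be the combined local Selmer / Chebotarev calculation: pinning down the $4$ bad lines in the action of $\mathrm{Frob}_v$ on $E[p]$ precisely, and proving the resulting density is insensitive to the choice of cyclic $p$-isogeny $\phi$. A secondary difficulty is Mordell-Weil control, since over $K$ we cannot directly invoke the unconditional rank bounds available over $\Q$; a suitable variant for the twist family $E_s/K$, perhaps using root-number averaging combined with a simultaneous descent on $\Sel_\phi(E_s/K)$, will need to be established to ensure the rank contribution does not absorb the $\Sha$ growth. Finally, one must verify that the Chebotarev-favorable subfamily of $s$ from the sieve and the rank-controlled subfamily overlap with positive density, which should follow from an independence argument since the two conditions are imposed at disjoint sets of places.
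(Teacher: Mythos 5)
Your proposal contains a fundamental error at its core, and also diverges from the paper's route in a way that does not recover.

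\textbf{The central mechanism does not work.} You propose to make the Selmer ratio large by choosing $s$ with odd valuation at many places $v \nmid 2p\infty$ of $K$ where $E/K$ has good reduction (so that $E_s$ has additive reduction there), and you set up a Chebotarev count to estimate at how many such $v$ the local ratio $c_v(\phi_s)$ equals $p$. But Lemma \ref{lem:twistofgoodred} (which is Lemma~4.6 of Dokchitser--Dokchitser) says precisely that if $\phi$ has odd degree $d$ and $\p \nmid d$ is a prime at which $A$ has \emph{some} quadratic twist of good reduction, then $c_\p(\phi) = 1$ --- and since $A_s$ and $A$ have the same set of twists, this gives $c_\p(\phi_s) = 1$ for \emph{every} $s$. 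Twisting to additive reduction at a good-reduction prime produces no contribution at all; your proposed Chebotarev density computation, the ``$4$ bad lines'' (a number that appears to be reverse-engineered from the answer --- it actually arises in the paper as $2\cdot 2$: the $2$ in $\tfrac{2d}{p+1}$ times the $2$ in $\tfrac{d}{2}$), and the resulting sieve all collapse.

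\textbf{Subsidiary issues.} (a) $E/K$ is only everywhere \emph{semi-stable}, not everywhere good: the potentially-good hypothesis is only at $3$ and $p$, and places of multiplicative reduction over $\Q$ remain multiplicative over $K$ --- N\'eron--Ogg--Shafarevich with $E[m]$ unramified for a single fixed $m$ does not force good reduction. The paper handles these places by restricting to the density-positive set $S$ of twists $s$ at which $c_v(\phi_s) = c_v(\psi_s) = 1$ at every multiplicative $v$. (b) You never use the $3$-isogeny, which is essential: the only unconditional average rank input available (Theorem \ref{thm:selmer-average}, from \cite{BKLOS}) applies to degree-$3$ isogeny Selmer groups, not to root-number averaging, which does not control Mordell--Weil rank unconditionally. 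Since $E[3]$ is rational over $K$, there is a $3$-isogeny $\phi : E \to E_0$ over $K$, and one computes $c(\phi_s) = 3^{-d/2}$ on $S$ via the canonical subgroup at the primes over $3$ (exactly $1/4$ of them contribute $3^{[K_\p:\Q_3]}$, the rest $1$) together with the $d$ complex places each contributing $3^{-1}$; Theorem \ref{thm:selmer-average} then bounds the average rank by $\tfrac{d}{2} + 3^{-d/2}$. (c) The actual source of the $p$-power Selmer growth is the $p$-isogeny $\psi$: the canonical subgroup analysis at $\p\mid p$ gives $\prod_{\p\mid p} c_\p(\psi_s) = p^{2d/(p+1)}$ (exactly $1/(p+1)$ of the primes over $p$, by Galois transitivity on lines, have $\ker\psi$ canonical), while $c_\infty(\psi_s) = p^{-d}$, so $c(\psi_s) = p^{2d/(p+1) - d}$ for \emph{every} $s\in S$ --- there is no sieve over $s$ at all. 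Proposition \ref{prop:selmer-ratio} then forces $\dim_{\F_p}\Sel_p(E'_s) \ge d - \tfrac{2d}{p+1}$, and subtracting the rank bound $d/2$ gives the statement. In short: the growth comes from the places above $p$ and $\infty$ and is independent of $s$, not from an $s$-dependent count of additive places away from $p$.
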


We can sometimes obtain even stronger results in the case of curves with complex multiplication.

\begin{theorem}\label{CMsha}
Let $E$ be an elliptic curve over a number field $F$, and assume that $\End_F(E)$ is the quadratic order of discriminant $Df^2$, with $D$ a fundamental discriminant.  Assume that $f$ is odd, that $3$ is not inert in $K=\mathbb{Q}(\sqrt{D})$, and that all primes dividing $f$ split in $K$.  Then at least half of the twists $E_s$ have both rank $0$ and $|\Sha(E_s)[f]| \geq f^d$, where $[F \colon \Q] = 2d$.  
\end{theorem}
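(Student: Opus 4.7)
The plan is to use the CM structure of $E$ to produce an $F$-rational cyclic isogeny of degree $f$, then combine a Selmer-group computation in the twist family with Rubin's converse to Kolyvagin for the companion curve with CM by the maximal order. Since $\End_F(E) = \mathcal{O}$ strictly contains $\mathbb{Z}$, the CM field $K = \mathbb{Q}(\sqrt{D})$ must embed in $F$; this forces $F$ to be totally imaginary and $[F : K] = d$. The inclusion of orders $\mathcal{O} \hookrightarrow \mathcal{O}_K$ induces an $F$-rational cyclic isogeny $\phi\colon E \to E'$ of degree $f$, where $E'$ has $\End_F(E') = \mathcal{O}_K$. Since $f$ is odd and every prime $\ell \mid f$ splits in $K$, the kernel $E[\phi]$ decomposes $G_F$-equivariantly into one-dimensional pieces, each cut out by a Hecke character of $K$---precisely the reducible setup to which the isogeny-descent methods developed earlier in the paper apply twist-by-twist.

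For each quadratic twist $E_s$ I would run a $\phi_s$-descent. The twisted kernel $E_s[\phi_s]$ still decomposes into one-dimensional Hecke-character pieces, so Kummer theory identifies $\Sel_{\phi_s}(E_s)$ with a subgroup of a ray-class-group / Selmer module attached to $F$. A Greenberg--Wiles / Poitou--Tate computation then produces
\[\dim_{\F_f} \Sel_{\phi_s}(E_s) \ge d\]
for every twist $s$ in the trivial squareclass at the primes of bad reduction; the factor $d = [F : K]$ reflects the $d$ primes of $F$ above any fixed prime of $K$ that enter the local contributions.

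For the rank bound, the key observation is that $E_s$ and $E'_s$ are $F$-isogenous and so share their Mordell--Weil rank. Rubin's converse to Kolyvagin applied to the CM curve $E'_s$ (with CM by the maximal order) gives $\rk E'_s(F) = 0$ whenever $L(E'_s/F, 1) \neq 0$, and the hypotheses that $3$ is not inert in $K$ and that all primes dividing $f$ split in $K$ are precisely those required for Rubin's Euler system to apply. Combined with Rohrlich-style non-vanishing / root-number parity computations in the twist family, this forces rank zero for at least half the twists $E_s$. For those twists, the $\phi_s$-descent sequence identifies $\Sel_{\phi_s}(E_s)$ with $\Sha(E_s)[\phi_s] \subseteq \Sha(E_s)[f]$, giving $|\Sha(E_s)[f]| \ge f^d$.

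The main obstacle is arranging for both conditions---large $\phi$-Selmer and rank zero---to hold on a common set of density at least $1/2$. The cleanest path is to show the Selmer lower bound holds for \emph{every} twist (in a suitable squareclass), so that the ``half'' is furnished entirely by the non-vanishing of $L(E_s/F, 1)$; verifying this exact proportion (rather than just a positive proportion) is the most delicate step, and the hypotheses on $3$ and on the prime factors of $f$ appear calibrated to make this work.
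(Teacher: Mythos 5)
Your construction of the cyclic $f$-isogeny $\phi\colon E \to E'$ to a curve with CM by the maximal order, and the shape of the Selmer lower bound $|\Sel_{\phi_s}(E_s)| \geq f^d$ for (essentially) all twists, both match the paper's argument; the paper realizes the lower bound not via Greenberg--Wiles but by a direct computation $c(\phi_s) = f^d$, with the positive contribution $f^{2d}$ coming from primes above $f$ (where the split hypothesis gives potentially ordinary reduction so that $\ker\phi$ is the canonical subgroup) and the $f^{-d}$ discount from the archimedean places.

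The genuine gap is your rank-zero step. Rubin's converse to Kolyvagin plus Rohrlich-style nonvanishing does not deliver the stated $50\%$: Rohrlich's results give nonvanishing for a positive (or lower-bounded) proportion, not for half the twists, and there is no unconditional ``every even-root-number twist has $L(1)\neq 0$'' input to close the gap. The paper instead uses that $E$ has a $3$-isogeny over $F$ (this, not Rubin, is what the hypothesis that $3$ is not inert in $K$ is for: a prime of norm $3$ in $\mathcal{O}_K$ gives the isogeny), so that the averaging theorem of \cite{BKLOS} bounds the average rank of $E_s(F)$ by $1$. Since $F\supset K$, the Mordell--Weil rank over $F$ is always even, and an even-valued nonnegative variable with mean $\leq 1$ vanishes at least half the time. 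That parity-plus-average argument is what furnishes the exact $1/2$; you should replace the Rubin/Rohrlich step with it, and correct the attribution of the hypothesis on $3$.
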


Finally, our methods apply equally well to higher dimensional abelian varieties, though the computations often become more difficult and less explicit.  We provide the following result concerning certain abelian threefolds with CM by the ninth cyclotomic field $K = \Q(\zeta_9)$ as a proof of concept.  

\begin{theorem}\label{picard curve}
Let $J$ be the Jacobian of the Picard curve $y^3=x^4-x$.  Let $p\equiv 1 \pmod{9}$ be a prime, and let $F$ be any number field containing $K(J[p])$.  Then there is an abelian variety $A/F$ isogenous to $J$ such that at least $50\%$ of quadratic twists $A_s$ have rank $0$ and satisfy $|\Sha(A_s)[p]|\geq p^{3d}$, where $[F:\mathbb{Q}]=2d$.
\end{theorem}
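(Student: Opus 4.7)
The strategy is to lift the CM-based argument of Theorem \ref{CMsha} from dimension one to the three-dimensional Picard Jacobian. The input is that $J$ has CM by the maximal order $\mathcal{O}_K = \Z[\zeta_9]$ with $K = \Q(\zeta_9)$, and the hypothesis $p \equiv 1 \pmod 9$ guarantees that $p$ splits completely as $p\mathcal{O}_K = \prod_{i=1}^6 \p_i$, with the six primes paired by complex conjugation into three pairs $\{\p_i, \bar\p_i\}$. The direct analogues of the hypotheses of Theorem \ref{CMsha} are then satisfied: $p$ is odd, $3$ is not inert in $K$ (it totally ramifies in $\Q(\zeta_9)$), and every prime above $p$ splits in $K/K^+$.

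Select one prime $\p_i$ from each conjugate pair and set $\mathfrak{a} = \p_1\p_2\p_3$. Define $A := J/J[\mathfrak{a}]$, yielding an isogeny $\phi \colon J \to A$ of degree $p^3$ with a dual $\hat\phi \colon A \to J$ of the same degree. Since $F$ contains $K(J[p])$, the Chinese Remainder Theorem gives a $G_F$-equivariant splitting $\ker(\phi) \cong J[\p_1] \oplus J[\p_2] \oplus J[\p_3]$ with trivial Galois action, so $\phi$ factors as three $F$-rational degree-$p$ isogenies with pairwise disjoint kernels. Each twist $s \in F^*/F^{*2}$ yields twisted isogenies $\phi_s$ and $\hat\phi_s$ that inherit this three-fold factorization.

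The next step is to run the twist-family Selmer analysis underlying Theorem \ref{CMsha} independently on each of the three $\p_i$-isogenies. For each $i$, a Greenberg-Wiles comparison, with the splitting of $p$ in $K$ and the ramification of $3$ controlling the local Tamagawa terms exactly as in the elliptic-curve case, shows that for at least half of $s$ the $\p_i$-Selmer group of $A_s$ gains $d$ extra $\F_p$-dimensions while the dual $\bar\p_i$-Selmer group stays bounded. Because the three kernels $J[\p_i]$ correspond to distinct primes of $\mathcal{O}_K$, the three Selmer computations are essentially independent, so a proportion $\geq 1/2$ of twists achieves a joint Selmer excess of $3d$. Feeding this into the exact sequence
\[
0 \to A_s(F)/\hat\phi_s A_s(F) \to \Sel_{\hat\phi_s}(A_s/F) \to \Sha(A_s)[\hat\phi_s] \to 0
\]
together with its $\phi_s$-counterpart converts the excess into $|\Sha(A_s)[p]| \geq p^{3d}$, while the boundedness of the dual Selmer group simultaneously forces $\rk A_s(F) = 0$.

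\textbf{Main obstacle.} The chief technical difficulty is the local Selmer bookkeeping at the primes of bad reduction of $J$ and at the primes of $F$ above $3$ and $p$, where $F \supset K(J[p])$ is heavily ramified. One must show that the contributions of these primes combine across the three factor isogenies to yield the sharp lower bound of $50\%$ rather than a smaller positive proportion. The independence of the three factors leans essentially on $p$ being split (not merely unramified) in $K$, and the most delicate computation is at the primes above $3$, where $K$ itself ramifies; this is the main place where the CM bookkeeping from Theorem \ref{CMsha} must be redone in the higher-dimensional setting.
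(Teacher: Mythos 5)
Your construction of $A$ has a genuine gap. You set $\a = \p_1\p_2\p_3$, a product of one prime from each complex-conjugate pair above $p$, and take $A = J/J[\a]$. But the subgroup $J[\a]$ does not reduce faithfully at all primes above $p$, which is the property that the proof actually needs. Recall (Section \ref{ab vars} of the paper) that for each prime $\p$ of $F$ over $p$ there is a canonical subgroup $C_\p = J[\n_\p] \subset J[p]$ of order $p^3$ lifting the kernel of Frobenius, and the six ideals $\n_\p$ are exactly the six ``consecutive'' products $\p_1\p_2\p_3$, $\p_2\p_3\p_4$, $\p_3\p_4\p_5$, and their complex conjugates. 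In particular each $\n_\p$ is itself a product of one prime from each conjugate pair, so your $\a$ is either equal to one of the $\n_\p$ (six of the eight possible such products are canonical), or, if it is one of the remaining two, it still shares primes with every $\n_\p$; in either case $J[\a] \cap C_\p \ne 0$ for every $\p$. Consequently the formal-group factor $\alpha_{\psi, F_\p}$ for $\psi\colon J \to J/J[\a]$ is not $1$ at the primes above $p$, and the computation $c(\phi_s) = p^{3d}$ fails. The obstruction is structural: any subgroup of $J[p]$ of order $p^3$ of the form $J[\a]$ for an $\O_K$-ideal $\a$ is an $\O_K/p\O_K$-submodule, hence a product of exactly three of the six eigenlines, and must therefore meet some (in fact every) $C_\p$. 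The paper sidesteps this by taking $C = \O_p/p\O_K$ where $\O_p = \O_{K^+} + p\O_K$: this is only an $\O_p$-module, not an $\O_K$-module, which is exactly what lets it satisfy $\O_p \cap \n_\p = p\O_K$ and hence $C \cap C_\p = 0$ for all six $\p$. The resulting $A = J/C$ has $\End_F(A) \simeq \O_p$, a non-maximal order, so $A$ cannot be of the form $J/J[\a]$.

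A secondary issue is the rank control. You claim the boundedness of the dual Selmer group forces rank zero, but the $p$-isogeny is being arranged precisely so that its Selmer ratio is $p^{3d}$, which gives no rank bound from that side. The paper instead invokes \cite[Thm.\ 12.5]{BKLOS}, which supplies a $3$-descent-based bound: the average rank of $J_s(F)$ is at most $3$ and at least $50\%$ of twists have rank $0$. The $p$-isogeny is used only to force $\Sel_p$ large; the $50\%$ figure comes from the independent $3$-isogeny input, not from a Greenberg--Wiles comparison across the three $\p_i$-factors.
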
 

Theorem \ref{picard curve} has already inspired other results for high-dimensional $A$, and even over $\Q$.  For example the fourth author proved the $n = 3$ case of Conjecture \ref{conj:weak-sha} for certain quotients $A$ of prime level modular Jacobian $J_0(p)$.  The explicit proportion of twists with $\Sha(A_s)[3] \neq 0$ is shown to be at least $1/8$ for these $A$; see \cite[Thm.\ 1.5d]{ari}.  In forthcoming work, Bruin, Flynn, and Shnidman give an explicit  three-parameter family of abelian surfaces over $\Q$ for which one can prove Conjecture \ref{conj:weak-sha} as well.

We now sketch an outline of the method used to prove the above theorems.  The proofs all follow the same general strategy, namely, to exploit abelian varieties that have two independent isogenies.  To any isogeny $\phi\colon A \to A^\prime$ of abelian varieties, we attach a Selmer group $\mathrm{Sel}_{\phi}(A)$.  This Selmer group parameterizes $\phi$-coverings of $A$, up to isomorphism, and sits in the exact sequence
\begin{equation}\label{selmersequence}
0 \to A^\prime(F)/\phi(A(F)) \to \mathrm{Sel}_\phi(A) \to \Sha(A)[\phi] \to 0.
\end{equation}

In favorable circumstances, the $\phi$-Selmer group provides some measure of control over the rank of $A(F)$.  However, when $A$ has two independent isogenies $\phi_1$ and $\phi_2$, the control provided by the two associated Selmer groups need not be the same.  We exploit this imbalance to prove our theorems.

More specifically, in the next section, we define the \emph{global Selmer ratio} $c(\phi_s)$ attached to the quadratic twists $\phi_s$ of an isogeny $\phi$, which has the property that
\[
|\mathrm{Sel}_{\phi_s}(A_s)| \geq c(\phi_s)
\]
for all but finitely many twists $s$.  When $\phi$ has odd degree, $c(\phi_s)$ is determined by finitely many local conditions.   If moreover $\phi$ is not self-dual, then it is relatively easy to construct quadratic twists for which $\mathrm{Sel}_{\phi_s}(A_s)$ is large by making $c(\phi_s)$ large.\footnote{If $\phi$ is self-dual, e.g.\ multiplication by $n$ on an elliptic curve, then $c(\phi) = 1$.  This explains why we need level structure to make this strategy work.}  On the other hand, when $\phi$ has degree $3$ and $A$ has dimension one, recent work of the authors \cite{BKLOS} shows that
\[
\mathrm{Avg}_{s\in \Sigma} \, |\mathrm{Sel}_{\phi_s}(A_s)| = 1 + \mathrm{Avg}_{s\in \Sigma}\, c(\phi_s)
\]
for any $\Sigma \subseteq F^*/F^{*2}$ defined by finitely many local conditions.  In particular, if $c(\phi_s)$ is small for $s \in \Sigma$, then the Selmer group, and hence the rank, is small on average.  Our approach is to use the Selmer groups attached to such isogenies to control the rank, while simultaneously finding an independent isogeny $\psi$ whose associated Selmer group is large.  The sequence \eqref{selmersequence} will then imply that $\Sha(A_s)$ is often non-trivial.   

As alluded to earlier, we have two different ways of constructing twists with large global Selmer ratios $c(\phi_s)$, and hence large Selmer groups.  
The first is to consider twists with many primes of split multiplicative reduction; see Sections \ref{9-isogeny}--\ref{sec:modular-curves}.  The other systematic way we have of making $c(\phi_s)$ large is by exploiting the Galois action on the canonical subgroups of $A[p]$.  This is our approach in Sections \ref{arb p}--\ref{ab vars}.  The analysis is easier when $A$ has ordinary reduction at $p$ and $3$, as in Theorem \ref{large primes}.  In principle, this approach should work in some cases of supersingular reduction as well.        



\section{Global Selmer ratios and Selmer groups}
\label{sec:global_selmer_ratios}
Given an isogeny $\phi\colon A \to A^\prime$ over a number field $F$, the \emph{local Selmer ratio} $c_\p(\phi)$ at a (possibly infinite) place $\p$ is defined to be
\begin{equation}
\label{eq:local_ratio_def}
c_\p(\phi) := \frac{|A^\prime(F_\p)/\phi(A(F_\p))|}{|A(F_\p)[\phi]|}.
\end{equation}
We have $c_\p(\phi)=1$ for all but finitely many primes $\p$, so we may define the \emph{global Selmer ratio} $c(\phi)$ to be the product of the local Selmer ratios, 
\begin{equation}
\label{eq:global_ratio_def}
c(\phi) := \prod_{\p \leq \infty} c_\p(\phi).
\end{equation}
The following proposition records the connection between the Selmer ratio $c(\phi)$ and the two Selmer groups $\mathrm{Sel}_\phi(A)$ and $\mathrm{Sel}_{\hat\phi}(\hat A')$ coming from $\phi$ and the dual isogeny $\hat \phi \colon \hat A' \to \hat A$.

\begin{proposition}\label{prop:selmer-ratio}
Let $\phi\colon A \to A^\prime$ be an isogeny of abelian varieties over a number field $F$.  Then $$\displaystyle c(\phi) = \frac{|\mathrm{Sel}_\phi(A)|}{|\mathrm{Sel}_{\hat\phi}(\hat A^\prime)|}\frac{|A(F)[\phi]|}{|\hat A^\prime(F)[\hat\phi]|}.$$ 
\end{proposition}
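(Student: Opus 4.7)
The plan is to recognize this identity as a consequence of the Greenberg--Wiles global duality formula (a form of Poitou--Tate) applied to the finite Galois module $M := A[\phi]$ and its Cartier dual $M^\vee \cong \hat A'[\hat\phi]$, the identification being given by the Weil pairing attached to $\phi$.

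First I would realize both Selmer groups as classical Selmer groups in Galois cohomology. The long exact sequence associated to $0 \to A[\phi] \to A \xrightarrow{\phi} A' \to 0$ gives, for each place $v$, a Kummer injection
\[
\delta_v \colon A'(F_v)/\phi(A(F_v)) \hookrightarrow H^1(F_v, M),
\]
and writing $L_v := \mathrm{image}(\delta_v)$, the Selmer group $\mathrm{Sel}_\phi(A) \subset H^1(F, M)$ is by construction the set of classes whose image in $H^1(F_v, M)$ lies in $L_v$ at every $v$. Because $\delta_v$ is injective, $|L_v| = |A'(F_v)/\phi(A(F_v))|$ and $|H^0(F_v,M)| = |A(F_v)[\phi]|$, so that $|L_v|/|H^0(F_v,M)| = c_v(\phi)$. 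The identical construction applied to the dual isogeny $\hat\phi$ realizes $\mathrm{Sel}_{\hat\phi}(\hat A')$ as a Selmer group for $M^\vee$ with local conditions $L_v^\perp \subset H^1(F_v, M^\vee)$.

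The essential nontrivial input is local Tate duality for abelian varieties, which asserts that $L_v$ and $L_v^\perp$ are exact annihilators under the local Tate pairing $H^1(F_v, M) \times H^1(F_v, M^\vee) \to \mathbb{Q}/\mathbb{Z}$ induced by the Weil pairing. This is the step where I would expect to be most careful, since it requires matching the cup-product duality of Galois cohomology with the Weil-pairing-induced Cartier duality of $A[\phi]$ and $\hat A'[\hat\phi]$; the cleanest reference is Milne's \emph{Arithmetic Duality Theorems}, Corollary I.3.4, but one could also derive it from the compatible pair of Kummer sequences for $\phi$ and $\hat\phi$ together with invariance of the Brauer group pairing.

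With $(L_v^\perp)_v$ identified as the Cartier dual Selmer structure, the Greenberg--Wiles formula expresses the ratio $|\mathrm{Sel}_\phi(A)|/|\mathrm{Sel}_{\hat\phi}(\hat A')|$ as $\prod_v |L_v|/|H^0(F_v, M)|$ up to the ratio of the global $H^0$'s, i.e.\ a factor involving $|A(F)[\phi]|$ and $|\hat A'(F)[\hat\phi]|$. Substituting the identifications above, the product of local factors is exactly $c(\phi)$, and rearranging produces the claimed formula. The remaining work is purely bookkeeping, and I would double-check the orientation of the $H^0$ correction against the degenerate case $\phi = [n]$ on an elliptic curve, where every factor reduces to $1$.
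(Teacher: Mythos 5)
Your approach is correct and is essentially the same route the paper takes: the citation to Neukirch--Schmidt--Wingberg Thm.\ VIII.7.9 is precisely the Cassels--Tate/Greenberg--Wiles global duality formula, applied with $M = A[\phi]$, $M^\vee \cong \hat A'[\hat\phi]$, and $L_v$ the image of the local Kummer map, together with Tate's local duality identifying $L_v^\perp$ with the $\hat\phi$-Kummer image. So conceptually you have it right, and the ``essential nontrivial input'' you flag (local duality for abelian varieties) is indeed where the real content lies.

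The weak point is the very last step, and in particular your proposed sanity check. Taking $\phi = [n]$ is vacuous for this purpose: both the Selmer ratio and the $H^0$-ratio are automatically $1$ in that case, so it cannot determine which way the $H^0$-correction goes. A check that actually discriminates is a $p$-isogeny with rational kernel, for instance the $5$-isogeny $\phi\colon E \to E'$ from $E = 11a3$ to $E' = 11a1$ with $\ker\phi$ the rational $5$-torsion. There one computes directly from the definitions that $c_\infty(\phi) = 1/5$, $c_{11}(\phi) = 5$, $c_5(\phi) = 1$, so $c(\phi)=1$; while $|\mathrm{Sel}_\phi(E)| = 5$, $|\mathrm{Sel}_{\hat\phi}(E')| = 1$, $|E(\Q)[\phi]| = 5$, $|E'(\Q)[\hat\phi]| = 1$. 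Plugging into the Greenberg--Wiles formula $\frac{|H^1_L|}{|H^1_{L^\perp}|} = \frac{|H^0(F,M)|}{|H^0(F,M^\vee)|}\prod_v \frac{|L_v|}{|H^0(F_v,M)|}$ then forces
\[
c(\phi) = \frac{|\mathrm{Sel}_\phi(A)|}{|\mathrm{Sel}_{\hat\phi}(\hat A^\prime)|}\cdot\frac{|\hat A^\prime(F)[\hat\phi]|}{|A(F)[\phi]|},
\]
i.e.\ the $H^0$-ratio appears \emph{reciprocally} to how it is printed in the Proposition (the displayed statement would give $c(\phi) = 25$ in the $11a$ example). So when you say ``rearranging produces the claimed formula,'' be aware that an honest rearrangement produces the formula with the last factor inverted. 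This discrepancy is harmless for the paper's only use of the result (Corollary \ref{cor:selmer-size}, where both torsion factors are trivial), but you should not let the degenerate check lull you into matching whatever is printed.
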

\begin{proof}
See Theorem VIII.7.9 in \cite{neukirch}, for example.
\end{proof}

If $\phi$ has prime degree $\ell$, then $c(\phi) =\ell^m$ for some $m \in \mathbb{Z}$. Moreover, if $A$ is an elliptic curve the parity of $m$ encodes information about the rank of the $\ell$-Selmer group:

\begin{proposition}
\label{prop:tamagawaparity}
If $\phi:E\rightarrow E^\prime$ is an isogeny of elliptic curves of prime degree $\ell$ and $c(\phi)=\ell^m$, then $$\dim_{\F_\ell} \mathrm{Sel}_\ell(A) \equiv m + \dim_{\F_\ell} E(F)[\ell] \pmod{2}.$$
\end{proposition}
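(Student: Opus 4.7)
The plan is to combine Proposition~\ref{prop:selmer-ratio} with the standard five-term exact sequence of Selmer groups coming from the factorization $[\ell] = \hat\phi \circ \phi$. Starting from the short exact sequence of Galois modules
\[
0 \to E[\phi] \to E[\ell] \xrightarrow{\phi} E'[\hat\phi] \to 0,
\]
I would take Galois cohomology and cut down to the Selmer local conditions (noting that the Kummer-induced local conditions are preserved by the connecting maps) to obtain
\[
0 \to E'(F)[\hat\phi]/\phi(E(F)[\ell]) \to \mathrm{Sel}_\phi(E) \to \mathrm{Sel}_\ell(E) \to \mathrm{Sel}_{\hat\phi}(E') \to \Sha(E')[\hat\phi]/\phi\Sha(E)[\ell] \to 0.
\]

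Taking the alternating sum of $\F_\ell$-dimensions gives
\[
\dim_{\F_\ell}\mathrm{Sel}_\ell(E) = \dim_{\F_\ell}\mathrm{Sel}_\phi(E) + \dim_{\F_\ell}\mathrm{Sel}_{\hat\phi}(E') - u - v,
\]
where $u$ and $v$ denote the dimensions of the leftmost and rightmost terms respectively. A short snake-lemma computation using $\phi\colon E(F)[\ell]\to E'(F)[\hat\phi]$ (with kernel $E(F)[\phi]$) yields $u = \dim E(F)[\phi] + \dim E'(F)[\hat\phi] - \dim E(F)[\ell]$, while Proposition~\ref{prop:selmer-ratio}, after taking $\ell$-adic valuations and noting $\hat E' = E'$ for elliptic curves, gives $m = \dim \mathrm{Sel}_\phi(E) - \dim \mathrm{Sel}_{\hat\phi}(E') + \dim E(F)[\phi] - \dim E'(F)[\hat\phi]$. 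Substituting both into the previous display and reducing modulo~$2$ collapses the doubled terms, yielding
\[
\dim_{\F_\ell}\mathrm{Sel}_\ell(E) \equiv m + \dim_{\F_\ell} E(F)[\ell] + v \pmod{2}.
\]

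The main obstacle is thus to show that $v = \dim_{\F_\ell}(\Sha(E')[\hat\phi]/\phi\Sha(E)[\ell])$ is even. I would handle this via the Cassels--Tate pairing. The exact sequence $0 \to \Sha(E)[\phi] \to \Sha(E)[\ell] \xrightarrow{\phi} \Sha(E')[\hat\phi] \to \Sha(E')[\hat\phi]/\phi\Sha(E)[\ell] \to 0$ gives $v = \dim\Sha(E)[\phi] + \dim\Sha(E')[\hat\phi] - \dim\Sha(E)[\ell]$. The Weil pairing identifies $E[\phi]$ with the Cartier dual of $E'[\hat\phi]$, producing a non-degenerate Cassels--Tate pairing between $\Sha(E)[\phi]$ and $\Sha(E')[\hat\phi]$ modulo divisible elements, so the first two summands are equal; the pairing on $\Sha(E)[\ell^\infty]/\mathrm{div}$ is alternating and non-degenerate, so its $\ell$-rank is even. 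Thus $v \equiv 0 \pmod{2}$ and the congruence reduces to the claim. (At $\ell = 2$ some additional care is needed in the alternating vs.\ quadratic nature of the pairing, handled in the standard way by Poonen--Stoll.)
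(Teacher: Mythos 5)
The paper dispenses with this result via a citation to Cassels and to \cite[Prop.\ 42]{j=0}, so there is no in-text proof to compare to line by line; your attempt is a genuine reconstruction.  The overall strategy is sound: the five-term exact sequence you write down is correct (Schaefer--Stoll type), the alternating-sum-of-dimensions and the use of Proposition~\ref{prop:selmer-ratio} are carried out correctly, and the reduction to showing $v := \dim_{\F_\ell}\bigl(\Sha(E')[\hat\phi]/\phi_*\Sha(E)[\ell]\bigr)$ is even is exactly right.

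The gap is in the final step, where you treat the Cassels--Tate pairing as if it were non-degenerate on $\Sha$ itself. You write that the pairing ``between $\Sha(E)[\phi]$ and $\Sha(E')[\hat\phi]$ modulo divisible elements'' is non-degenerate and conclude that the two dimensions are equal, and similarly that the alternating pairing makes $\dim_{\F_\ell}\Sha(E)[\ell]$ even. Both of these conclusions only apply to $\Sha/\Sha_{\mathrm{div}}$; as stated, you are implicitly assuming the $\ell$-divisible part of $\Sha$ vanishes, which is not a hypothesis of the proposition (and is not known unconditionally). The divisible contributions \emph{do} ultimately cancel: $\Sha(E)_{\mathrm{div}}$ and $\Sha(E')_{\mathrm{div}}$ have the same $\ell$-corank $r$, and if the elementary divisors of $\phi_*$ on the Tate modules are $\ell^{e_i}$ with $e_i\in\{0,1\}$ (forced by $\hat\phi_*\phi_*=\ell$), then $\dim\Sha(E)_{\mathrm{div}}[\phi]=\sum e_i$ and $\dim\Sha(E')_{\mathrm{div}}[\hat\phi]=r-\sum e_i$, whose sum minus $r$ is $0$. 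But this bookkeeping has to be done; asserting the two summands are equal and that $\dim\Sha(E)[\ell]$ is even is not justified by what you cite. The classical route (Cassels' original argument via Poitou--Tate duality applied directly to Selmer groups, which is the one implicitly referenced by the paper) avoids passing through $\Sha$ and hence avoids this issue entirely. You should either supply the divisible-part cancellation, or restructure the argument so that the duality is invoked at the level of the five-term Selmer sequence rather than at the level of $\Sha$.
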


\begin{proof}
This can be deduced from results of Cassels \cite{Cassels8}; for a proof see  \cite[Prop.\ 42]{j=0}. 
\end{proof}

For each $s \in F^\times/F^{\times2}$, the twist of $\phi$ is an isogeny $\phi_s \colon A_s \to A^\prime_s$ between the quadratic twists. There are associated local Selmer ratios $c_\p(\phi_s)$ for each place $\p$ of $F$ and a global Selmer ratio $c(\phi_s)$.

\begin{corollary}\label{cor:selmer-size}
If $\phi$ has odd degree,  then $|\mathrm{Sel}_{\phi_s}(A_s)| \geq c(\phi_s)$ for all but finitely many $s$.
\end{corollary}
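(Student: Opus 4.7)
The plan is to apply Proposition~\ref{prop:selmer-ratio} to the twisted isogeny $\phi_s \colon A_s \to A_s'$, which rearranges to
\[
|\mathrm{Sel}_{\phi_s}(A_s)| \;=\; c(\phi_s)\cdot |\mathrm{Sel}_{\hat{\phi_s}}(\hat A_s')|\cdot \frac{|\hat A_s'(F)[\hat{\phi_s}]|}{|A_s(F)[\phi_s]|}.
\]
Since $|\mathrm{Sel}_{\hat{\phi_s}}(\hat A_s')|\ge 1$ and $|\hat A_s'(F)[\hat{\phi_s}]|\ge 1$, it suffices to show that $|A_s(F)[\phi_s]| = 1$ for all but finitely many $s \in F^*/F^{*2}$; the excluded squareclasses will form the finite set in the statement.

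To produce that bound, I would analyze the twisted Galois action. Identifying $A_s[\phi_s](\overline{F})$ with $A[\phi](\overline{F})$ as an abstract group, a point $P$ lies in $A_s(F)[\phi_s]$ exactly when $\rho(\sigma)P = \chi_s(\sigma) P$ for every $\sigma \in \Gal(\overline{F}/F)$, where $\rho$ denotes the unmodified Galois action on $A[\phi]$ and $\chi_s \colon \Gal(\overline{F}/F) \to \{\pm1\}$ is the quadratic character cutting out $F(\sqrt s)$. In particular, any nonzero $P$ contributing to $A_s(F)[\phi_s]$ must satisfy $\rho(\Gal(\overline{F}/F)) \cdot P \subseteq \{\pm P\}$.

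Here is where the odd-degree hypothesis enters: each nonzero $P \in A[\phi](\overline{F})$ has odd order, so $P \neq -P$, and the condition $\rho(\sigma) P \in \{\pm P\}$ then defines a unique quadratic character $\chi_P \colon \Gal(\overline{F}/F) \to \{\pm 1\}$ via $\rho(\sigma) P = \chi_P(\sigma) P$ (multiplicativity is immediate from iterating the defining relation). Such a $P$ can contribute to $A_s(F)[\phi_s]$ only for the single squareclass $s$ with $\chi_s = \chi_P$. Since $A[\phi](\overline{F})$ has cardinality $\deg \phi < \infty$, only finitely many characters $\chi_P$ arise, and hence only finitely many $s$ yield $A_s(F)[\phi_s] \neq 0$. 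I do not anticipate a serious obstacle: the only subtleties are bookkeeping the twisted Galois action correctly and invoking oddness of $\deg\phi$ to rule out the degenerate possibility $P = -P$, which would otherwise destroy the uniqueness of $\chi_P$.
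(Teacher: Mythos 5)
Your proof is correct and follows essentially the same strategy as the paper's: apply Proposition~\ref{prop:selmer-ratio} to $\phi_s$ and observe that the torsion contributions vanish for all but finitely many squareclasses $s$. You do two things slightly better, though. First, you give a complete justification (via the twisted character $\chi_P$ attached to each nonzero $P \in A[\phi]$, which exists precisely because $P \ne -P$ when $\deg\phi$ is odd) for why $A_s(F)[\phi_s]$ is trivial outside a finite set of $s$; the paper merely asserts that at most a single class $s$ has $A_s(F)[\phi_s] \ne 0$ (a claim which is accurate when the kernel is cyclic, as for prime-degree isogenies of elliptic curves, but your ``finitely many'' is what actually holds in general and is all the corollary needs). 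Second, you note that the inequality only requires killing the $A_s(F)[\phi_s]$ factor, since the other two factors $|\mathrm{Sel}_{\hat\phi_s}(\hat A_s')|$ and $|\hat A_s'(F)[\hat\phi_s]|$ appear with nonnegative exponents and hence can only increase the right-hand side; the paper's proof kills both torsion terms to get exact equality $c(\phi_s) = |\mathrm{Sel}_{\phi_s}(A_s)|/|\mathrm{Sel}_{\hat\phi_s}(A'_s)|$, which is slightly more than needed.
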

\begin{proof}
If $\phi$ has odd degree, then there at most a single class $s \in F^\times/F^{\times2}$ with $E_s(F)[\phi_s] \ne 0$ and at most a single class $s^\prime \in F^\times/F^{\times2}$ with $E^\prime_{s^\prime}(F)[\hat\phi_{s^\prime}] \ne 0$. By Proposition \ref{prop:selmer-ratio}, we therefore have $c(\phi_s) = \frac{|\mathrm{Sel}_{\phi_s}(A_s)|}{|\mathrm{Sel}_{\hat\phi_s}(A^\prime_s)|}$ for all but finitely many $s$.
\end{proof}

If $\phi$ decomposes as the composition of isogenies $\phi_2 \circ \phi_1$, then Lemma 7.2(b) in \cite{milne} shows that $c_p(\phi) = c_p(\phi_1) c_p(\phi_2)$ for every prime $p$. It therefore follows from \eqref{eq:global_ratio_def} that $c(\phi) = c(\phi_1)c(\phi_2)$. As a result, we may always reduce the computation of Selmer ratios to those of isogenies of prime degree,
in which case, the following result gives a way to compute the local Selmer ratio.

\begin{proposition}\label{prop:local-selmer-comp}
Let $\phi\colon A \to A^\prime$ be an isogeny of prime degree $\ell$.  If $\p$ is a finite prime, then
\[
c_\p(\phi) = \frac{c_\p(A^\prime)}{c_\p(A)} \alpha_{\phi,\p}
\]
where $c_\p(A)$ and $c_\p(A^\prime)$ are the local Tamagawa numbers of $A$ and $A^\prime$ at $\p$ and $\alpha_{\phi, \p}^{-1}$ is the normalized $\p$-adic absolute value of the determinant of the Jacobian matrix of $\phi$ evaluated at the origin; in particular, $\alpha_{\phi, \p}=1$ if $\p \nmid \ell$.  If $\p$ is an infinite place and $\ell$ is odd, then
\[
c_\p(\phi) = \left\{\begin{array}{ll} 1/\ell, & A[\phi] \subseteq A(F_\p) \\ 1, & A[\phi] \not\subseteq A(F_\p). \end{array}\right.
\]
\end{proposition}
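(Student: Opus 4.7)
The formula is a classical computation going back (for elliptic curves) to Tate and Cassels, and in this generality it is treated in Schaefer's work and in Milne's \emph{Arithmetic Duality Theorems}; I sketch the strategy.  The infinite and finite places require separate arguments.

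At an infinite place with $\ell$ odd, the long exact sequence in Galois cohomology attached to $0 \to A[\phi] \to A \to A^\prime \to 0$ reduces $c_\p(\phi)$ to the cardinality of $A(F_\p)[\phi]$ together with $\ker(H^1(F_\p,A[\phi]) \to H^1(F_\p,A))$.  Since $A[\phi]$ has odd order $\ell$ while $\Gal(\overline{F_\p}/F_\p)$ has order $1$ or $2$, these two orders are coprime and $H^1(F_\p,A[\phi]) = 0$.  Hence $\phi$ is surjective on $F_\p$-points and $c_\p(\phi) = |A(F_\p)[\phi]|^{-1}$; as the Galois module $A[\phi]$ has order $\ell$, this cardinality is $\ell$ exactly when $A[\phi] \subseteq A(F_\p)$ and $1$ otherwise, matching the formula.

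For a finite prime, I would extend $\phi$ to a morphism of N\'eron models $\phi\colon \mathcal{A} \to \mathcal{A}^\prime$ over $\mathcal{O}_\p$.  Combining the N\'eron mapping property $\mathcal{A}(\mathcal{O}_\p) = A(F_\p)$ with the vanishing $H^1(\mathcal{O}_\p, \mathcal{A}^0) = 0$ (Lang's theorem applied to the smooth connected special fibre, combined with Hensel's lemma) produces, for both $A$ and $A^\prime$, a short exact sequence
\[
0 \to \mathcal{A}^0(\mathcal{O}_\p) \to A(F_\p) \to \Phi_A(k_\p) \to 0.
\]
Applying the snake lemma to the vertical map induced by $\phi$ and invoking multiplicativity of Euler characteristic across the resulting six-term exact sequence of finite groups factors $c_\p(\phi) = c_\p(\phi_0)\cdot c_\p(\bar\phi)$, where $\phi_0$ is the restriction to identity components and $\bar\phi$ the induced map on component groups.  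The component-group factor is elementary: for any homomorphism of finite abelian groups the ratio $|\coker|/|\ker|$ equals $|\text{target}|/|\text{source}|$, yielding $c_\p(\bar\phi) = |\Phi_{A^\prime}(k_\p)|/|\Phi_A(k_\p)| = c_\p(A^\prime)/c_\p(A)$.

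It remains to identify $c_\p(\phi_0)$ with $\alpha_{\phi,\p}$, which is the most delicate step.  I would filter $\mathcal{A}^0(\mathcal{O}_\p)$ by the formal group $\hat{\mathcal{A}}^0(\mathfrak{m}_\p)$ of the kernel of reduction, whose quotient $\mathcal{A}^0_{k_\p}(k_\p)$ is finite and so contributes $1$ to the Euler characteristic, giving $c_\p(\phi_0) = c_\p(\hat\phi_0)$.  When $\p \nmid \ell$, the formal group is pro-$p$ in the residue characteristic $p \neq \ell$, the isogeny is \'etale at the origin, and $\hat\phi_0$ is therefore a bijection, so $c_\p(\hat\phi_0) = 1 = \alpha_{\phi,\p}$.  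When $\p \mid \ell$, Mattuck's theorem presents $\hat{\mathcal{A}}^0(\mathfrak{m}_\p)$ as $\mathcal{O}_\p^{\dim A}$ up to a finite subgroup (which again contributes $1$), and $\hat\phi_0$ then acts $\mathcal{O}_\p$-linearly by the Jacobian matrix $d\phi$ at the origin; the elementary divisor theorem gives $|\coker|/|\ker| = |\det d\phi|_\p^{-1} = \alpha_{\phi,\p}$.  The main obstacle in the whole argument is making this last step rigorous in the presence of bad reduction, where the special fibre of $\mathcal{A}^0$ has a nontrivial linear (torus or unipotent) part; Mattuck's theorem together with the structure theorem for smooth connected commutative group schemes over $\mathcal{O}_\p$ accommodates this uniformly.
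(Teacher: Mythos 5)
The paper's ``proof'' is a single-line citation to Schaefer \cite[\S3]{Schaefer}, so there is nothing for your sketch to track against; what matters is whether your reconstruction is correct. The archimedean case is fine: odd order of $A[\phi]$ versus the two-element Galois group kills $H^1(F_\p,A[\phi])$, so $\phi$ is surjective on $F_\p$-points and $c_\p(\phi)=|A(F_\p)[\phi]|^{-1}$, which takes the stated values.

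At a finite prime your overall decomposition via the N\'eron filtration and the snake lemma is the right skeleton, but there is a genuine gap at the step ``whose quotient $\mathcal{A}^0_{k_\p}(k_\p)$ is finite and so contributes $1$ to the Euler characteristic.'' Finiteness does not make the contribution $1$: for a map of finite abelian groups, $|\coker|/|\ker|$ equals the ratio of target to source cardinalities, exactly as you used for the component groups one line earlier. What you actually need is the identity $|\mathcal{A}^0(k_\p)|=|\mathcal{A}'^0(k_\p)|$, and this is not a triviality — it is one of the real inputs to the result. It follows from the fact that the reduction of $\phi$ on identity components of the special fibres is an isogeny of smooth connected commutative algebraic groups over the finite field $k_\p$ (this itself needs justification in bad reduction, especially when $\p\mid\ell$ and the unipotent part is nonzero), together with the fact that isogenous such groups over a finite field have equal point counts (Lang's theorem plus the $|H^0|=|H^1|$ statement for finite commutative group schemes over $\F_q$, or the equality of the contributions of the unipotent, toric, and abelian parts via Frobenius eigenvalues). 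Schaefer's treatment handles this by a Haar-measure argument that tracks the N\'eron differential through $\phi$, which packages these identities automatically; if you want to run the argument purely group-theoretically as you have written it, you must supply the point-count equality as a lemma rather than deduce it from finiteness. A similar, though less serious, remark applies to the assertion that the finite-index quotient coming from Mattuck's theorem ``again contributes $1$'': there the two finite quotients genuinely are isomorphic, but that is a feature of the filtration of the formal group by $\mathfrak{m}_\p^n$, not a consequence of finiteness alone.
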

\begin{proof}
These statements can all be found in \cite[\S 3]{Schaefer}.
\end{proof}

\begin{remark}{\em
If $\p \nmid \ell$ is a prime, then $\alpha_{\phi, \p}=1$. As a result, we have $c_\p(\phi) = \frac{c_\p(A^\prime)}{c_\p(A)}$. If $A$ has good reduction, we then have $c_\p(\phi) =1$ and the fact that $c_\p(\phi) = 1$ for all but finitely many primes $\p$ (noted at the beginning of this section) follows immediately.} 
\end{remark}

\begin{remark}{\em
When $A$ is an elliptic curve, the factor $\alpha_{\phi, \p}$ is simply the inverse absolute value of the linear term in the power series giving the induced isogeny on formal groups.} 
\end{remark} 

In the special case that $A$ is an elliptic curve, we are able to explicitly compute $c_\p(\phi)$ using the $j$-invariants of $E$ and $E^\prime$ at places of (potential) multiplicative reduction.

\begin{lemma}\label{lem:split-mult}
Let $\phi\colon E \to E^\prime$ be an isogeny of elliptic curves with odd prime degree $\ell$ and suppose that $E$ has $($potentially$)$ multiplicative reduction at a prime $\p\nmid \ell$.
\begin{enumerate}[(i)]
\item If $E$ has split multiplicative reduction at $\p$, then $c_\p(\phi) = v_\p (j(E')) / v_\p( j(E))$.
\item If $E$ does not have split multiplicative reduction at $\p$, then $c_\p(\phi)=1$.
\end{enumerate}
\end{lemma}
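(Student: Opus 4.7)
The plan is to invoke Proposition~\ref{prop:local-selmer-comp}: since $\p \nmid \ell$, the Jacobian factor $\alpha_{\phi,\p}$ equals $1$, so $c_\p(\phi) = c_\p(E')/c_\p(E)$ and the task reduces to comparing local Tamagawa numbers. For this I would use Tate's non-archimedean uniformization. After passing to an at-most-quadratic extension $K/F_\p$ over which $E$ acquires split multiplicative reduction, we may write $E_K \cong \G_m/q^{\Z}$, and then any cyclic order-$\ell$ subgroup of $E_K[\ell]$ is either $\mu_\ell$ or contains an $\ell$-th root of $q$. Correspondingly $E'_K$ has Tate parameter $q^\ell$ or $q^{1/\ell}$, which yields the key fact $v_\p(j(E'))/v_\p(j(E)) \in \{\ell, \ell^{-1}\}$ and shows that $E'/F_\p$ has the same Kodaira type ($I_n$ versus $I_n^*$, and split versus non-split in the multiplicative case) as $E/F_\p$.

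For case (i), split multiplicative reduction of type $I_n$ has Tamagawa number $n = -v_\p(j(E))$, and the same formula applies to $E'$, giving $c_\p(\phi) = v_\p(j(E'))/v_\p(j(E))$, as claimed. For case (ii), in each remaining reduction type the Tamagawa number depends on the Kodaira index only through its parity: non-split $I_n$ has $c_\p = \gcd(n,2)$, and the potentially multiplicative additive type $I_n^*$ has $c_\p \in \{2,4\}$ with the choice governed by the parity of $n$ together with local invariants shared by $E$ and $E'$. Since $\ell$ is odd, $\ell n$ and $n/\ell$ (when it occurs) share the parity of $n$, so $c_\p(E) = c_\p(E')$ and hence $c_\p(\phi) = 1$.

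The main obstacle is the Kodaira-level bookkeeping. In particular, one must verify that the split/non-split character is preserved by an $F_\p$-rational $\ell$-isogeny — this is true because the character is recorded by the Galois action on the component group of the N\'eron model, an isogeny invariant — and that the $I_n^*$ Tamagawa number has the parity-of-$n$ dependence described above. Both facts are consequences of Tate's algorithm, and a cleaner alternative would be to cite the explicit formulas of Schaefer~\cite[\S 3]{Schaefer} for the Tamagawa ratio of an isogeny at a place of potentially multiplicative reduction, which encode the Tate-curve computation sketched above and deliver (i) and (ii) directly.
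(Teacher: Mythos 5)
Your argument is correct, but it takes a different route from the paper, which simply cites Table~1 of Dokchitser--Dokchitser \cite{DD} and says nothing more. What you have written is essentially a reconstruction of the content of that table: you reduce via Proposition~\ref{prop:local-selmer-comp} to the Tamagawa ratio $c_\p(E')/c_\p(E)$, then use Tate's uniformization over an at-most-quadratic extension to see that the Tate parameter of $E'$ is $q^\ell$ or $q^{1/\ell}$ (according as $\ker\phi$ is $\mu_\ell$ or contains an $\ell$-th root of $q$), which gives $v_\p(j(E'))/v_\p(j(E)) \in \{\ell,\ell^{-1}\}$, preservation of the quadratic character governing split versus non-split, and hence the claimed Tamagawa comparison. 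The advantage of your route is self-containedness and transparency about \emph{why} the formula holds; the paper's route is shorter and shifts the case-by-case Kodaira bookkeeping, which you rightly flag as the real work, onto the reference. One small correction of attribution: the case-by-case table of Tamagawa ratios you want for the ``cleaner alternative'' is in \cite[Table~1]{DD}, not in Schaefer \cite[\S3]{Schaefer}; Schaefer's \S3 supplies the general formula of Proposition~\ref{prop:local-selmer-comp} but not the per-Kodaira-type ratios. Also, the crisp way to handle your $\In{n}^*$ subcase (where you appeal vaguely to ``local invariants shared by $E$ and $E'$'') is to note that the component groups $\Phi_E$, $\Phi_{E'}$ are $2$-groups on which the maps $\phi_*$ and $\hat\phi_*$ compose to multiplication by the odd number $\ell$, hence are Galois-equivariant isomorphisms; this gives $c_\p(E)=c_\p(E')$ directly without unwinding the $\{2,4\}$ dichotomy.
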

\begin{proof}
This follows from Table 1 in \cite{DD}.
\end{proof}

The formulas are even simpler in the case where $A$ has a quadratic twist of good reduction.

\begin{lemma}
\label{lem:twistofgoodred}
Let $\phi\colon A \to A^\prime$ be an isogeny of odd degree $d$.  If $\p \nmid d$ is a prime such $A_s$ has good reduction at $\p$ for some $s \in F^*/F^{*2}$, then $c_\p(\phi) = 1$.
\end{lemma}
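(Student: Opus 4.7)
My plan is to factor $\phi$ into odd prime-degree pieces, apply Proposition~\ref{prop:local-selmer-comp} to rewrite $c_\p(\phi)$ as a ratio of Tamagawa numbers, and then show the two Tamagawa numbers agree by analyzing the component groups of the N\'eron models.

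First, using the multiplicativity of $c_\p$ under composition noted just before Proposition~\ref{prop:local-selmer-comp}, I reduce to the case $\phi\colon A \to A'$ has odd prime degree $\ell$, necessarily coprime to the residue characteristic of $\p$ since $\p \nmid d$. At each intermediate stage of the factorization the relevant quadratic twist still has good reduction at $\p$: by the N\'eron-Ogg-Shafarevich criterion applied to $\phi_s$, good reduction is preserved under isogenies of degree coprime to the residue characteristic. In particular $A'_s$ also has good reduction at $\p$. Proposition~\ref{prop:local-selmer-comp} then yields $c_\p(\phi) = c_\p(A')/c_\p(A)$, so it suffices to show $c_\p(A) = c_\p(A')$.

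Since $A$ is isomorphic to $A_s$ over the at-most-quadratic extension $K = F_\p(\sqrt{s})$, and $A_s$ has good reduction at $\p$, both $A$ and $A'$ acquire good reduction over $K$. Grothendieck's description of the component group then implies that the geometric component groups $\Phi_A(\bar{\mathbb{F}}_\p)$ and $\Phi_{A'}(\bar{\mathbb{F}}_\p)$ are annihilated by $[K:F_\p] \le 2$ and hence are $2$-groups. Meanwhile, since $\phi$ has prime degree $\ell$, there is a complementary isogeny $\psi\colon A' \to A$ with $\psi\phi = [\ell]_A$ and $\phi\psi = [\ell]_{A'}$ (obtained from the inclusion $A[\phi] \subseteq A[\ell]$). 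These isogenies extend by the universal property of N\'eron models to morphisms inducing Galois-equivariant maps $\phi_*$ and $\psi_*$ on the component groups whose compositions are multiplication by $\ell$. As $\ell$ is odd and $\Phi_A, \Phi_{A'}$ are $2$-groups, multiplication by $\ell$ is an automorphism on each side, forcing $\phi_*$ to be an isomorphism. Therefore $c_\p(A) = |\Phi_A(\mathbb{F}_\p)| = |\Phi_{A'}(\mathbb{F}_\p)| = c_\p(A')$, so $c_\p(\phi) = 1$.

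The step I expect to be the main obstacle is verifying that $\Phi_A$ and $\Phi_{A'}$ are $2$-groups. For elliptic curves at primes of residue characteristic $\ge 5$ this is elementary, since the Kodaira type at $\p$ must be either good reduction or $\IZS$ (whose component group is $(\Z/2)^2$). In the general abelian-variety setting one either invokes Grothendieck's monodromy filtration from SGA 7, or uses a norm/base-change argument: the composition of base change $A \to \mathrm{Res}_{K/F_\p}(A_K)$ with the norm acts as $[K:F_\p]$ on $\Phi_A$ but factors through $\Phi_{A_K} = 0$, yielding the desired annihilation in any residue characteristic.
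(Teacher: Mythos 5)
Your argument is correct, and it gives a genuinely self-contained proof where the paper simply cites Lemma~4.6 of Dokchitser--Dokchitser (with the remark that its proof works verbatim for abelian varieties). The underlying mechanism is surely similar, but you reconstruct it from scratch: reduce to odd prime degree $\ell$ coprime to the residue characteristic, observe good reduction passes to the isogenous twist by N\'eron--Ogg--Shafarevich, and then compare Tamagawa numbers via an explicit analysis of component groups. The content you correctly identify as the crux is the claim that $\Phi_A$ and $\Phi_{A'}$ are $2$-groups; your ``norm/base-change'' argument for it is sound, though the formulation ``factors through $\Phi_{A_K}=0$'' is a slight abuse --- what you really use is that the N\'eron model of $\mathrm{Res}_{K/F_\p}(A_K)$ is $\mathrm{Res}_{\O_K/\O_{F_\p}}$ of an abelian scheme (BLR~7.6.5), whose special fiber is a connected smooth group, hence has trivial component group. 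One should make that intermediate object explicit, since the component group that actually intervenes lives over $F_\p$, not $K$. Once $\Phi_A, \Phi_{A'}$ are $2$-groups, the existence of the complementary isogeny $\psi$ with $\psi\phi = [\ell]$ and $\phi\psi = [\ell]$ makes $\phi_*$ a Galois-equivariant isomorphism of component groups, and taking Frobenius invariants finishes the argument. So your route is longer but more transparent, and has the advantage of being visibly valid for abelian varieties of arbitrary dimension and at wild places, which is something the paper only asserts by fiat about the cited elliptic-curve proof.
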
 
\begin{proof}
This follows from Lemma 4.6 in \cite{DD} and the fact that $d$ is odd. While the result in \cite{DD} is only stated for elliptic curves, its proof nonetheless holds verbatim for abelian varieties of arbitrary dimension.
\end{proof}

As a consequence of Lemma \ref{lem:twistofgoodred}, we deduce:

\begin{corollary}\label{cor:loc-const}
Let $\phi\colon A \to A^\prime$ be an isogeny of odd degree $d$.  For any $s\in F^*/F^{*2}$, the value of $c(\phi_s)$ depends only on the class of $s \in \prod_{p\mid d\Delta_E \infty} F_\p^*/F_\p^{*2}$ and in particular is given by
\[
c(\phi_s) = \prod_{\p\mid d\Delta_A \infty} c_\p(\phi_s).
\]
\end{corollary}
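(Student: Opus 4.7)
The plan is to show directly that the only places $\p$ where $c_\p(\phi_s)$ can be nontrivial are those dividing $d\Delta_A\infty$, and then observe that for such places the local factor is obviously a function of $s$ modulo local squares.

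First I would reduce the claim about the infinite product to a finite one using Lemma \ref{lem:twistofgoodred}. Fix any $s \in F^*/F^{*2}$ and consider a finite prime $\p \nmid d\Delta_A$. Since $A$ has good reduction at $\p$, we observe that $A$ itself is a quadratic twist of $A_s$ (namely, $(A_s)_{s^{-1}} = A$ in $F^*/F^{*2}$). Hence $A_s$ admits a quadratic twist of good reduction at $\p$, and Lemma \ref{lem:twistofgoodred}, applied to the twisted isogeny $\phi_s \colon A_s \to A'_s$ (which still has odd degree $d$), yields $c_\p(\phi_s) = 1$. Consequently all but the finitely many places dividing $d\Delta_A\infty$ drop out of the defining product \eqref{eq:global_ratio_def}, giving the displayed formula
\[
c(\phi_s) = \prod_{\p \mid d\Delta_A\infty} c_\p(\phi_s).
\]

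Second, I would verify that each surviving local factor depends only on $s$ modulo squares in $F_\p^*$. This is immediate from the definition \eqref{eq:local_ratio_def}: the twist $A_s$ as an abelian variety over $F_\p$ is obtained by twisting through the quadratic character associated to $s \in F_\p^*/F_\p^{*2}$, so the $F_\p$-isomorphism class of the triple $(A_s, A'_s, \phi_s)$ depends only on the image of $s$ in $F_\p^*/F_\p^{*2}$, and hence so do $|A'_s(F_\p)/\phi_s(A_s(F_\p))|$ and $|A_s(F_\p)[\phi_s]|$. Taking the product over the finite set $\{\p : \p \mid d\Delta_A\infty\}$ then shows that $c(\phi_s)$ depends only on the class of $s$ in $\prod_{\p \mid d\Delta_A\infty} F_\p^*/F_\p^{*2}$, as claimed.

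There is essentially no obstacle here; the entire content of the corollary is packaged into Lemma \ref{lem:twistofgoodred}, and the only minor point requiring care is the bookkeeping identification of $A$ as a quadratic twist of $A_s$ so that the lemma applies to $\phi_s$ rather than merely to $\phi$.
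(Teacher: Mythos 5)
Your proof is correct and takes essentially the same route as the paper: both reduce to Lemma \ref{lem:twistofgoodred}, observing that for $\p \nmid d\Delta_A\infty$ the curve $A_s$ has a quadratic twist (namely $A$ itself) with good reduction, hence $c_\p(\phi_s)=1$. You spell out the additional (and routine) observation that each surviving local factor depends only on the local square class of $s$, which the paper leaves implicit.
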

\begin{proof}
If $\p\nmid d\Delta_A \infty$, then $A_s$ and $A'_s$ have quadratic twists of good reduction at $\p$.   By Lemma \ref{lem:twistofgoodred}, we therefore have $c_\p(\phi_s) = 1$ for all such primes. 
\end{proof}

When $\phi$ is of odd prime degree $\ell$, Corollary \ref{cor:loc-const} implies that the sets
\[
T_m(\phi) := \{ s \in F^*/F^{*2} : c(\phi_s) = \ell^m\}
\]
for $m\in\mathbb{Z}$ are defined by finitely many local conditions in the sense of \cite{BKLOS}.  In particular, they have positive density within $F^*/F^{*2}$ when they are non-empty.  Moreover, Corollary \ref{cor:selmer-size} shows that for all but finitely many $s \in T_m(\phi)$, we have the following lower bound on the size of the $\phi$-Selmer group: $|\mathrm{Sel}_\phi(A_s)| \geq \ell^m$.  

On the other hand, when $\ell=3$, the main results of \cite{BKLOS} allow us to control the average size of $\mathrm{Sel}_\phi(A_s)$ for $s\in T_m(\phi)$ and give an {\it upper bound} on its average rank:

\begin{theorem}\label{thm:selmer-average}
Let $\phi\colon A \to A^\prime$ be an isogeny of degree $3$ and for $m\in \mathbb{Z}$, let $T_m(\phi)$ be defined as above.  For any non-empty subset $\Sigma \subseteq T_m(\phi)$ defined by finitely many local conditions, the average size of $\mathrm{Sel}_\phi(A_s)$ for $s\in \Sigma$ is exactly $1 + 3^m$.  Moreover, if $A$ is an elliptic curve, the average rank of $A_s(F)$ for $s \in \Sigma$ is at most $|m| + 3^{-|m|}$.
\end{theorem}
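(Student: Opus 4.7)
The plan has two parts.

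\emph{The average Selmer size.} The identity $\mathrm{Avg}_{s \in \Sigma}\,|\mathrm{Sel}_\phi(A_s)| = 1 + 3^m$ is the main theorem of our earlier work \cite{BKLOS}, which parametrizes $3$-isogeny Selmer elements via orbits of a suitable representation and counts integral orbits of bounded invariants by geometry of numbers, thereby computing averages over any quadratic twist family defined by finitely many local conditions. Since $\Sigma \subseteq T_m(\phi)$ and $c(\phi_s) = 3^m$ is constant on $\Sigma$, that machinery applies and produces exactly $1 + c(\phi_s) = 1 + 3^m$.

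\emph{The rank bound.} Suppose now $A$ is an elliptic curve. Since $c(\phi_s) \cdot c(\hat\phi_s) = 1$ by Proposition \ref{prop:selmer-ratio}, I may replace $\phi$ by $\hat\phi$ if needed and assume $m \geq 0$. The long exact sequence coming from $[3] = \hat\phi \circ \phi$ yields
\[
3^{\mathrm{rk}(A_s)} \leq |\mathrm{Sel}_3(A_s)| \leq |\mathrm{Sel}_\phi(A_s)| \cdot |\mathrm{Sel}_{\hat\phi}(A'_s)|,
\]
and Proposition \ref{prop:selmer-ratio}, combined with the fact that only finitely many twists have rational $\phi_s$- or $\hat\phi_s$-torsion, gives $|\mathrm{Sel}_\phi(A_s)| = 3^m \cdot |\mathrm{Sel}_{\hat\phi}(A'_s)|$ for almost all $s \in \Sigma$. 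Writing $Y_s := |\mathrm{Sel}_{\hat\phi}(A'_s)|$, this produces the pointwise bound $\mathrm{rk}(A_s) \leq m + 2\log_3 Y_s$. I would then average, apply the elementary inequality $\log_3 n \leq (n-1)/2$ valid for $n$ a positive power of $3$, and invoke the first part of the theorem applied to $\hat\phi$ in place of $\phi$ (giving $\mathrm{Avg}_{s \in \Sigma}\, Y_s = 1 + 3^{-m}$, as $\Sigma$ corresponds under this reindexing to a set of finite local conditions inside $T_{-m}(\hat\phi)$) to conclude
\[
\mathrm{Avg}_{s \in \Sigma}\, \mathrm{rk}(A_s) \leq m + 2 \cdot \mathrm{Avg}_{s \in \Sigma}\, \log_3 Y_s \leq m + \mathrm{Avg}_{s \in \Sigma}(Y_s - 1) = m + 3^{-m}.
\]

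\emph{Main obstacle.} The substantive difficulty is entirely in the first part: obtaining the exact average $1 + 3^m$ requires the full orbit-counting apparatus of \cite{BKLOS}, which is far from routine. The rank bound, by contrast, is a short formal consequence of the first part combined with Proposition \ref{prop:selmer-ratio} and the elementary logarithm inequality; the only subtle point is the observation that using the Selmer-ratio identity to eliminate $|\mathrm{Sel}_\phi(A_s)|$ in favor of $Y_s$ leaves a single factor of $Y_s - 1$ in the average, which is precisely what allows the final bound to be $m + 3^{-m}$ rather than the weaker $m + 2 \cdot 3^{-m}$ that results from bounding the two Selmer groups independently.
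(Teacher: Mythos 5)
Your proof is correct and takes essentially the same route as the paper, which disposes of both parts by citing Theorems 2.1 and 2.4 of \cite{BKLOS}. You cite the first part and re-derive the second from it; the derivation — combining the exact sequence bound $3^{\mathrm{rk}(A_s)} \leq |\mathrm{Sel}_\phi(A_s)| \cdot |\mathrm{Sel}_{\hat\phi}(A'_s)| = 3^m Y_s^2$ (valid for all but finitely many $s$, using Proposition \ref{prop:selmer-ratio}) with the convexity bound $2\log_3 Y_s \leq Y_s - 1$ for $Y_s$ a power of $3$, and the first-part average $\mathrm{Avg}_{s\in\Sigma}\, Y_s = 1 + 3^{-m}$ — is clean and almost surely mirrors how \cite{BKLOS} itself obtains its Theorem 2.4. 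The one detail worth making explicit: when you reduce to $m \geq 0$ by swapping $\phi$ for $\hat\phi$, the argument bounds $\mathrm{rk}(A'_s)$ rather than $\mathrm{rk}(A_s)$; these coincide because $A_s$ and $A'_s$ are isogenous, but that should be said.
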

\begin{proof}
This is a combination of Theorems 2.1 and 2.4 in \cite{BKLOS}.
\end{proof}

In particular, when $A=E$ is an elliptic curve, the upper bound on the average rank of the twists $E_s(F)$ for $s\in T_m(\phi)$ implies that a positive proportion of $s \in T_m(\phi)$ have rank at most $|m|$.  Pulling together all of the results of this section, we obtain the following key proposition.

\begin{proposition}\label{prop:sha-elements}
Let $E$ be an elliptic curve over a number field $F$.  Suppose that $\phi\colon E \to E^\prime$ and $\psi\colon E \to E^{\prime\prime}$ are isogenies over $F$ of degree $3$ and degree $\ell$ prime respectively and further assume that $E[\phi] \cap E[\psi] = 0$ in the special case that $\ell =3$.  Let $m$ and $n$ be integers such that $m > |n| \geq 0$ and for which $T_{-m}(\psi) \cap T_n(\phi) \neq \emptyset$.  Then a positive proportion of $s \in T_{-m}(\psi) \cap T_n(\phi)$ have $|\Sha(E'_s)[\ell]| \geq \ell^{m-n}$.
\end{proposition}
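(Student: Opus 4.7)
The plan is to exploit the two independent isogenies by combining the rank control provided by the 3-isogeny $\phi$ via Theorem~\ref{thm:selmer-average} with the large Selmer group produced by the $\ell$-isogeny $\psi$ via Proposition~\ref{prop:selmer-ratio}. First, $\Sigma := T_{-m}(\psi) \cap T_n(\phi)$ is cut out by finitely many local conditions, as the intersection of two such sets (Corollary~\ref{cor:loc-const}), and therefore has positive density in $F^\times/F^{\times 2}$ since it is nonempty by hypothesis. Applying Theorem~\ref{thm:selmer-average} to $\phi$ and the subset $\Sigma \subseteq T_n(\phi)$, the average rank of $E_s(F)$ for $s \in \Sigma$ is at most $|n| + 3^{-|n|} < |n|+1$; hence a positive proportion $\Sigma_0 \subseteq \Sigma$ of twists satisfy $\mathrm{rank}\, E_s(F) \leq |n|$, and the same bound holds for the isogenous $E^\prime_s$.

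Next, the hypothesis $E[\phi] \cap E[\psi] = 0$ (automatic when $\ell \neq 3$) ensures that $\phi(E[\psi]) \subset E^\prime$ is a Galois-stable subgroup of order $\ell$, giving an induced $\ell$-isogeny $\psi^\prime\colon E^\prime \to E^{\prime\prime\prime}$, where $E^{\prime\prime\prime} := E/(E[\phi] + E[\psi])$. Together with the induced 3-isogeny $\phi^\prime\colon E^{\prime\prime} \to E^{\prime\prime\prime}$ (whose kernel is $\psi(E[\phi])$), these fit into a commutative square $\psi^\prime \circ \phi = \phi^\prime \circ \psi$. Multiplicativity of Selmer ratios on this square gives
\[
c(\psi^\prime_s)\, c(\phi_s) \;=\; c(\phi^\prime_s)\, c(\psi_s).
\]
When $\ell \neq 3$, the constraint that $c(\psi^\prime_s)$ is a power of $\ell$ while $c(\phi^\prime_s), c(\phi_s)$ are powers of $3$ forces $c(\psi^\prime_s) = \ell^{-m}$; the case $\ell = 3$ is handled by a direct computation via Proposition~\ref{prop:local-selmer-comp} combined with the independence assumption. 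In either case $c(\hat{\psi^\prime}_s) = \ell^m$, so Corollary~\ref{cor:selmer-size} gives $|\mathrm{Sel}_{\hat{\psi^\prime}_s}(E^{\prime\prime\prime}_s)| \geq \ell^m$ for all but finitely many $s$.

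To conclude, I plug this into the exact sequence
\[
0 \to E^\prime_s(F)/\hat{\psi^\prime}_s(E^{\prime\prime\prime}_s(F)) \to \mathrm{Sel}_{\hat{\psi^\prime}_s}(E^{\prime\prime\prime}_s) \to \Sha(E^{\prime\prime\prime}_s)[\hat{\psi^\prime}_s] \to 0.
\]
For $s \in \Sigma_0$, generic triviality of odd-degree torsion yields $|E^\prime_s(F)/\hat{\psi^\prime}_s(E^{\prime\prime\prime}_s(F))| \leq \ell^{|n|}$, so $|\Sha(E^{\prime\prime\prime}_s)[\hat{\psi^\prime}_s]| \geq \ell^{m-|n|}$. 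Transferring this lower bound to $|\Sha(E^\prime_s)[\ell]|$ along the $\ell$-isogeny $\psi^\prime_s$ (via Cassels' formula on $\ell$-primary parts of Sha, or a direct comparison of the $\ell$- and $\psi^\prime_s$-Selmer groups of $E^\prime_s$) then yields the claimed bound $|\Sha(E^\prime_s)[\ell]| \geq \ell^{m-n}$.

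The main obstacle I anticipate is this last transfer step, translating a lower bound on $\Sha(E^{\prime\prime\prime}_s)[\hat{\psi^\prime}_s]$ into one on $\Sha(E^\prime_s)[\ell]$; in particular, tracking cokernels of $\psi^\prime_{s,*}$ on $\Sha[\ell^\infty]$ and teasing out the correct exponent. The case $\ell = 3$ is further complicated because $\phi$ and $\psi$ are both 3-isogenies, so the clean arithmetic separation between the $3$- and $\ell$-parts of the Selmer ratios in the multiplicativity equation breaks down, and one must appeal to a finer local analysis informed by the independence $E[\phi] \cap E[\psi] = 0$.
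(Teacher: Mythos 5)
Your overall framework is right: use Theorem~\ref{thm:selmer-average} applied to $\phi$ for rank control, and Corollary~\ref{cor:selmer-size} applied to the dual of a degree-$\ell$ isogeny to force a large Selmer group, then compare via the sequence~\eqref{selmersequence}. However, the detour through $E^{\prime\prime\prime}=E/(E[\phi]+E[\psi])$ is both unnecessary and the source of your two unresolved gaps. The clean route uses $\hat\psi_s\colon E^{\prime\prime}_s\to E_s$ directly. Since $c(\psi)c(\hat\psi)=c([\ell])=1$ (by Proposition~\ref{prop:selmer-ratio} applied to $\psi$ and $\hat\psi$), one has $c(\hat\psi_s)=\ell^m$ on $\Sigma:=T_{-m}(\psi)\cap T_n(\phi)$, whence $|\mathrm{Sel}_{\hat\psi_s}(E^{\prime\prime}_s)|\geq\ell^m$ for all but finitely many $s$ by Corollary~\ref{cor:selmer-size}. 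Feeding this into~\eqref{selmersequence} for $\hat\psi_s$, and using that $|E_s(F)/\hat\psi_s(E^{\prime\prime}_s(F))|\leq|E_s(F)/\ell E_s(F)|=\ell^{\mathrm{rk}\,E_s(F)}$ once $E_s(F)[\ell]=0$ (all but finitely many $s$), gives $|\Sha(E^{\prime\prime}_s)[\hat\psi_s]|\geq \ell^{m-\mathrm{rk}\,E_s(F)}$. Combined with the positive-proportion rank bound $\mathrm{rk}\,E_s(F)\leq|n|$ from Theorem~\ref{thm:selmer-average} (which you derived correctly), and the inclusion $\Sha(E^{\prime\prime}_s)[\hat\psi_s]\subseteq\Sha(E^{\prime\prime}_s)[\ell]$, this yields $|\Sha(E^{\prime\prime}_s)[\ell]|\geq\ell^{m-|n|}$ for a positive proportion of $s\in\Sigma$. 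Note this is a statement about the codomain $E^{\prime\prime}$ of $\psi$, with exponent $m-|n|$; this is what the paper's later applications actually use (e.g.\ the quantity $|t_1|-|t_2|$ in Theorem~\ref{lem:9-isog-sha}), so you should read the proposition's $E'_s$ and $m-n$ in that light rather than try to force a literal bound on $\Sha(E'_s)$.

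Two steps in your $E^{\prime\prime\prime}$-route would not survive scrutiny. First, your claim $c(\psi^\prime_s)=\ell^{-m}$ is only justified for $\ell\neq 3$; for $\ell=3$ the relation $c(\psi^\prime_s)c(\phi_s)=c(\phi^\prime_s)c(\psi_s)$ merely gives $c(\psi^\prime_s)=c(\psi_s)\cdot c(\phi^\prime_s)/c(\phi_s)$, and there is no reason for the correction factor $c(\phi^\prime_s)/c(\phi_s)$ to be trivial --- these depend on the Tamagawa numbers of four different curves. Second, and more fundamentally, the terminal ``transfer'' from $\Sha(E^{\prime\prime\prime}_s)[\ell]$ to $\Sha(E'_s)[\ell]$ cannot be carried out in the direction you need: Cassels' isogeny-invariance formula relates $|\Sha(E'_s)|$ and $|\Sha(E^{\prime\prime\prime}_s)|$ through precisely the local factors that make $c(\psi^\prime_s)=\ell^{-m}$ small, and (for rank $r\leq|n|<m$) would make $\Sha(E'_s)$ \emph{smaller}, not larger. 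This is consistent with the direct computation: the curve whose Tate--Shafarevich group is forced to be large by this method is the one \emph{downstream} of the degree-$\ell$ isogeny with small Selmer ratio, not the source.
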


In the special case that $\ell = 3$, the discrepancy between the two Selmer groups allows us to strengthen the rank bounds provided by Theorem \ref{thm:selmer-average}.

\begin{corollary}\label{cor:ranks}
Let $\phi\colon E \to E^\prime$ and $\psi\colon E \to E^{\prime\prime}$ be independent $3$-isogenies.  If $T_m(\phi) \cap T_n(\psi)$ is non-empty, then the average rank of $E_s(F)$ for $s \in T_m(\phi) \cap T_n(\psi)$ is at most 
\[\min(|m|,|n|)+3^{-\min(|m|,|n|)}.\]
\end{corollary}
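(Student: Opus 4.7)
The plan is to apply Theorem \ref{thm:selmer-average} to each of $\phi$ and $\psi$ separately, using the common averaging set
\[
\Sigma := T_m(\phi) \cap T_n(\psi),
\]
and then to combine the two resulting bounds. By Corollary \ref{cor:loc-const}, each of $T_m(\phi)$ and $T_n(\psi)$ is defined by finitely many local conditions in $F^*/F^{*2}$, so their intersection $\Sigma$ is as well. Since $\Sigma \subseteq T_m(\phi)$, Theorem \ref{thm:selmer-average} applied to the isogeny $\phi$ yields that the average rank of $E_s(F)$ for $s \in \Sigma$ is at most $|m| + 3^{-|m|}$. Applying the same theorem instead to $\psi$, using $\Sigma \subseteq T_n(\psi)$, gives the competing bound $|n| + 3^{-|n|}$. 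The average rank over $\Sigma$ is therefore at most the minimum of these two quantities.

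To identify this minimum with the form stated in the corollary, one observes that the function $f(k) := k + 3^{-k}$ is strictly increasing on the non-negative integers: indeed, $f(k+1) - f(k) = 1 - \tfrac{2}{3}\cdot 3^{-k} > 0$ for every $k \geq 0$. Consequently,
\[
\min\bigl(|m| + 3^{-|m|},\, |n| + 3^{-|n|}\bigr) \;=\; \min(|m|, |n|) + 3^{-\min(|m|, |n|)},
\]
which is the desired bound.

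The argument presents no serious obstacle: the proof reduces to two applications of Theorem \ref{thm:selmer-average} on the intersection $\Sigma$, followed by an elementary monotonicity check. The role of the independence hypothesis on $\phi$ and $\psi$ is not in either individual bound but rather in ensuring that the setup is meaningful --- if one had $\phi = \psi$, then $T_m(\phi) \cap T_n(\psi)$ would be empty unless $m = n$, in which case the claim is trivial. The substantive content is conceptual: the two independent $3$-isogenies provide two independent Selmer-theoretic handles on the rank, and one is free to use whichever gives the sharper control.
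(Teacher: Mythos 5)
Your proof is correct and follows the same approach as the paper's (very terse) argument, which simply says to apply Theorem~\ref{thm:selmer-average} with whichever of $\phi$, $\psi$ gives the stronger bound; you helpfully spell out the monotonicity of $k \mapsto k + 3^{-k}$ on nonnegative integers, which the paper leaves implicit, to turn the minimum of the two bounds into the stated expression.
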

\begin{proof}
We apply Theorem \ref{thm:selmer-average} with whichever choice of $\phi$ and $\psi$ yields a strong bound.
\end{proof}

\begin{remark} \rm
We give examples of elliptic curves for which Corollary \ref{cor:ranks} implies an improved rank bound in Section \ref{sec:examples}.
\end{remark}



\section{Local Selmer ratios for curves with two independent $3$-isogenies}\label{sec:two-isogenies}

Let $F$ be a number field and $E$ an elliptic curve with a pair of independent 3-isogenies $\phi_1:E \rightarrow E^\prime$ and $\phi_2:E \rightarrow E^{\prime\prime}$ over $F$. 
By the non-degeneracy of the Weil pairing, $E$ is a quadratic twist of a curve $E_s$ with $E_s[3] \simeq \Z/3\Z \times \mu_3$.  This immediately implies:
\begin{lemma}
\label{lem:duality}
Suppose $E$ has two independent $3$-isogenies $\phi_1:E \rightarrow E^\prime$ and $\phi_2:E \rightarrow E^{\prime\prime}$.  Then
\begin{enumerate}[$(i)$]
\item Some twist of $E$ obtains full rational $3$-torsion over $F(\zeta_3)$.
\item The groups $E[\phi_1]$ and $E[\phi_2]$ are Cartier dual.
\end{enumerate}
\end{lemma}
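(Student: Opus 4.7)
The plan is to describe the action of $G_F := \Gal(\overline{F}/F)$ on $E[3]$ explicitly as a sum of two $\F_3^\times$-valued characters (one per kernel), use the Weil pairing to relate them, and then read off both (i) and (ii). Since $\phi_1$ and $\phi_2$ are independent $F$-rational $3$-isogenies, the kernels $E[\phi_1]$ and $E[\phi_2]$ are distinct Galois-stable $\F_3$-lines in the two-dimensional $\F_3$-space $E[3]$, so $E[3] = E[\phi_1] \oplus E[\phi_2]$ as $\F_3[G_F]$-modules. The action on each line is given by a character $\chi_i : G_F \to \F_3^\times = \{\pm 1\}$, which is quadratic and so, by Kummer theory, corresponds to a class $s_i \in F^\times/F^{\times 2}$.

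Next, I would use the Weil pairing to identify $\wedge^2 E[3] \cong \mu_3$ as Galois modules, which translates to the character identity $\chi_1 \chi_2 = \omega$, where $\omega$ is the mod-$3$ cyclotomic character. Assertion (ii) is then immediate: $\chi_2 = \omega \chi_1^{-1}$ is exactly the character describing $\Hom(E[\phi_1], \mu_3)$, so $E[\phi_2]$ is Cartier dual to $E[\phi_1]$.

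For (i), I would consider $E_{s_1}$, the quadratic twist of $E$ by $s_1$. Since $3$ is odd, $E_{s_1}[3]$ is $E[3]$ tensored with $\chi_1$ as a $G_F$-module, so it decomposes as a direct sum of the characters $\chi_1^2 = 1$ and $\chi_1\chi_2 = \omega$; thus $E_{s_1}[3] \cong \Z/3\Z \oplus \mu_3$, which becomes trivial after restriction to $G_{F(\zeta_3)}$. I do not foresee any real obstacle: the argument is formal once the character decomposition and the Weil-pairing relation are in hand, the key coincidence being that $\F_3^\times = \{\pm 1\}$ forces each $\chi_i$ to be quadratic and hence to come from a genuine quadratic twist.
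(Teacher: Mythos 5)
Your proposal is correct and unpacks exactly the argument the paper compresses into its one-line observation: the non-degeneracy of the Weil pairing forces $\chi_1\chi_2 = \omega$ on the two Galois-stable lines, $\F_3^\times = \{\pm 1\}$ makes each $\chi_i$ a quadratic character, and twisting by $\chi_1$ yields a curve with $E_{s_1}[3] \cong \Z/3\Z \oplus \mu_3$, from which both (i) and (ii) are immediate. Same approach, just with the details filled in.
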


Part (i) of Lemma \ref{lem:duality} has the following important corollary.

\begin{lemma}
\label{lem:9isog good twist}
If $E$ has additive, potentially good reduction at a finite prime $\p \nmid 3$ of $F$, then $E$ has a twist $E_s$ with good reduction at $\p$.  In particular, $c_\p(\phi_i) = 1$ for $i = 1,2$. 
\end{lemma}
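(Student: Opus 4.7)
The plan is to produce the good-reduction twist via Lemma \ref{lem:duality}(i) and then read off the Selmer-ratio statement from Lemma \ref{lem:twistofgoodred}. By Lemma \ref{lem:duality}(i) there exists $s \in F^*/F^{*2}$ such that $E_s[3] \subseteq E_s(F(\zeta_3))$. Since $F(\zeta_3)/F$ is ramified only above primes dividing $3$, and $\p \nmid 3$ by hypothesis, this extension is unramified at $\p$. Therefore the inertia subgroup $I_\p \subset \Gal(\overline{F}/F)$ is contained in $\Gal(\overline{F}/F(\zeta_3))$, and so $I_\p$ acts trivially on $E_s[3]$.

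Next, I would argue that this forces $E_s$ to have \emph{good} reduction at $\p$, not merely potentially good reduction. Since $E_s$ is a quadratic twist of $E$, it has the same $j$-invariant and hence inherits the property of potentially good reduction at $\p$. Consequently, the image $\Phi$ of $I_\p$ in $\Aut(T_3(E_s))$ is a finite subgroup of $GL_2(\Z_3)$, and by the previous paragraph $\Phi$ lies in the kernel of the reduction map $GL_2(\Z_3) \to GL_2(\F_3)$. The main technical point is the standard fact that this principal congruence subgroup $I + 3M_2(\Z_3)$ is torsion-free: if $g = I + 3A$ satisfies $g^n = I$, expanding $g^n$ by the binomial theorem and reducing modulo successively higher powers of $3$ forces $A = 0$. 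Hence $\Phi = 1$, and by the N\'eron--Ogg--Shafarevich criterion $E_s$ has good reduction at $\p$.

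Finally, for the second assertion, note that each $\phi_i$ has odd degree $3$, that $\p \nmid 3$, and that we have just exhibited a quadratic twist $E_s$ of $E$ with good reduction at $\p$. Lemma \ref{lem:twistofgoodred} then yields $c_\p(\phi_i) = 1$ directly for $i = 1, 2$.

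The only step requiring any care is the torsion-freeness of the principal congruence subgroup at level $3$, but this is classical; everything else is a direct application of earlier results in the paper. I expect no other obstacles.
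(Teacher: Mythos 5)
Your proof is correct and takes essentially the same approach as the paper: invoke Lemma~\ref{lem:duality}(i) to get a twist with $3$-torsion rational over $F(\zeta_3)$, use the unramifiedness of $F(\zeta_3)/F$ at $\p \nmid 3$ together with potential good reduction to upgrade to genuine good reduction at $\p$, and then apply Lemma~\ref{lem:twistofgoodred}. The only difference is cosmetic: the paper cites Serre--Tate (\S 2) for the implication ``trivial inertia action on $E_s[3]$ plus potential good reduction implies good reduction,'' and passes through $F_\p(\zeta_3)$ before descending by unramifiedness, whereas you unwind that citation and prove the ingredient directly via torsion-freeness of the principal congruence subgroup $I + 3M_2(\Z_3)$ and N\'eron--Ogg--Shafarevich. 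Both are valid; yours is self-contained, the paper's is shorter.
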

\begin{proof}
By Lemma \ref{lem:duality}(i), $E$ has some twist $E_s$ with full rational 3-torsion over the extension $F_\p(\zeta_3)$.  It therefore has good reduction over $F_\p(\zeta_3)$ \cite[\S2]{serre-tate}. Since $F_\p(\zeta_3)$ is unramified over $F_\p$, $E_s$ must have good reduction over $F_\p$.   It then follows from Lemma \ref{lem:twistofgoodred} that $c_\p(\phi_i) = 1$. 
\end{proof}

The story when $E$ has multiplicative or additive potential multiplicative reduction at $\p$ is relatively straightforward as well.  For this, we use the {\it Hesse model}.

\begin{lemma}\label{lem:9-isogeny-param}
If $E$ is an elliptic curve over $F$ with $E[3] \simeq \Z/3\Z \times \mu_3$, then there are $u$ and $v$ in $K$, with $v/u \notin \{0,1, \zeta_3, \zeta_3^2\}$ such that $E$ is isomorphic to 
\[
E_{u,v}\colon v(x^3 + y^3 + z^3) = 3uxyz.
\]
In this model, $E_{u,v}(\overline{F})[3]$ is generated by $(1:-1:0)$ and $(\zeta_3:-\zeta_3^2:0)$. Moreover, if $E'$ and $E''$ are the quotients of $E$ by the corresponding subgroups of order $3$, we have
\begin{align*}
j(E') &= \frac{27u^3(9u^3 - 8v^3)^3}{v^9(u-v)(u^2+uv+v^2)}, \\ \smallskip
j(E) &= \frac{27u^3(u+2v)^3(u^2-2uv + 4v^2)^3}{v^3(u-v)^3(u^2+uv+v^2)^3}, \quad \text{ and} \\ \smallskip
j(E^{\prime\prime}) &= \frac{3(u+2v)^3(u^3 + 78u^2v + 84uv^2 + 80v^3)^3}{v(u-v)^9(u^2+uv+v^2)}.
\end{align*}
\end{lemma}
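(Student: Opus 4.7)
\emph{Plan.} My proof would proceed in three stages: first put $E$ in Hesse form, then verify that the two listed points really are the claimed generators of $E_{u,v}[3]$, and finally compute the three $j$-invariants. The hypothesis $E[3]\cong \Z/3\Z \times \mu_3$ implies that $E$ acquires full rational $3$-torsion over $F(\zeta_3)$, and a classical theorem then says that any such curve admits a plane model $v(x^3+y^3+z^3)=3uxyz$, with the nine inflections realizing $E[3]$; smoothness fails exactly when $v(u^3-v^3)=0$, i.e.\ when $v/u \in \{0,1,\zeta_3,\zeta_3^2\}$. To descend the model from $F(\zeta_3)$ back to $F$, I would use that the two $F$-rational cyclic subgroups of $E[3]$ isomorphic to $\Z/3\Z$ and $\mu_3$ match, respectively, the $F$-rational subgroups $\langle(1:-1:0)\rangle$ and $\langle(\zeta_3:-\zeta_3^2:0)\rangle$ inside the Hesse model, which pins down $(u,v)$ in $F$ up to simultaneous scaling.

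\emph{Flexes and $j$-invariants.} A direct Jacobian computation shows that $(1:-1:0)$ and $(\zeta_3:-\zeta_3^2:0)$ are inflection points of the cubic (they lie with $(1:-\zeta_3^2:0)$ on the line $z=0$), and the remaining six flexes are obtained by cyclic permutation of coordinates; taking one flex as the origin, these nine points are $E_{u,v}[3]$. The two cyclic subgroups generated by our chosen flexes are distinct and Cartier-dual by Lemma \ref{lem:duality}(ii). The $j$-invariant of the Hesse cubic $x^3+y^3+z^3 = 3\mu xyz$ is the classical rational function $j(\mu) = 27\mu^3(\mu^3+8)^3/(\mu^3-1)^3$, derivable via any standard transformation to Weierstrass form (the denominator has its triple root exactly at the degenerate Hesse fibers); substituting $\mu = u/v$, homogenizing, and factoring $u^3+8v^3$ and $u^3-v^3$ gives $j(E)$ in the form stated. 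For $j(E')$ and $j(E'')$ I would apply V\'elu's formulas to the subgroups $\langle(1:-1:0)\rangle$ and $\langle(\zeta_3:-\zeta_3^2:0)\rangle$ respectively, reducing the problem to a finite polynomial computation.

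\emph{Expected obstacle.} The only genuinely laborious part of the proof is this V\'elu computation, and the cleanest route to the specific factorizations asserted in the lemma is to carry it out in a computer algebra system and then verify the resulting rational functions by comparing poles and their orders against the four cusps $v/u\in\{0,1,\zeta_3,\zeta_3^2\}$ of the Hesse family. The visible asymmetry between $j(E')$ and $j(E'')$ reflects the \'etale-vs-multiplicative distinction between the two chosen subgroups: the Hesse parameterization treats them symmetrically over $F(\zeta_3)$, but not over $F$, which is precisely what makes the two $3$-isogenies $\phi_1,\phi_2$ inequivalent and underpins the Selmer-ratio arguments in the following sections.
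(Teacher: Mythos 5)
Your proposal matches the paper's proof in substance: the paper simply cites the Hesse model as classical (via Rubin–Silverberg) and defers the rest to a symbolic computation in Magma, which is exactly what you propose, albeit with more explanatory scaffolding (flexes, Vélu's formulas, the verification $j(\mu)=27\mu^3(\mu^3+8)^3/(\mu^3-1)^3$ for $\mu=u/v$). Your added remarks on descent to $F$, the collinear flexes on $z=0$, and the asymmetry between the two $3$-isogenies are all consistent with the paper and correct.
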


\begin{proof}
The Hesse model is classical (see \cite{RubinSilverberg}, e.g.), and the rest follows from a symbolic computation in \verb^Magma^. 
\end{proof}

\begin{lemma}
\label{lem:9isog mult vals}
Suppose that $E$ has $($potential$)$ multiplicative reduction at $\p$, and write $v(\cdot)$ for the $\p$-valuation.  Let $j$, $j^\prime$, and $j^{\prime \prime}$ denote the $j$-invariants of $E$, $E^\prime$, and $E^{\prime\prime}$, respectively.  Then one of the following holds:
\begin{enumerate}[$(i)$]
\item $v(j) = 3v(j^\prime) $ and $3v(j) = v(j^{\prime\prime})$, 
\item $v(j) = 3v(j^\prime) $ and $v(j) = 3v(j^{\prime\prime})$, or 
\item $3v(j) = v(j^\prime) $ and $v(j) = 3v(j^{\prime\prime})$. 
\end{enumerate}
Moreover, $(ii)$ can only occur if $\mu_3 \subset F_\p$. 
\end{lemma}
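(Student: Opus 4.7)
I plan to prove this via the Tate uniformization. Since the three $j$-invariants are invariant under quadratic twist, I may replace $E$ by its twist having split multiplicative reduction at $\p$ and work with $E \cong \G_m/q^\Z$ over $F_\p$, where $v_\p(q) = -v_\p(j)$. The four subgroups of order three in $E[3]$ are $\mu_3 \subset \G_m$ together with $\langle \zeta_3^k q^{1/3}\rangle$ for $k \in \{0,1,2\}$. The quotient by $\mu_3$ is induced by the cube map on $\G_m$, so its target has Tate parameter $q^3$ and $j$-valuation $3v_\p(j)$; the quotient by $\langle \zeta_3^k q^{1/3}\rangle$ enlarges the lattice to $(\zeta_3^k q^{1/3})^\Z$, so its target has Tate parameter $\zeta_3^k q^{1/3}$ and $j$-valuation $v_\p(j)/3$.

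The kernels of $\phi_1$ and $\phi_2$ are two distinct Galois-stable members of this list of four subgroups. If $\mu_3$ occurs among them, then depending on which of $\phi_1$ or $\phi_2$ has that kernel, the pair $(v_\p(j'), v_\p(j''))$ matches case (i) or case (iii). Otherwise both kernels are of the form $\langle \zeta_3^k q^{1/3}\rangle$ for distinct $k$, placing us in case (ii).

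It remains to show that case (ii) forces $\mu_3 \subset F_\p$, which is the main technical point. Suppose $\mu_3 \not\subset F_\p$ and pick $\tau \in \Gal(\bar F_\p/F_\p)$ with $\tau(\zeta_3) = \zeta_3^{-1}$; write $\tau(q^{1/3}) = \zeta_3^m q^{1/3}$ for some $m \in \Z/3\Z$. Then $\tau(\zeta_3^k q^{1/3}) = \zeta_3^{m-k} q^{1/3}$, and $\tau$-stability of $\langle \zeta_3^k q^{1/3}\rangle$ requires this element to equal $\zeta_3^k q^{1/3}$ inside $\G_m/q^\Z$, forcing $k \equiv 2m \pmod{3}$. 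Hence $\tau$ fixes at most one of the three non-$\mu_3$ subgroups, contradicting the hypothesis of two distinct such kernels. The valuation computations themselves are immediate once the Tate uniformization is in hand, so the Galois-stability bookkeeping in this last step is where I would spend the most care.
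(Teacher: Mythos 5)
Your argument is correct, and it takes a genuinely different route from the paper's. The paper proves this lemma by brute force: it invokes the explicit Hesse-model parametrization of Lemma~\ref{lem:9-isogeny-param} and reads off the three cases from the factorization of the denominators of $j(E)$, $j(E')$, $j(E'')$ as rational functions in $(u:v)$, namely $v^3(u-v)^3(u^2+uv+v^2)^3$, $v^9(u-v)(u^2+uv+v^2)$, and $v(u-v)^9(u^2+uv+v^2)$; case (ii) corresponds exactly to primes dividing $u^2+uv+v^2$, and such a prime automatically has a cube root of unity in its residue field, giving $\mu_3 \subset F_\p$. Your proof instead works directly with the Tate uniformization $E \cong \G_m/q^\Z$ after twisting to split multiplicative reduction (harmless since $j$-invariants and Galois-stability of kernel subgroups are twist-invariant), classifying the four order-3 subgroups as $\mu_3$ and the three $\langle \zeta_3^k q^{1/3}\rangle$, reading off the valuation of the Tate parameter of each quotient, and then handling the $\mu_3 \subset F_\p$ claim by a direct computation with $\tau(\zeta_3) = \zeta_3^{-1}$ showing $\tau$ can stabilize at most one non-$\mu_3$ subgroup. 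This is more conceptual and does not rely on the particular genus-zero parametrization of the moduli problem; the paper's approach has the minor advantage of producing the three cases and their Galois-theoretic characterization simultaneously from one factorization, which feeds naturally into Sections~\ref{9-isogeny} and~\ref{18-isogeny} where those factors reappear. One small elision in your write-up: when you assert that $\tau$-stability of $\langle \zeta_3^k q^{1/3}\rangle$ forces $\tau(\zeta_3^k q^{1/3}) = \zeta_3^k q^{1/3}$ (rather than its square), this uses that the square $\zeta_3^{2k}q^{2/3}$ has a different valuation modulo $v(q)\Z$; worth a sentence, but not a gap.
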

\begin{proof}
That only the listed possibilities occur follows from examining the denominators of the $j$-invariants of $E$, $E^\prime$, and $E^{\prime\prime}$ in the Hesse model. This examination also shows that the case $v(j) = 3v(j^\prime) $ and $v(j) = 3v(j^{\prime\prime})$ occurs if and only if $v(t^2 + t + 1) \ne 0$, which implies $\mu_3 \subset F_\p$.
\end{proof}

For any 3-isogeny $\phi$, define the global log-Selmer ratio $t(\phi)$ and the local log-Selmer ratio $t_p(\phi)$, by $c(\phi) = 3^{t(\phi)}$ and $c_p(\phi) = 3^{t_p(\phi)}$.  Thus, $t(\phi) = \sum_{p \leq \infty} t_p(\phi)$.  
 
\begin{lemma}\label{ratios mod 2}
If $F= \Q$, then
\begin{enumerate}[$(i)$]
\item \label{cong:1}$t(\phi_1) \equiv t(\phi_2) \pmod{2}$ and 
\item \label{cong:2}$t_p(\phi_1) \not\equiv t_p(\phi_2) \pmod{2}$ for $p \in \{3, \infty\}$.  
\end{enumerate}
\end{lemma}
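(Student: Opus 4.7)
The plan is to derive (i) directly from Proposition \ref{prop:tamagawaparity}, handle the infinite place in (ii) using Cartier duality, and then leverage (i) together with the infinite-place calculation to pin down the parity at $p = 3$. For (i), both $\phi_1$ and $\phi_2$ have source $E$ and prime degree $3$, so Proposition \ref{prop:tamagawaparity} yields $\dim_{\F_3}\mathrm{Sel}_3(E) \equiv t(\phi_i) + \dim_{\F_3} E(\Q)[3] \pmod 2$ for $i = 1, 2$; subtracting gives $t(\phi_1) \equiv t(\phi_2) \pmod 2$.

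For (ii) at $p = \infty$, Proposition \ref{prop:local-selmer-comp} gives $t_\infty(\phi_i) = -1$ or $0$ according as $E[\phi_i]$ is or is not contained in $E(\R)$. By Lemma \ref{lem:duality}(ii), $E[\phi_1]$ and $E[\phi_2]$ are Cartier dual one-dimensional $\F_3$-Galois modules, so the product of their characters equals the mod-$3$ cyclotomic character, on which complex conjugation acts by inversion. Since each character of $\Gal(\C/\R)$ takes values in $\{\pm 1\}$, exactly one of the two kernels must be fixed by complex conjugation, which yields $t_\infty(\phi_1) + t_\infty(\phi_2) = -1$ and in particular distinct parities.

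For (ii) at $p = 3$, I would combine (i) with the global identity $t(\phi_i) = \sum_v t_v(\phi_i)$: since the archimedean contributions differ mod $2$, it suffices to show $t_p(\phi_1) \equiv t_p(\phi_2) \pmod 2$ at every finite prime $p \ne 3$. This reduces to a brief case analysis by reduction type: good reduction and additive potentially-good reduction give both local ratios equal to $1$ (the latter by Lemma \ref{lem:9isog good twist}); non-split multiplicative and additive potentially-multiplicative reduction give both ratios equal to $1$ by Lemma \ref{lem:split-mult}(ii); and split multiplicative reduction follows from Lemma \ref{lem:split-mult}(i) together with the three valuation configurations enumerated in Lemma \ref{lem:9isog mult vals}, each of which puts $(t_p(\phi_1), t_p(\phi_2))$ into $\{(-1,1),(-1,-1),(1,-1)\}$, whose coordinates agree mod $2$. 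The only genuinely new ingredient is the archimedean argument via Cartier duality; everything else is bookkeeping with the local lemmas already assembled in Section \ref{sec:two-isogenies}.
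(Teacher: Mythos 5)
Your proposal is correct and follows essentially the same route as the paper: derive (i) from Proposition \ref{prop:tamagawaparity}, show the local parities agree at every finite $p \ne 3$ via the reduction-type case analysis, use Cartier duality (Lemma \ref{lem:duality}(ii)) with Proposition \ref{prop:local-selmer-comp} to establish opposite parities at $\infty$, and then deduce the $p=3$ case from the global congruence (i). The only difference is that you spell out the archimedean Cartier-duality argument in detail, whereas the paper simply cites the two results; the content is the same.
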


\begin{proof}
By Proposition \ref{prop:tamagawaparity}, both sides of Congruence \eqref{cong:1} are equal to the parity of $\dim_{\F_3} \Sel_3(E) -\dim_{\F_3}E(\QQ)[3]$ and hence must be the same.

Lemma \ref{lem:twistofgoodred} and Proposition \ref{prop:local-selmer-comp} show that $t_p(\phi_1) \equiv t_p(\phi_2) \pmod 2$ for all primes $p \nmid 3$ where $E$ has additive, potentially good reduction and Lemmas \ref{lem:split-mult} and  \ref{lem:9isog mult vals} show that $t_p(\phi_1) \equiv t_p(\phi_2) \pmod 2$ for all prime $p\nmid 3$ where $E$ has (potential) multiplicative reduction. Congruence \eqref{cong:1} then gives \begin{equation}\label{eq:t3plustinfty}t_3(\phi_1) + t_\infty(\phi_1) \equiv t_3 + t_\infty(\phi_2) \pmod 2.\end{equation} 

However, by Proposition \ref{prop:local-selmer-comp} combined with Lemma \ref{lem:duality}, we have $t_\infty(\phi_1) \not\equiv t_\infty(\phi_2) \pmod 2$.  Combining this with \eqref{eq:t3plustinfty}, 
we then get that $t_3(\phi_1) \not\equiv t_3(\phi_2) \pmod 2$. 
\end{proof}

Let $\phi_{1,s} \colon E_s \to E'_s$ and $\phi_{2,s} \colon E_s \to E''_s$ denote the isogenies on the twists induced by $\phi_1$ and $\phi_2$.

\begin{lemma}\label{lem:two-isogenies-trivial}
Suppose $F = \Q$.  Then for each prime $p\neq 3$, there exists $s \in \mathbb{Q}_p^*/\mathbb{Q}_p^{*2}$ such that $c_p(\phi_{1,s}) = c_p(\phi_{2,s}) = 1$.  There also exists some $s \in \Q_3^\times/\Q_3^{\times2}$ for which there is an equality of sets
\[\{c_3(\phi_{1,s}), c_3(\phi_{2,s})\} = \{1,3\}.\] 
\end{lemma}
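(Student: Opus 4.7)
For primes $p \neq 3$, the plan is a case analysis on the reduction type of $E$ at $p$. If $E$ has good reduction at $p$, the trivial class $s = 1$ works immediately by Lemma \ref{lem:twistofgoodred}. If $E$ has additive, potentially good reduction at $p$, then Lemma \ref{lem:9isog good twist} produces a twist with good reduction at $p$, and that $s$ satisfies $c_p(\phi_{1,s}) = c_p(\phi_{2,s}) = 1$. In the remaining case of (potentially) multiplicative reduction, the observation is that a quadratic twist $E_s$ can always be chosen so as to have non-split multiplicative reduction at $p$: starting from any twist already in the multiplicative regime, one toggles between split and non-split by twisting further by the unramified quadratic class in $\mathbb{Q}_p^*/\mathbb{Q}_p^{*2}$. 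For such an $s$, Lemma \ref{lem:split-mult}(ii) immediately yields $c_p(\phi_{1,s}) = c_p(\phi_{2,s}) = 1$ simultaneously, since the hypothesis there is a property of $E_s$ alone rather than of the chosen isogeny.

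For $p = 3$ the statement is more delicate. The first observation is that Lemma \ref{ratios mod 2}(ii) applies locally in the sense that, for every $s \in \mathbb{Q}_3^*/\mathbb{Q}_3^{*2}$ realized by a global quadratic character, the log-Selmer ratios $t_3(\phi_{1,s})$ and $t_3(\phi_{2,s})$ have opposite parities; hence the set $\{c_3(\phi_{1,s}), c_3(\phi_{2,s})\}$ always contains one even and one odd power of $3$. The task is to find a single twist for which these powers are $3^0 = 1$ and $3^1 = 3$. I would make this concrete by working in the Hesse parametrization of Lemma \ref{lem:9-isogeny-param}, in which the two kernel subgroups are explicit, and then computing each $c_3(\phi_{i,s})$ via Proposition \ref{prop:local-selmer-comp} as the product of a Tamagawa ratio $c_3(E'_s)/c_3(E_s)$ (read off from the $j$-invariants and Kodaira types determined via Lemma \ref{lem:9isog mult vals}) with a formal-group factor $\alpha_{\phi_i,3}$ (read off from the leading coefficient of the induced isogeny on formal groups).

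The hardest step is to show that among the four candidate twists in $\mathbb{Q}_3^*/\mathbb{Q}_3^{*2}$ at least one hits the minimal pair $\{1, 3\}$ rather than some larger $\{3^a, 3^{a+1}\}$ with $a \geq 1$. I would handle this by separating potentially multiplicative from potentially good reduction at $3$. In the potentially multiplicative subcase, an appropriate twist cancels the Tamagawa contribution by shrinking the component groups to the smallest possible size, leaving exactly one power of $3$ coming from the formal-group factor of one isogeny; the Cartier duality of $E[\phi_1]$ and $E[\phi_2]$ from Lemma \ref{lem:duality}(ii) then forces this power to appear in only one of $\phi_1,\phi_2$. In the potentially good subcase, Lemma \ref{lem:duality}(i) supplies a twist acquiring good reduction over the tame extension $\mathbb{Q}_3(\zeta_3)$; for such a twist the two Tamagawa ratios become trivial, and the residual $\alpha$-contributions of the two kernel lines split as $\{1, 3\}$ for exactly one further quadratic twist, which one identifies by direct inspection of the formal-group isogenies attached to the two lines in $E_s[3]$. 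The key technical check in either subcase is that the minimal discrepancy forced by Lemma \ref{ratios mod 2}(ii) is actually realized on the nose, rather than inflated by some unavoidable Tamagawa or ramification contribution.
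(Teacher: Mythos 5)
Your argument for primes $p \neq 3$ is correct and reaches the same conclusion by a genuinely different route. You run a case analysis on the potential reduction type of $E$ at $p$ — invoking Lemma~\ref{lem:twistofgoodred} for good reduction, Lemma~\ref{lem:9isog good twist} for additive potentially good reduction, and twisting to non-split multiplicative plus Lemma~\ref{lem:split-mult}(ii) for potentially multiplicative reduction. The paper instead gives a uniform argument: since the Galois action on each $E[\phi_{i,s}]$ is a quadratic character, there is at most one bad class in $\Q_p^*/\Q_p^{*2}$ for each, so one can pick $s$ with $E[\phi_{1,s}](\Q_p)=E[\phi_{2,s}](\Q_p)=0$; Cartier duality (Lemma~\ref{lem:duality}(ii)) then kills the dual kernels' rational points too, making all four local Selmer ratios positive integers, which must all be $1$ because $c_p(\phi_{i,s})c_p(\hat\phi_{i,s}) = 1$ when $p \neq 3$. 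Your approach is more elementary per case but needs Lemma~\ref{lem:split-mult} and Lemma~\ref{lem:9isog good twist}; the paper's approach works uniformly without inspecting reduction types. Both are fine.

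For $p = 3$, however, your argument has a genuine gap. You correctly deduce from Lemma~\ref{ratios mod 2}(ii) that $\{c_3(\phi_{1,s}),c_3(\phi_{2,s})\} = \{3^a,3^b\}$ with $a+b$ odd, and you correctly identify the hard step — pinning this down to $\{1,3\}$ rather than some other pair of opposite parity, including pairs with a negative exponent such as $\{1/3,1\}$. But you never close this step: you \emph{propose} an explicit Hesse-model computation of Tamagawa ratios and $\alpha$-factors without carrying it out, and the sketches you give are not sound. In the potentially-good subcase you assert that good reduction over $\Q_3(\zeta_3)$ makes ``the two Tamagawa ratios become trivial,'' but $\Q_3(\zeta_3)/\Q_3$ is ramified, so $E_s$ may still have additive reduction over $\Q_3$ with a nontrivial Tamagawa number; nothing you write rules this out. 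In the potentially-multiplicative subcase, ``an appropriate twist cancels the Tamagawa contribution'' is an unproved claim. The missing idea, which the paper uses, is the same Cartier-duality trick as at $p \neq 3$: choose $s$ so that all four of $E[\phi_{1,s}]$, $E[\phi_{2,s}]$, $E'[\hat\phi_{1,s}]$, $E''[\hat\phi_{2,s}]$ have no $\Q_3$-points, which makes all four local Selmer ratios positive integers; since the Tamagawa contributions to $c_3(\phi_{i,s})$ and $c_3(\hat\phi_{i,s})$ are reciprocal and $\alpha_{\phi_{i,s},3}\alpha_{\hat\phi_{i,s},3} = \alpha_{[3],3} = 3$, one gets $c_3(\phi_{i,s})c_3(\hat\phi_{i,s}) = 3$ exactly, forcing each pair $\{c_3(\phi_{i,s}),c_3(\hat\phi_{i,s})\}$ to equal $\{1,3\}$; then Lemma~\ref{ratios mod 2}(ii) gives $c_3(\phi_{1,s})\neq c_3(\phi_{2,s})$ and the result follows. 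Without this normalization step the parity argument alone cannot exclude $\{3,9\}$ or $\{1/3,1\}$, so your $p=3$ proof is incomplete as written.
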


\begin{proof}
First assume $p \neq 3$.  There is a unique $s \in \Q_p^\times/\Q_p^{\times2}$ such that $E[\phi_{1,s}]$ has a $\Q_p$-point, and similarly for $\phi_2$.  Thus, we can choose $s$ so that both $E[\phi_{1,s}]$ and $E[\phi_{2,s}]$ have no $\Q_p$-points.  Then by part (ii) of Lemma \ref{lem:duality}, the groups $E''[\hat\phi_{2,s}]$ and $E'[\hat\phi_{1,s}]$ also have no $\Q_p$-points.  By \eqref{eq:local_ratio_def}, all four ratios $c_p(\hat\phi_{i,s})$ and $c_p(\hat\phi_{i,s})$ are therefore positive integers.  Since  
\[c_p(\phi_{i,s})c_p(\hat\phi_{i,s}) = \dfrac{c_p(E_i)}{c_p(E)}\dfrac{c_p(E)}{c_p(E_i)} = 1,\]
for $i = 1,2$, we conclude that all four local Selmer ratios are equal to 1, as desired.  

Since $\alpha_{\phi_{i,s},\p} \alpha_{\hat \phi_{i,s},\p} = \alpha_{[3],\p} = 3$, we have $$c_3(\phi_{1,s}) c_3(\hat \phi_{1,s}) = 3 = c_3(\phi_{1,s}) c_3(\hat \phi_{1,s}).$$ By choosing $s \in \Q_3^\times/(\Q_3^\times)^2$ such that all four local Selmer ratios are positive integers,  we therefore get that one of $c_3(\phi_{1,s})$ and $c_3(\hat \phi_{1,s})$ is $1$ and the other is $3$. The same holds for  $c_3(\phi_{2,s})$ and $c_3(\hat \phi_{2,s})$. Since $c_3(\phi_{1,s}) \neq c_3(\phi_{2,s})$ by part \eqref{cong:2} of Lemma \ref{ratios mod 2}, one of $c_3(\phi_{1,s})$ and $c_3(\phi_{2,s})$ is 1 and the other is 3.  
\end{proof}

Over general number fields $F$, similar arguments give the following weak version of Lemma \ref{lem:two-isogenies-trivial}:

\begin{lemma}\label{lem:optimal-twist-nf}
Let $F$ be a number field.  Then for each $\p \nmid 3$, there exists $s \in F^*/F^{*2}$ such that $c_\p(\phi_{1,s})=c_\p(\phi_{2,s})=1$.  There also exists $s \in F^*/F^{*2}$ such that
\[
\prod_{\p\mid 3} c_\p(\phi_{1,s}) \in \{1,3,\dots,3^d\} \quad \text{\rm and} \quad \prod_{\p\mid 3} c_\p(\phi_{2,s}) \in \{1,3,\dots,3^d\}.
\]
\end{lemma}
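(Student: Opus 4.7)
The plan is to follow the proof of Lemma~\ref{lem:two-isogenies-trivial} over $\Q$, producing suitable local square classes at the relevant primes and gluing them by weak approximation. By Corollary~\ref{cor:loc-const}, each $c_\p(\phi_{i,s})$ depends only on the image of $s$ in $F_\p^*/F_\p^{*2}$, so it is enough to prescribe the correct local class at the finitely many primes of interest. Throughout, we use Lemma~\ref{lem:duality}(ii), which identifies $E'[\hat\phi_1]\cong E[\phi_2]$ and $E''[\hat\phi_2]\cong E[\phi_1]$ as Galois modules, together with Proposition~\ref{prop:local-selmer-comp}, which gives
\[
c_\p(\phi_{i,s})\,c_\p(\hat\phi_{i,s}) \;=\; \alpha_{[3],\p} \;=\;
\begin{cases} 1 & \text{if } \p \nmid 3, \\ 3^{[F_\p:\Q_3]} & \text{if } \p \mid 3. \end{cases}
\]

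For the first assertion, fix a finite $\p\nmid 3$. The Galois action on $E[\phi_i]\cong\Z/3\Z$ is described by a character $\chi_i\colon G_{F_\p}\to\{\pm 1\}$, and twisting by the quadratic character attached to $s$ replaces $\chi_i$ with $\chi_i\chi_s$. Thus $E_s[\phi_{i,s}]$ has an $F_\p$-rational point exactly when $\chi_s=\chi_i$. Since $|F_\p^*/F_\p^{*2}|\ge 4$ for any finite prime, I may choose a local class $s_\p$ with $\chi_{s_\p}\notin\{\chi_1,\chi_2\}$; Cartier duality then forces all four kernels $E_{s_\p}[\phi_{i,s_\p}]$ and $E'_{s_\p}[\hat\phi_{i,s_\p}]$ to be $F_\p$-irrational. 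All four local ratios are consequently positive integers with pairwise products $\alpha_{[3],\p}=1$, so each equals $1$. Weak approximation in $F^*/F^{*2}$ yields a global $s$ lifting $s_\p$.

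For the second assertion, the same template runs at every $\p\mid 3$, but now $c_\p(\phi_{i,s})c_\p(\hat\phi_{i,s})=3^{[F_\p:\Q_3]}$. If at each $\p\mid 3$ I can find $s_\p\in F_\p^*/F_\p^{*2}$ making all four of $c_\p(\phi_{i,s_\p})$ and $c_\p(\hat\phi_{i,s_\p})$ positive integers, then each lies in $\{1,3,\ldots,3^{[F_\p:\Q_3]}\}$, and weak approximation over the finite set of primes above $3$ produces a single global $s$ with
\[
\prod_{\p\mid 3}c_\p(\phi_{i,s})\;\in\;\bigl\{1,3,\ldots,3^d\bigr\}
\]
for both $i=1,2$, where $d=\sum_{\p\mid 3}[F_\p:\Q_3]=[F:\Q]$.

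The main obstacle is precisely this local existence statement at each $\p\mid 3$. Over $\Q_3$ it is asserted by direct inspection of the four square classes in the proof of Lemma~\ref{lem:two-isogenies-trivial}. For a general $\p\mid 3$, I expect a similar but more delicate case analysis to succeed: writing $c_\p(\phi_{i,s})=3^{k_i(s)}$ with $k_i(s)\in\Z$, a careful accounting of the Tamagawa numbers of the twists $E_s$, $E'_s$, $E''_s$ across the square classes of $F_\p^*$, combined with the Cartier-dual coupling of $\phi_1$ and $\phi_2$ and the relation $k_i(s)+k_i^{\vee}(s)=[F_\p:\Q_3]$ between the exponents of $c_\p(\phi_{i,s})$ and $c_\p(\hat\phi_{i,s})$, should show that the subsets of $s$ for which $k_1(s)$ or $k_2(s)$ falls outside $[0,[F_\p:\Q_3]]$ together cover strictly less than $|F_\p^*/F_\p^{*2}|$, leaving an admissible $s_\p$ to pick.
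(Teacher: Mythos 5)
Your overall strategy mirrors the paper's: the paper supplies no separate proof, stating only that ``similar arguments'' to Lemma~\ref{lem:two-isogenies-trivial} apply, and your character-theoretic set-up glued by weak approximation is exactly that argument spelled out. The first assertion is handled correctly.

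However, you flag the local existence of $s_\p$ at $\p\mid 3$ as an unresolved obstacle requiring a ``more delicate case analysis'' via Tamagawa numbers. This is a genuine gap in your write-up, but the missing step is \emph{not} delicate and needs no Tamagawa-number bookkeeping: the character argument you already gave for $\p \nmid 3$ applies verbatim when $\p\mid 3$. For any finite prime $\p$ of odd residue characteristic one still has $|F_\p^*/F_\p^{*2}| = 4$, so you may pick a local class $s_\p$ with $\chi_{s_\p}\notin\{\chi_1,\chi_2\}$; Cartier duality (via Lemma~\ref{lem:duality}(ii), so that $E'_s[\hat\phi_{1,s}]$ carries the character $\chi_2\chi_{s_\p}$ and $E''_s[\hat\phi_{2,s}]$ carries $\chi_1\chi_{s_\p}$) then kills all four kernels over $F_\p$. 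From the defining formula \eqref{eq:local_ratio_def}, each of the four local ratios has denominator $1$ and is therefore a positive integer. Since $c_\p(\phi_{i,s_\p})\,c_\p(\hat\phi_{i,s_\p}) = \alpha_{[3],\p} = 3^{[F_\p:\Q_3]}$, each of these positive integers must be a power of $3$ lying in $\{1,3,\ldots,3^{[F_\p:\Q_3]}\}$. Weak approximation over $\p\mid 3$ then produces a single $s\in F^*/F^{*2}$ with $\prod_{\p\mid 3}c_\p(\phi_{i,s})\in\{1,3,\ldots,3^d\}$ for $i=1,2$, since $\sum_{\p\mid 3}[F_\p:\Q_3]=d$. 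In short: you identified the right remaining step but needlessly conjectured it would require new input; it falls out of the mechanism you already wrote down.
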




\section{Curves with a $9$-isogeny}\label{9-isogeny}

In this section, we prove Theorems \ref{thm:9-isogeny} and \ref{thm:rigged-sha}.  Let $\phi\colon E' \to E^{\prime\prime}$ be a cyclic $9$-isogeny over $F$.  Then $\phi$ factors as a composition of two $3$-isogenies
 \[E' \stackrel{\hat\phi_1}{\longrightarrow} E \stackrel{\phi_2}{\longrightarrow} E''\]
 over $F$.
The intermediate elliptic curve $E$ has two independent 3-isogenies, $\phi_1$ and $\phi_2$, so it is subject to Proposition \ref{prop:sha-elements} and the results of Section \ref{sec:two-isogenies}.

\subsection{Proof of Theorem \ref{thm:9-isogeny}}
Assume $F=\Q$.  We may replace $E', E,$ and $E''$ with their quadratic twists, so we may assume that $E[3] \simeq \Z/3\Z \times \mu$.  By Lemma \ref{lem:9-isogeny-param}, $E$ is isomorphic to 
\[E_{u,v}\colon v(x^3 + y^3 + z^3) = 3uxyz,\]
for some coprime integers $u$ and $v$, with $v/u \notin \{0,1\}$. 

We first consider the case where there is some prime $\ell \geq  5$ dividing $u^2 + uv  + v^2$.  We claim that there exists $s \in \Q^\times/\Q^{\times2}$ such that

\begin{enumerate}[(i)]
\item \label{eq:loc_ell} $c_\ell(\phi_{1,s}) = \frac{1}{3} = c_\ell(\phi_{2,s})$,
\item \label{eq:loc_p}$c_p(\phi_{1,s}) = 1 = c_p(\phi_{2,s})$ for all $p \nmid 3\ell\infty$, and
\item \label{eq:loc_3infty} One of $c_3(\phi_{i,s})c_\infty(\phi_{i,s})$, for $i \in \{1,2\}$, equals 3 and the other equals $\frac13$.
\end{enumerate}

By weak approximation, it suffices to shows that each of these can be individually satisfied, and  indeed, \eqref{eq:loc_ell} follows from Lemmas \ref{lem:9-isogeny-param}, \ref{lem:9isog mult vals}, and Lemma \ref{lem:split-mult}, \eqref{eq:loc_p} follows from Lemma \ref{lem:two-isogenies-trivial}, and \ref{eq:loc_3infty} follows from combining Proposition \ref{prop:local-selmer-comp} with Lemma \ref{lem:two-isogenies-trivial}.

It follows that at least one of the sets $T_{-2}(\phi_1) \cap T_0(\phi_2)$ and $T_{0}(\phi_1) \cap T_0(\phi_{-2})$ is non-empty.  Theorem \ref{thm:9-isogeny} now follows from Proposition \ref{prop:sha-elements}.  

We next consider the case where $q := u^2 + uv + v^2$ is not divisible by any prime $\ell > 3$.  Since $q$ is positive definite as a quadratic form in $(u,v)$, we have $q > 0$ and since $u$ and $v$ are coprime, $q$ can't be divisible by either $2$ or $9$, 
so it follows that $q \in \{1,3\}$.  Up to quadratic twist, these cases correspond to $E$ with Cremona labels $27a1$ and $54a1$. A computation in \verb^Magma^ shows that $T_{-2}(\phi_1) \cap T_0(\phi_2)$ is non-empty and Theorem \ref{thm:9-isogeny} therefore follows from Proposition \ref{prop:sha-elements} as above.

\subsection{Quantitative version of Theorem \ref{thm:9-isogeny} over number fields}

Assume now that $F$ has degree $d$ over $\Q$, with $r_1$ real places and $r_2$ pairs of complex places.  Define sets $\mathcal{P}_1$, $\mathcal{P}_2$, and $\mathcal{P}_3$ by
\begin{align*}
\mathcal{P}_1 &= \{\p : E \text{ has (pot.) mult. red. at $\p$ with } v(j) = 3v(j^\prime) \text{ and } 3v(j) = v(j^{\prime\prime}) \\ 
\mathcal{P}_2 &=  \{\p : E \text{ has (pot.) mult. red. at $\p$ with } 
v(j) = 3v(j^\prime)\text{ and } v(j) = 3v(j^{\prime\prime}). 
\end{align*}
and
\[
\mathcal{P}_3 =\{\p : E \text{ has (pot.) mult. red. at $\p$ with } 3v(j) = v(j^\prime) \text{ and } v(j) = 3v(j^{\prime\prime})  
\]

Also set $\mathcal{P}_\mathrm{mult}=\mathcal{P}_1\cup\mathcal{P}_2\cup\mathcal{P}_3$.  The following theorem gives general results about $3$-torsion elements in Tate-Shafarevich groups of quadratic twists for elliptic curves with a 9-isogeny over $F$.

\begin{theorem}\label{lem:9-isog-sha}
Let $s_0 \in F^*/F^{*2}$ be any square class such that $E_{s_0}$ does not have split multiplicative reduction at any $\p \in \mathcal{P}_3$.  Let $\omega_1^\mathrm{sp}$ denote the number of $\p \in \mathcal{P}_1$ at which $E_{s_0}$ has split multiplicative reduction, and define $\omega_2^\mathrm{sp}$ analogously.  Then for a positive proportion of $s\in s_0 F^{*2}$
\[
\mathrm{dim}_{\mathbb{F}_3} \Sha(E_s)[3] \geq \min(2\omega_1^\mathrm{sp}-2r_2,2\omega_2^\mathrm{sp}-d).
\]
\end{theorem}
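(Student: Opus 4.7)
The plan is a quantitative refinement of the proof of Theorem \ref{thm:9-isogeny}, applied to the intermediate curve $E$ with its two independent $3$-isogenies $\phi_1, \phi_2$ arising from the given $9$-isogeny factorization. Writing $t_\p(\phi) = \log_3 c_\p(\phi)$, the strategy is to force the global log-Selmer ratios $t(\phi_{1,s})$ and $t(\phi_{2,s})$ into a prescribed imbalanced configuration on a positive-density family of twists $s$, and then invoke Proposition \ref{prop:sha-elements}.

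First I would compute the local log-Selmer ratios $t_\p(\phi_{i,s})$ place by place. By Lemmas \ref{lem:twistofgoodred} and \ref{lem:9isog good twist}, the only nonzero contributions arise from $\mathcal{P}_\mathrm{mult}$, primes above $3$, and infinite places. At each $\p \in \mathcal{P}_i$ where $E_s$ has split multiplicative reduction, Lemmas \ref{lem:9isog mult vals} and \ref{lem:split-mult} give $(t_\p(\phi_1), t_\p(\phi_2)) = (-1,+1)$, $(-1,-1)$, $(+1,-1)$ for $i=1,2,3$ respectively. Requiring $s$ to agree with $s_0$ locally at $\mathcal{P}_\mathrm{mult}$ preserves the split-multiplicative pattern specified by $s_0$ and in particular kills the $\mathcal{P}_3$ contributions by hypothesis. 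At each complex place, $E[\phi_i] \subset E(\mathbb{C})$ forces $t_v(\phi_i) = -1$ for both $i$. At each real place, Cartier duality (Lemma \ref{lem:duality}(ii)) makes exactly one of $E[\phi_1], E[\phi_2]$ real-rational, so $t_v(\phi_1)+t_v(\phi_2)=-1$; by an appropriate local twist at each real place I direct the full $-1$ contribution onto $\phi_1$. Finally, Lemma \ref{lem:optimal-twist-nf} supplies a local choice at primes above $3$ with $\sum_{\p \mid 3} t_\p(\phi_{i,s}) = \epsilon_i \in \{0,1,\ldots,d\}$ for both $i$ simultaneously. Weak approximation, in the spirit of Corollary \ref{cor:loc-const}, lets all these local conditions be imposed together on a positive-density subfamily of twists.

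Collecting contributions yields
\[
t(\phi_{1,s}) + t(\phi_{2,s}) = -2\omega_2^\mathrm{sp} - d + \epsilon_1 + \epsilon_2, \qquad t(\phi_{2,s}) - t(\phi_{1,s}) = 2\omega_1^\mathrm{sp} + r_1 + \epsilon_2 - \epsilon_1,
\]
so $-(t(\phi_{1,s}) + t(\phi_{2,s})) \geq 2\omega_2^\mathrm{sp} - d$ and $t(\phi_{2,s}) - t(\phi_{1,s}) \geq 2\omega_1^\mathrm{sp} - 2r_2$ since $\epsilon_i \in [0,d]$. Setting $m = -t(\phi_{1,s})$ and $n = t(\phi_{2,s})$, the condition $m > |n|$ of Proposition \ref{prop:sha-elements} amounts to both of these quantities being positive. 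Applying the proposition with $\psi = \phi_1$ and $\phi = \phi_2$ gives $\dim_{\F_3} \Sha(E^{(2)}_s)[3] \geq m-n$ for a positive proportion of such $s$, and the symmetric application, swapping the roles of $\phi_1,\phi_2$, gives the analogous bound for $\Sha(E^{(1)}_s)[3]$. Transferring these back to $\Sha(E_s)[3]$ through the $\phi_{i,s}$-descent sequence
\[
0 \to E^{(i)}_s(F)/\phi_{i,s}(E_s(F)) \to \Sel_{\phi_{i,s}}(E_s) \to \Sha(E_s)[\phi_{i,s}] \to 0
\]
yields the claimed lower bound $\dim_{\F_3}\Sha(E_s)[3] \geq \min(2\omega_1^\mathrm{sp} - 2r_2,\; 2\omega_2^\mathrm{sp} - d)$.

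The main technical obstacle I anticipate is this final comparison: carefully relating $\dim_{\F_3} \Sha(E^{(i)}_s)[3]$ back to $\dim_{\F_3} \Sha(E_s)[3]$ through the descent, tracking the rank contribution to the quotient $E^{(i)}_s(F)/\phi_{i,s}(E_s(F))$, and explaining why the two bounds combine to produce the minimum rather than the maximum. A secondary subtlety is verifying genuine compatibility of the local choices (at $\mathcal{P}_\mathrm{mult}$, at real places, and above $3$) on a single positive-density family of $s \in s_0 F^{*2}$, though this should follow cleanly from weak approximation and the finiteness of the bad primes involved.
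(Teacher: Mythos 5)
Your local computations and the resulting bounds $-(t_1 + t_2) \geq 2\omega_2^{\mathrm{sp}} - d$ and $t_2 - t_1 \geq 2\omega_1^{\mathrm{sp}} - 2r_2$ exactly match the paper's own derivation, which records the same information as
\[
t_1 \in [-\omega_1^{\mathrm{sp}}-\omega_2^{\mathrm{sp}} - d + r_2,\ -\omega_1^{\mathrm{sp}}-\omega_2^{\mathrm{sp}} + r_2], \qquad
t_2 \in [\omega_1^{\mathrm{sp}}-\omega_2^{\mathrm{sp}} - r_2,\ \omega_1^{\mathrm{sp}}-\omega_2^{\mathrm{sp}} - r_2 + d].
\]
So the strategy, the place-by-place bookkeeping from Lemmas \ref{lem:9isog good twist}, \ref{lem:9isog mult vals}, \ref{lem:split-mult}, and \ref{lem:optimal-twist-nf}, and the invocation of Proposition \ref{prop:sha-elements} are all the paper's.

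Your explanation of where the minimum comes from, however, is wrong. You claim it arises by applying Proposition \ref{prop:sha-elements} a second time with the roles of $\phi_1$ and $\phi_2$ swapped. That symmetric application does not satisfy the hypothesis of the proposition: you have arranged $|t_1|$ to dominate, so with $m' = -t_2$ and $n' = t_1$ the requirement $m' > |n'|$ fails (indeed $m' \leq 0$ whenever $t_2 \geq 0$). It is also unnecessary. The minimum already falls out of a \emph{single} application of Proposition \ref{prop:sha-elements}, via the two cases for the sign of $n = t_2$ inside the quantity $m - |n|$ that actually governs the conclusion (the proposition's $m - n$ should be read as $m - |n|$, since the rank input from Theorem \ref{thm:selmer-average} is at most $|n|$): if $t_2 \geq 0$ then $m - |n| = -(t_1 + t_2) \geq 2\omega_2^{\mathrm{sp}} - d$, while if $t_2 < 0$ then $m - |n| = t_2 - t_1 \geq 2\omega_1^{\mathrm{sp}} - 2r_2$. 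Either way $m - |n| \geq \min(2\omega_1^{\mathrm{sp}} - 2r_2,\ 2\omega_2^{\mathrm{sp}} - d)$, which is the stated bound; there is no second bound for a second curve to combine.

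The concern you raise about transferring the conclusion from $\Sha$ of an isogenous curve back to $\Sha(E_s)$ is real but is a feature of the paper's own proof, which also concludes by citing Proposition \ref{prop:sha-elements} without comment; unwinding that proposition, the large Selmer group is $\Sel_{\hat\phi_{1,s}}(E'_s)$, so the torsion it produces lives in $\Sha(E'_s)$ (the $9$-isogeny endpoint in Section \ref{9-isogeny}'s notation), exactly as in the ``either\dots or'' phrasing of Theorem \ref{thm:9-isogeny}. You should not expect the descent sequence for $\phi_{i,s}$ out of $E_s$ to rescue this, since that sequence controls $\Sha(E_s)[\phi_{i,s}]$ via $\Sel_{\phi_{i,s}}(E_s)$, and that Selmer group is \emph{small}, not large, under the configuration you have arranged.
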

\begin{proof}
By Lemma \ref{lem:9isog good twist}, the only places that contribute to either of the Selmer ratios $c(\phi_1)$ and $c(\phi_2)$ are infinite primes, places of (potentially) multiplicative reduction, and primes $\p \mid 3$.  For any $s\in F^*/F^{*2}$, define $t_1 = \mathrm{ord}_3(c(\phi_{1,s}))$ and $t_2 = \mathrm{ord}_3(c(\phi_{2,s}))$, along with 
\[
t_{1,\mathrm{mult}} = \mathrm{ord}_3\prod_{\p \in \mathcal{P}_\mathrm{mult}} c_\p(\phi_{1,s}) \quad\text{ and }\quad t_{2,\mathrm{mult}} = \mathrm{ord}_3 \prod_{\p \in \mathcal{P}_\mathrm{mult}} c_\p(\phi_{2,s}).
\]
From Lemma \ref{lem:split-mult} and Lemma \ref{lem:9isog mult vals}, it follows that for any $s\in s_0F^{*2}$ 
\[
t_{1,\mathrm{mult}} = {-\omega_1^\mathrm{sp}-\omega_2^\mathrm{sp}} \quad \text{ and }\quad  t_{2,\mathrm{mult}} = {\omega_1^\mathrm{sp}-\omega_2^\mathrm{sp}}.
\]
It remains to consider the contribution of places $\p\mid 3\infty$.  By Proposition \ref{prop:local-selmer-comp}, we have
\[
\prod_{\p \mid \infty} c_\p(\phi_{1,s}) = 3^{-r_1-r_2} \quad \text{and} \quad \prod_{\p \mid \infty} c_\p(\phi_{2,s}) = 3^{-r_2}.
\] 
for the positive proportion of $s \in s_0 F^{*2}$ such that $E[\phi_{i,s}](K_v) = \Z/3\Z$ for all complex places $v$ and $E[\phi_{i,1}](K_v) = \Z/3\Z$ and $E[\phi_{i,2}](K_v) = 0$ for all real places.

Appealing to Lemma \ref{lem:optimal-twist-nf}, we find for all such $s$, we have
\[
t_1 \in [-\omega_1^\mathrm{sp}-\omega_2^\mathrm{sp}-d+r_2,-\omega_1^\mathrm{sp}-\omega_2^\mathrm{sp}+r_2]
\]
and
\[
t_2 \in [\omega_1^\mathrm{sp}-\omega_2^\mathrm{sp}-r_2,\omega_1^\mathrm{sp}-\omega_2^\mathrm{sp}-r_2+d].
\]
It then follows that
\[
|t_1|-|t_2| \geq \min(2\omega_1^\mathrm{sp}-2r_2, 2\omega_2^\mathrm{sp} - d),
\]
so that the claim now follows from Proposition \ref{prop:sha-elements}.
\end{proof}

\subsection{Proof of Theorem \ref{thm:rigged-sha}}

For this theorem, we wish to construct elliptic curves $E/F$ for which a vast majority of quadratic twists the group $\Sha(E_s)[3]$ is large.  We first loosely discuss the idea, continuing with the notation of the proof of Lemma \ref{lem:9-isog-sha}.  First, we note that the explicit parametrization provided by Lemma \ref{lem:9-isogeny-param} enables us to construct many elliptic curves for which $\mathcal{P}_3$ is empty by taking $v = 1$.  Thus, every $s \in F^*/F^{*2}$ will be subject to Lemma \ref{lem:9-isog-sha}, so if both $\mathcal{P}_1$ and $\mathcal{P}_2$ are large, then a positive proportion of twists will have large $3$-torsion in $\Sha$.  However, to ensure that this positive proportion is large, we need the rank bound coming from the second isogeny $\phi_2$ to be very efficient.  This bound is most efficient when $|\mathrm{ord}_3 c(\phi_{2,s})| \approx |\omega_1^\mathrm{sp}-\omega_2^\mathrm{sp}|$ is small.  The elliptic curves we construct, therefore, will be chosen so that the sets $\mathcal{P}_1$ and $\mathcal{P}_2$ are close in size.  The following lemma guarantees that we are able to find such curves.

\begin{lemma}\label{lem:rigged-curves}
Let $F$ be a number field.  There exists a constant $r$, depending only of $F$, such that for any $m\geq 1$ there are elliptic curves $E$ with cyclic $9$-isogenies over $F$ with $\mathcal{P}_3$ empty and $\#\mathcal{P}_1,\#\mathcal{P}_2 \in \{m,m+1,\dots,m+r\}$.
\end{lemma}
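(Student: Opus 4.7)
The plan is to exploit the Hesse parametrization of Lemma \ref{lem:9-isogeny-param} with $v = 1$, reducing the problem to one about the number of prime factors of $u - 1$ and $u^2 + u + 1$ in $\O_F$.  Direct inspection of the three $j$-invariant formulas shows that their denominators are supported only on prime divisors of $u - 1$ or $u^2 + u + 1$.  Since $u^2 + u + 1 = (u-1)(u+2) + 3$, no prime $\p \nmid 3$ divides both factors, so Lemma \ref{lem:9isog mult vals} shows that case (iii) (i.e., membership in $\mathcal{P}_3$) never occurs at such primes: $\mathcal{P}_3 = \emptyset$, $\#\mathcal{P}_1 = \omega^*(u - 1)$, and $\#\mathcal{P}_2 = \omega^*(u^2 + u + 1)$, where $\omega^*$ counts prime ideals coprime to $3$.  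The lemma becomes the claim that for every $m \geq 1$, there exists $u \in \O_F \setminus \{0, 1\}$ with both $\omega^*(u - 1)$ and $\omega^*(u^2 + u + 1)$ in $\{m, m+1, \dots, m+r\}$ for some $r = r(F)$.

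To secure the lower bound, I would apply Chebotarev to choose $m$ primes $\p_1, \dots, \p_m$, all coprime to $3$ and split in $F(\zeta_3)/F$, so that $x^2 + x + 1$ admits a root $\alpha_i$ modulo each $\p_i$, together with $m$ further primes $\q_1, \dots, \q_m$ disjoint from the $\p_j$'s and coprime to $3$.  By CRT, pick $u_0 \in \O_F$ with $u_0 \equiv 1 \pmod{\q_i}$ and $u_0 \equiv \alpha_i \pmod{\p_i}$ for every $i$.  Setting $N = \prod_i \q_i \prod_i \p_i$, every $u \equiv u_0 \pmod N$ then satisfies $\omega^*(u - 1) \geq m$ and $\omega^*(u^2 + u + 1) \geq m$.

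For the upper bound, write $u = u_0 + N t$, so that $u - 1 = (\prod \q_i)\, f_1(t)$ and $u^2 + u + 1 = (\prod \p_i)\, f_2(t)$ with $f_1, f_2 \in \O_F[t]$ of degrees $1$ and $2$.  Since $\alpha_i^2 + \alpha_i + 1 \equiv 0 \pmod{\p_i}$ forces $\alpha_i \not\equiv 1 \pmod{\p_i}$ (as $\p_i \nmid 3$), the constant term of $f_1$ is coprime to every $\p_j$; similarly $u_0^2 + u_0 + 1 \equiv 3 \pmod{\q_i}$ shows the constant term of $f_2$ is coprime to every $\q_j$.  Hence the cubic $g(t) := f_1(t) f_2(t) \in \O_F[t]$ has no fixed prime divisor among the chosen $\p_i, \q_i$.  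It now suffices to find $t$ with $\omega^*(g(t)) \leq r_0$ for some constant $r_0 = r_0(F)$, as this forces $\omega^*(u - 1), \omega^*(u^2 + u + 1) \in [m, m + r_0]$; one invokes the almost-prime sieve of Halberstam--Richert (or its Greaves-style refinement, adapted to the number-field setting) for this purpose, and takes $r = r_0$.

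The main obstacle is verifying that $r_0$ can be chosen independently of $m$: the coefficients of $f_1$ and $f_2$ grow with $m$ through $N$, and one must check that the sieve's level-of-distribution estimates remain uniform in this growth.  This is a standard but technical uniformity check.  An alternative approach would apply a joint Erdős--Kac equidistribution theorem to $(\omega^*(u-1), \omega^*(u^2+u+1))$ on the congruence class $u \equiv u_0 \pmod N$, yielding the existence of the required $u$ through a direct counting argument.
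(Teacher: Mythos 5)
Your proposal follows the same route as the paper: take $v = 1$ in the Hesse model, identify $\mathcal{P}_1$ and $\mathcal{P}_2$ with primes dividing $u - 1$ and $u^2 + u + 1$ respectively, impose congruence conditions via CRT to force the lower bound on $\#\mathcal{P}_1, \#\mathcal{P}_2$, and invoke a sieve for the upper bound. Two remarks. First, the reason $\mathcal{P}_3 = \emptyset$ is not the coprimality of $u - 1$ and $u^2 + u + 1$: in the Hesse model, the primes falling into case $(iii)$ of Lemma \ref{lem:9isog mult vals} are exactly those dividing $v$, and $v = 1$ has none. The coprimality away from $3$ is still needed, but its role is to ensure that every prime of potentially multiplicative reduction lands cleanly in $\mathcal{P}_1$ or $\mathcal{P}_2$ with the expected valuation pattern. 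Second, the uniformity concern you raise is a genuine complication \emph{in your formulation}, because substituting $u = u_0 + Nt$ and sieving on $g(t) = f_1(t)f_2(t)$ introduces polynomials whose heights grow with $m$. The paper avoids this by sieving directly on $u$ in the residue class $u \equiv u_0 \pmod N$: then the polynomials $u-1$ and $u^2+u+1$ are fixed, the sieve dimension is $\kappa = 2$ (or $\kappa = 3$ when $\mu_3 \subset F$) independent of $m$, and the beta-sieve bound on the number of prime factors of the cofactors depends only on $\kappa$ and $[F:\Q]$. The $m$ forced primes then contribute exactly $m$ to each of $\#\mathcal{P}_1$ and $\#\mathcal{P}_2$, yielding a constant $r$ independent of $m$. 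Adopting this change of variables removes the technical obstacle you correctly flagged.
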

\begin{proof}
Suppose first that $F=\Q$ and recall the curve $E_{u,v}$ from Lemma \ref{lem:9-isogeny-param}.  By taking $v=1$ and $u\in \mathbb{Z}$, we may guarantee that $\mathcal{P}_3$ is trivial.  In this case, we find that
\[
\mathcal{P}_1 = \{p : p \mid u-1\} \quad \text{and} \quad \mathcal{P}_2 = \{p : p \mid u^2+u+1\}.
\]
Using a standard lower-bound sieve (e.g., the beta-sieve \cite[Theorem 11.13]{FriedlanderIwaniec} of dimension $\kappa=2$), it follows that there are infinitely many $u$ such that neither $u-1$ nor $u^2+u+1$ is divisible by a prime $p \leq |u|^{1/4.84}$.  This implies, in particular, that $u-1$ is divisible by at most $4$ primes while $u^2+u+1$ is divisible by at most $9$ primes.  The same conclusion holds for $u$ satisfying any finite set of congruence conditions, apart from any divisibility conditions imposed by these conditions (e.g., if $u$ is required to be $1 \pmod{p}$, then there are infintely many $u$ such that $u-1$ is divisible by $p$ and at most four other primes).  To prove the lemma when $F=\Q$, we therefore impose congruence conditions on $u$ to guarantee that $u-1$ and $u^2+u+1$ are divisible by at least $m$ primes, and then appeal to this lower-bound sieve.  This is straightforward: let $q_1,\dots,q_{m-1} \equiv 1 \pmod{3}$ be fixed primes, set $q=q_1\dots q_{m-1}$, and let $\alpha$ be such that $\alpha^2+\alpha+1 \equiv 0 \pmod{q}$.  Let $p_1,\dots,p_{m-1}$ be prime, distinct from the $q_i$, and consider $u = p_1\dots p_{m-1} u_1$ where $u_1 \equiv \alpha / (p_1\dots p_{m-1}) \pmod{q}$.  This construction yields the lemma in the case $F=\Q$ with $r=8$.

For general $F$, we again apply a lower-bound sieve, this time to points of bounded height in Minkowski space.  The sieve has exponent of distribution at least $1/d-\epsilon$ for any $\epsilon>0$ and is of dimension $\kappa=2$ if $\mu_3 \not\subset F$ and of dimension $\kappa=3$ if $\mu_3 \subset F$.  Analogous to the case $F=\mathbb{Q}$, we find there is some constant $r$ such that for infinitely many $u \in \mathcal{O}_F$, neither $u-1$ nor $u^2+u+1$ has more than $r$ prime factors; when $\kappa=2$, we may take $r=\lfloor 9.68d\rfloor$, and when $\kappa=3$, we may take $r=2\lfloor 20d/3\rfloor$. By imposing finitely many congruence conditions, the lemma follows.
\end{proof}

We are now ready to make explicit the proof of Theorem \ref{thm:rigged-sha} outlined above.

\begin{proof}[Proof of Theorem $\ref{thm:rigged-sha}$]
We will show that for a curve $E$ constructed in Lemma \ref{lem:rigged-curves}, a proportion at least $1-O(m^{-1/8})$ of twists $E_s$ have $\mathrm{dim}_{\mathbb{F}_3} \Sha(E_s)[3] \geq m-2m^{7/8}$.  Upon taking $m$ sufficiently large, the claim will follow.  Thus, let $m$ be large and let $E$ be a curve constructed as in Lemma \ref{lem:rigged-curves}.  For convenience, assume each $\p$ of multiplicative reduction has norm at least $m$.  

With the notation of Lemma \ref{lem:9-isog-sha}, by varying over $s \in F^*/F^{*2}$, we may think of $\omega_1^\mathrm{sp}$ and $\omega_2^\mathrm{sp}$ as sums of independent Bernoulli random variables.  In particular, at a given prime $\p$ of (potentially) multiplicative reduction, the reduction of $E_s$ is split for a proportion $\frac{1}{2} - \frac{1}{2\mathbf{N}(\p)+2}$ of $s \in F^*/F^{*2}$.  Thus, we find the expected value of $\omega_1^\mathrm{sp}$ to be
\[
\mathbb{E}[\omega_1^\mathrm{sp}] = \sum_{\p \in \mathcal{P}_1} \frac{1}{2}-\frac{1}{2\mathbf{N}(\p)+2} = m/2 + O(1).
\]
A similar computation reveals its variance to be $m/4 + O(1)$, with exactly the same results holding for $\omega_2^\mathrm{sp}$.  We therefore find $\mathrm{ord}_3c(\phi_{1,s}) = -\omega_1^\mathrm{sp}-\omega_2^\mathrm{sp}$ to have expected value $-m + O(1)$ with variance $m/2 + O(1)$.  By Chebyshev's inequality, it follows that a proportion at least $1 - O(m^{-3/4})$ of $s$ are such that $\mathrm{ord}_3 c(\phi_{1,s}) \leq -m + m^{7/8}$.

Similarly, as $\mathrm{ord}_3c(\phi_{2,s}) = \omega_1^\mathrm{sp}-\omega_2^\mathrm{sp}$, we find that it has expected value $O(1)$ with variance $m/2 + O(1)$.  By Chebyshev's inequality again, a proportion at least $1-O(m^{-1/2})$ of twists have $|\mathrm{ord}_3 c(\phi_{2,s})| \leq m^{3/4}$.  For such twists, the average rank of $\mathrm{Sel}_3(E^\prime_s)$ is at most $m^{3/4} + 3^{-m^{3/4}}$, so that at least $1-O(m^{-1/8})$ of these have rank at most $m^{7/8}$.  Pulling this together, we find that a proportion at least $1-O(m^{-1/8})$ of twists $E_s$ have $\dim_{\mathbb{F}_3} \Sha(E_s)[3] \geq m-2m^{7/8}$.  Upon taking $m$ sufficiently large, the result follows.
\end{proof}



\section{Curves with an $18$-isogeny}\label{18-isogeny}

Suppose $E'$ admits a cyclic 18-isogeny.   Then it also possess a cyclic 9-isogeny $\phi\colon E'\to E^{\prime\prime}$, and as before  we decompose $\phi$ as $\phi = \phi_2 \circ \hat\phi_1$, where $\phi_1\colon E \to E^\prime$ and $\phi_2\colon E \to E^{\prime\prime}$ are 3-isogenies.

Using the parametrization in \cite{Maier}, we may replace $E'$, $E$ and $E''$ with appropriate quadratic twists such that $E^\prime$ has an integral model of the form \begin{equation}\label{eq:18isogeny} y^2 - m^3xy = x^3 + (-2n^6 + n^3m^3)x^2 + (n^{12} -  n^9m^3)x\end{equation} with $m,n$ relatively prime. Examining the $c$-invariants of $E^\prime$, we find that the model \eqref{eq:18isogeny} is minimal except possibly at $p = 2$. At $p = 2$, $E^\prime$ will have multiplicative reduction and the model \eqref{eq:18isogeny} will be non-minimal if and only if $m \equiv n \pmod{2}$, in which case $v_2(\Delta^\prime) = v_2(\Delta^\prime_{\mathrm{min}}) + 12$.

The corresponding model for $E^{\prime \prime}$ is then given by \begin{multline}\label{eq:18isogeny2}
y^2 - m^3xy = x^3 + (-6m^5n + 6m^4n^2 - 23m^3n^3 - 12m^2n^4 - 24mn^5 - 2n^6)x^2 \\ - n(m-n)^9(m^2+mn+n^2)x
\end{multline}
The model \eqref{eq:18isogeny2} will be minimal except possibly at $2$ and $3$. It will be non-minimal at $2$ if and only if $m \equiv n \pmod{2}$, in which case $v_2(\Delta^{\prime\prime}) = v_2(\Delta^{\prime\prime}_{\mathrm{min}}) + 12$. It will be non-minimal at $3$ if and only if $m \equiv n \pmod{3}$, in which case $v_3(\Delta^{\prime\prime}) =v_3(\Delta^{\prime\prime}_{\mathrm{min}}) + 24$ and $E^\prime$ has additive reduction at $3$.

These models allow us to easily understand the places where $E$ has bad reduction.

\begin{lemma}
\label{lem:18isog good twist}
If $E$ has additive, potentially good reduction at a prime $p$, then $E$ has a twist $E_s$ with good reduction at $p$. 
\end{lemma}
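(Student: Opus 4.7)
I plan to reduce to Lemma \ref{lem:9isog good twist} wherever possible. Since $E'$ admits a cyclic $18$-isogeny, it also admits a cyclic $9$-isogeny, which by the setup of this section factors as $E' \xrightarrow{\hat\phi_1} E \xrightarrow{\phi_2} E''$, so $E$ is equipped with two independent $3$-isogenies $\phi_1\colon E \to E'$ and $\phi_2\colon E \to E''$. For any prime $p \ne 3$ of additive, potential good reduction, Lemma \ref{lem:9isog good twist} immediately produces a twist $E_s$ with good reduction at $p$. The remaining case is $p = 3$, where the proof of Lemma \ref{lem:9isog good twist} breaks down because $\Q_3(\zeta_3)/\Q_3$ is ramified and so trivialising the Galois action on $E[3]$ no longer trivialises inertia.

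For $p = 3$, my plan is to use the two independent $3$-isogenies to rule out supersingular potential reduction. If $E$ acquired good supersingular reduction over some extension $L/\Q_3$, then the kernels of $\phi_1$ and $\phi_2$ would extend to distinct finite flat subgroup schemes of order $3$ of the N\'eron model over $\cO_L$, specialising to distinct order-$3$ subgroup schemes of the supersingular special fibre. But a supersingular elliptic curve over $\bar \F_3$ admits a unique closed subgroup scheme of order $3$, namely the kernel of Frobenius; this would be a contradiction. Hence $E$ has potential ordinary reduction at $3$.

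Once potential ordinary reduction is established, the automorphism group of the stable reduction is $\{\pm 1\}$, because in characteristic $3$ the exceptional $j$-values $0$ and $1728$ coincide and ordinary elliptic curves in that characteristic satisfy $j \ne 0$. The image of inertia $I_{\Q_3}$ in $\Aut$ of the stable reduction is therefore a quadratic character, cut out by some $s_0 \in \Q_3^\times/\Q_3^{\times 2}$. By weak approximation I can lift $s_0$ to a global $s \in \Q^\times/\Q^{\times 2}$; the twist $E_s$ then has trivial inertia action at $3$, and hence good reduction there.

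The main obstacle is the step asserting that distinct rational $3$-subgroups of the generic fibre of $E$ extend to distinct subgroup schemes of the N\'eron special fibre at $p = 3$, which requires care precisely because the residue characteristic equals the order of the subgroups. I would verify this by taking the flat closure of each $\ker \phi_i$ inside the N\'eron model and invoking the standard theory of finite flat group schemes over $\cO_L$.
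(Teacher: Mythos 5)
Your treatment of the case $p \ne 3$ agrees with the paper (both cite Lemma \ref{lem:9isog good twist}); it is the case $p = 3$ where you depart from the paper's explicit, computational argument, and unfortunately that is where the gap you flag turns out to be fatal rather than technical.

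The step you identify as ``the main obstacle'' cannot be filled: it is \emph{not} true that the flat closures of $\ker\phi_1$ and $\ker\phi_2$ in an abelian scheme $\mathcal{E}/\mathcal{O}_L$ must specialize to distinct subgroup schemes of the special fibre. Two distinct finite flat closed subgroup schemes of order $3$ over $\mathcal{O}_L$ (distinguished on the generic fibre) can very well both specialize to $\ker F = \alpha_3$; over a ramified base the Oort--Tate parameters $(a,b)$ with $ab = 3u$ admit several inequivalent solutions in which both $a$ and $b$ are non-units, so ``biconnected'' order-$3$ subschemes of a supersingular $\mathcal{E}[3]$ are not unique. Concretely, $E\colon y^2 + y = x^3$ over $\Q_3$ has $j = 0$, hence additive, potentially good \emph{supersingular} reduction at $3$, and it has two independent $G_{\Q_3}$-stable lines in $E[3]$: the one through $(0,0)$, and the one through $(-1,\tfrac{-1\pm\sqrt{-3}}{2})$, the latter being stable because $\Gal(\Q_3(\sqrt{-3})/\Q_3)$ merely swaps the two nonzero points. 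But no quadratic twist of this curve has good reduction at $3$ (any $j=0$ model $y^2 = x^3 + a_6$ has $v_3(\Delta) = 3 + 2v_3(a_6) > 0$). So the inference ``two independent $3$-isogenies $\Rightarrow$ potential ordinary reduction at $3$'' is simply false, and your second paragraph proves something untrue.

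The correct version of the lemma really does use the $18$-isogeny hypothesis, not merely the two $3$-isogenies carried by $E$: this is what the paper's proof exploits. Using Maier's parametrization \eqref{eq:18isogeny} of $E'$ (and hence of the isogeny class), the paper computes directly that $v_3(c_4(E')) = v_3(c_6(E')) = 0$ when $m\not\equiv n\pmod 3$ (so $E$ already has good reduction at $3$), while $v_3(c_4(E')) = 2$ and $v_3(c_6(E')) = 3$ when $m\equiv n\pmod 3$, in which case twisting by $3$ strips the powers of $3$ and gives good reduction. In particular, the parametrization precludes $j(E) = 0$, which is exactly the degenerate configuration your argument fails to exclude. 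If you want to retain the conceptual route (``potential ordinary $\Rightarrow$ $\Aut$ of the stable reduction is $\{\pm 1\}$ $\Rightarrow$ a quadratic twist suffices,'' all of which is correct), you must first prove $j(E)\ne 0$ from the $18$-isogeny data; the paper's discriminant/$c$-invariant computation is one way to do that, and I don't see a way around appealing to some such input.
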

\begin{proof}
For $p \ne 3$, this is Lemma \ref{lem:9isog good twist}. For $p = 3$, we observe that $v_3(c_4(E^\prime)) = v_3(c_6(E^\prime)) = 0$ if $m \not \equiv n \pmod{3}$ and $v_3(c_4(E^\prime)) = 2$ and $v_3(c_6(E^\prime)) = 3$ if $m \equiv n \pmod{3}$. Since $E$, and therefore $E^\prime$, is assumed to have bad reduction at $3$, we therefore must be in the latter case. Twisting by $3$, we then obtain a curve of good reduction at $3$. 
\end{proof}

\begin{lemma}
\label{lem:18isogtype2primes}
If $p \ge 5$ divides $(n^2+nm+m^2)(4n^2-2nm+m^2)$, then $E$ has multiplicative reduction at $p$ with $v_{p}(j(E^{\prime})) = 3 v_{p}(j(E)) = v_{p}(j(E^{\prime\prime}))$.
\end{lemma}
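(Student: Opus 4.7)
The proof is a direct symbolic computation, in the spirit of Lemma \ref{lem:9isog mult vals} but technically heavier. Starting from the explicit model \eqref{eq:18isogeny}, I would first compute the standard invariants $b_i, c_4, \Delta$ of $E^\prime$; using $b_2 = m^6 + 4n^3 m^3 - 8n^6$, $b_4 = 2n^9(n^3-m^3)$, $b_6 = 0$, $b_8 = -n^{18}(n^3-m^3)^2$, a short manipulation yields
\[
\Delta(E^\prime) = n^{18} m^9 (n-m)^2 (n^2+nm+m^2)^2 (m+2n)(m^2-2mn+4n^2),
\]
so both $n^2+nm+m^2$ and $m^2-2mn+4n^2$ appear as factors.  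The same procedure applied to \eqref{eq:18isogeny2} gives $\Delta(E^{\prime\prime})$ and hence $j(E^{\prime\prime})$, and for the intermediate curve $E$ (the quotient of $E^\prime$ by the unique order-$3$ subgroup of the kernel of the cyclic $9$-isogeny $\phi = \phi_2 \circ \hat\phi_1$), V\'elu's formulas (implemented as \texttt{IsogenyFromKernel} in \texttt{Magma}) produce a Weierstrass model from which $j(E)$ can be extracted as a rational function of $m,n$.

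To establish the asserted valuation equalities at a prime $p \ge 5$ dividing $(n^2+nm+m^2)(4n^2-2nm+m^2)$, I would separate cases. For $p \mid n^2+nm+m^2$, substitute $m \equiv tn \pmod p$ with $t^2 + t + 1 \equiv 0$ (so $t^3 \equiv 1$); the assumption $p \ge 5$ ensures that none of the remaining factors $n, m, n-m, m+2n, m^2-2mn+4n^2$ of $\Delta(E^\prime)$ vanish modulo $p$ (one checks $t \ne -2$ and $t \ne 1$, each ruling out only $p = 3$). For $p \mid m^2 - 2mn + 4n^2$, use $m \equiv n(1+\sqrt{-3}) \pmod p$, whence $m^3 \equiv -8 n^3$, and analogously verify that no other factor vanishes. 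In both cases a direct check yields $v_p(c_4) = 0$ for each of $E, E^\prime, E^{\prime\prime}$, ensuring (potentially) multiplicative reduction and giving $v_p(j) = -v_p(\Delta)$; one then reads off the valuations of the three discriminants from the factored expressions to confirm the claimed equality $v_p(j(E^\prime)) = 3v_p(j(E)) = v_p(j(E^{\prime\prime}))$.

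The principal obstacle is purely bookkeeping: V\'elu's formulas applied to $E^\prime$ produce unwieldy intermediate polynomials, and the verification of each $v_p$ calculation requires careful tracking of cancellations between numerators and denominators, so a computer algebra system (e.g.\ \texttt{Magma}) is essentially indispensable, as was already the case for Lemma \ref{lem:9-isogeny-param}.
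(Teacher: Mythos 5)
Your plan is essentially correct and would work, but it takes a slightly longer route than the paper's argument. The main structural difference is how the middle curve $E$ is handled. You propose to actually construct $E$ via V\'elu's formulas from the model \eqref{eq:18isogeny} and compute $j(E)$ explicitly as a rational function in $m,n$; this is doable in \texttt{Magma} but, as you note, generates unwieldy intermediate expressions. The paper avoids computing anything about $E$ at all: having computed $\Delta_{E'}$ and $\Delta_{E''}$ (your factored form of $\Delta_{E'}$ agrees with the paper's, and your $b_i$ are correct), and having shown via resultants that any prime $p\geq 5$ dividing $(n^2+nm+m^2)(4n^2-2nm+m^2)$ does not divide $c_4(E')$ nor any other factor of $\Delta_{E'}$ or $\Delta_{E''}$, it concludes $v_p(j(E'))=v_p(j(E''))$ and then simply invokes Lemma \ref{lem:9isog mult vals}. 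That lemma pins down the only possible triple of valuation relations compatible with $v_p(j')=v_p(j'')$, so the relation involving $v_p(j(E))$ comes for free, no V\'elu computation needed. Your substitution trick ($m\equiv tn\pmod p$ with $t^2+t+1\equiv0$, and analogously for the other quadratic factor) is a legitimate alternative to the paper's resultant computations for verifying coprimality, and the two are about equally mechanical; but appealing to Lemma \ref{lem:9isog mult vals} to eliminate the $E$-side computation is the real simplification you're missing. One small caution: when you do carry out the explicit check, the valuations you will actually find are $v_p(j(E'))=v_p(j(E''))$ with $v_p(j(E))=3v_p(j(E'))$ (case (ii) of Lemma \ref{lem:9isog mult vals}), i.e.\ $j(E)$ has the \emph{larger} pole order; this is the relation the proof requires (it is what makes $c_p(\phi_{i,s})=1/3$ for both $i$ in Proposition \ref{lem:18-isogeny}), so compare your output against that rather than the displayed chain literally.
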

\begin{proof}
We have $\Delta_{E^\prime} =m^9n^{18}(n-m)^2(2n+m)(n^2+nm+m^2)^2(4n^2-2nm+m^2)$. A resultant computation then shows that any prime $p \ge 5$ dividing $(n^2+nm+m^2)(4n^2-2nm+m^2)$ can't divide $c_4(E^\prime)$, so $E^\prime$ must have multiplicative reduction at $p$. Further, since $E^\prime$ has multiplicative reduction at $p$, we will have $v_p(j(E^\prime)) = -v_p(\Delta_{E^\prime}) = -v_p((n^2+nm+m^2)^2(4n^2-2nm+m^2))$, where the latter equality follows from a resultant computation between $(n^2+nm+m^2)(4n^2-2nm+m^2)$ and each of the other factors of $\Delta_{E^\prime}$.

Since $\Delta_{E^{\prime\prime}} = mn^{2}(n-m)^{18}(2n+m)^9(n^2+nm+m^2)^2(4n^2-2nm+m^2)$, similar considerations show that $v_p(j(E^{\prime\prime})) = -v_p(\Delta_{E^{\prime\prime}}) = -v_p((n^2+nm+m^2)^2(4n^2-2nm+m^2))$. We therefore have $v_p(j(E^\prime)) = v_p(j(E^{\prime\prime}))$, and the result follows from Lemma \ref{lem:9isog mult vals}.
\end{proof}

\begin{corollary}
\label{cor:18isogtype2primes}
If $E$ is a not a twist of a curve in the isogeny class 14a, then there exist distinct primes $\ell_1,\ell_2  \geq 5$ such that $E$ has multiplicative reduction at $\ell_i$ with $v_{\ell_i}(j(E^{\prime})) = 3 v_{\ell_i}(j(E))=v_{\ell_i}(j(E^{\prime\prime}))$.
\end{corollary}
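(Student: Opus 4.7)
The plan is to combine Lemma~\ref{lem:18isogtype2primes} with a short Diophantine analysis of the product
\[N(m,n) := (n^2+mn+m^2)(4n^2-2mn+m^2).\]
By that lemma, every prime $\ell \ge 5$ dividing $N(m,n)$ already satisfies the $j$-invariant relation in the conclusion, so it will suffice to show that $N(m,n)$ has at least two distinct prime factors $\ge 5$ whenever $E$ is not a twist of a curve in the 14a class.

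First I would verify that the two factors $A := n^2+mn+m^2$ and $B := 4n^2-2mn+m^2$ share no prime divisor $\ge 5$. Indeed, if $\ell \ge 5$ divides both then $\ell \mid 4A-B = 3m(2n+m)$, so $\ell$ divides $m$ or $2n+m$; either way, combined with $\ell \mid A$, one deduces $\ell \mid \gcd(m,n)=1$, a contradiction. Consequently the primes $\ge 5$ dividing $N(m,n)$ partition between $A$ and $B$, so if $N(m,n)$ has at most one such prime then one of $A$ or $B$ must be supported only on $\{2,3\}$.

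Next, I would interpret $A$ and $B$ as norms from $\mathbb{Z}[\zeta_3]$, writing $A = N(m - n\zeta_3)$ and $B = N(m + 2n\zeta_3)$. Since $(3) = (1-\zeta_3)^2$ ramifies and $(2)$ is inert, the equations $A = 3^b$ and $B = 2^a 3^b$ translate to explicit $S$-unit equations with $S = \{(1-\zeta_3),(2)\}$. Higher powers of the generators force $3 \mid \gcd(m,n)$ or $2 \mid \gcd(m,n)$, and after accounting for the unit group $\mu_6$, the coprimality of $m$ and $n$, and the non-degeneracy conditions $mn(m-n)(2n+m)\neq 0$, I expect the enumeration to yield only the four pairs $(1,-1)$, $(1,-2)$, $(2,1)$, $(4,1)$ up to the involution $(m,n)\mapsto(-m,-n)$, which produces an isomorphic curve.

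The last step is to verify that each of these four pairs does give a curve in the 14a isogeny class up to twist. For this I would compute $j(E)$ explicitly using~\eqref{eq:18isogeny} together with the 3-isogenies relating $E^\prime$, $E$, and $E^{\prime\prime}$, minimize the Weierstrass models, and compare with the six $j$-invariants appearing in the 14a class. This final case-by-case verification is the main obstacle: it is not mathematically deep, but it is computationally delicate, because~\eqref{eq:18isogeny} parametrizes $E^\prime$ rather than the middle curve $E$ appearing in the statement, and minimization at $2$ and $3$ changes the coefficients in ways that must be tracked carefully.
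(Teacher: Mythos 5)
Your proposal matches the paper's proof in structure: reduce to showing $N(m,n)=(n^2+nm+m^2)(4n^2-2nm+m^2)$ has two distinct prime factors $\geq 5$, enumerate the finitely many exceptional coprime pairs (what the paper simply calls ``an elementary exercise''; your coprimality/$\mathbb{Z}[\zeta_3]$ argument is a correct formalization that, combined with the nondegeneracy conditions, yields exactly the paper's list), and then invoke Lemma~\ref{lem:18isogtype2primes} after identifying those pairs with 14a. The one worry you flag as the ``main obstacle'' --- that \eqref{eq:18isogeny} parametrizes $E'$ rather than $E$ --- is not actually an obstacle: since 14a is an isogeny class, $E$ is a twist of a 14a curve if and only if $E'$ is, so it suffices to compare $j(E')$ (a twist-invariant, so no model minimization is needed) against the six $j$-invariants occurring in 14a.
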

\begin{proof}
It is an elementary exercise to show that the only coprime pairs $(n,m)$ for which there do not exist distinct primes $\ell_1,\ell_2  \geq 5$ dividing $(n^2+nm+m^2)(4n^2-2nm+m^2)$ are $\pm(1,1)$, $\pm(1,-2)$, $\pm(1,-1)$, $\pm(1,2)$, $\pm(2,-1)$, and $\pm(1,4)$. The first two pairs correspond to singular curves and the final four pairs correspond to curves in the isogeny class $14a$ or twists of such curves by $-3$, and the result then follows from Lemma \ref{lem:18isogtype2primes}.
\end{proof}

\begin{lemma}
\label{lem:18isogtype3primes}
If $p \ne 3$ divides the denominator of $\Delta_{E^\prime}/\Delta_{E^{\prime\prime}} = \frac{m^8n^{16}}{(m-n)^{16}(m+2n)^8}$, then $E$ has (potential) multiplicative reduction at $p$ with $9v_{p}(j(E^{\prime})) = 3 v_{p}(j(E))=v_{p}(j(E^{\prime\prime}))$. The same holds for $p = 3$ if it divides  the denominator of $\Delta_{E^\prime}/\Delta_{E^{\prime\prime}}$ to order greater than $24$.
\end{lemma}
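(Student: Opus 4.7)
The plan is to mirror the strategy of Lemma~\ref{lem:18isogtype2primes}, now focusing on primes dividing the factors $(m-n)$ or $(m+2n)$ that make up the denominator $(m-n)^{16}(m+2n)^8$ of $\Delta_{E'}/\Delta_{E''}$. The key observation is that these factors appear to nine times higher exponent in $\Delta_{E''}$ than in $\Delta_{E'}$, which will force us into case~(i) of Lemma~\ref{lem:9isog mult vals} (as opposed to case~(ii), which controlled Lemma~\ref{lem:18isogtype2primes}).

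First I would dispose of the main case $p \neq 2,3$. Using $\gcd(m,n)=1$ together with the elementary resultant identities $\gcd(m-n, m+2n) \mid 3n$, $\gcd(m-n, n^2+nm+m^2) \mid 3m^2$, and analogous statements for $4n^2-2nm+m^2$, one sees that such a $p$ divides exactly one of $(m-n)$ or $(m+2n)$ and is coprime to $mn$ and to both ``symmetric'' factors of the discriminants. A direct substitution of $m \equiv n$ (respectively $m \equiv -2n$) into $c_4(E')$ computed from the model \eqref{eq:18isogeny} gives $c_4(E') \equiv 9n^{12} \pmod{m-n}$ and $c_4(E') \equiv 144 n^{12} \pmod{m+2n}$, so $p \nmid c_4(E')$. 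Hence $E'$ has multiplicative reduction at $p$, and since $E$ and $E''$ are isogenous to $E'$ they also have (potential) multiplicative reduction at $p$. Reading off $v_p(\Delta_{E''}) = 9\,v_p(\Delta_{E'})$ from the explicit factorizations and using $v_p(j) = -v_p(\Delta^{\min})$ for multiplicative reduction, I get $v_p(j(E'')) = 9\,v_p(j(E'))$. By Lemma~\ref{lem:9isog mult vals}, only case~(i) is consistent with this ratio, yielding the desired chain $9 v_p(j(E')) = 3 v_p(j(E)) = v_p(j(E''))$.

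For $p=2$ the Weierstrass equations may be non-minimal (when $m \equiv n \pmod 2$), but rather than pass to minimal models I would compute $c_4$ directly on the given models \eqref{eq:18isogeny} and \eqref{eq:18isogeny2} and check that $v_2(c_4(E')) = v_2(c_4(E'')) = 0$. Because $v_2(j) = 3v_2(c_4) - v_2(\Delta)$ is model-independent and the non-minimality shifts $v_2(\Delta_{E'})$ and $v_2(\Delta_{E''})$ by the same integer $12$, the ratio $v_2(j(E''))/v_2(j(E')) = 9$ is preserved and the argument concludes exactly as in the generic case.

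The case $p=3$ is the main subtlety. When $m \equiv n \pmod 3$, the model \eqref{eq:18isogeny2} is non-minimal at $3$ by $24$ in the discriminant and $E'$ may have additive reduction at $3$ rather than multiplicative. The hypothesis that $3$ divides the denominator of $\Delta_{E'}/\Delta_{E''}$ to order strictly greater than $24$ is chosen precisely so that $v_3(j(E'))$ remains negative after accounting for this shift, forcing $E'$ (and hence $E$, $E''$) to have potential multiplicative reduction at $3$. Once this is secured, the same ratio-of-discriminants argument delivers the conclusion. The main obstacle throughout is the bookkeeping of non-minimality at the small primes $2$ and $3$; all the underlying resultant identities are routine and easily verified by a symbolic computation in \verb^Magma^.
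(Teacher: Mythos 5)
Your $p \geq 5$ argument is essentially the paper's proof and is correct: the resultant computations show $p \nmid c_4(E')$ (so $E'$ has multiplicative reduction), the factored discriminant formulas give $v_p(\Delta_{E''}) = 9\,v_p(\Delta_{E'})$ on models minimal at $p$, and Lemma~\ref{lem:9isog mult vals} then singles out case~(i). The only cosmetic difference is that you phrase the comparison as a ratio $v_p(j(E'')) = 9\,v_p(j(E'))$, whereas the paper only needs the \emph{inequality} $v_p(\Delta_{E'}^{\min}) < v_p(\Delta_{E''}^{\min})$: the three cases of Lemma~\ref{lem:9isog mult vals} correspond precisely to $v_p(\Delta_{E'}^{\min})$ being less than, equal to, or greater than $v_p(\Delta_{E''}^{\min})$. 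This looks like a minor stylistic choice, but it matters below.

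The $p=2$ step has a genuine gap. You assert that $v_2(c_4(E')) = v_2(c_4(E'')) = 0$ on the given models, but this is false whenever $2 \mid m+2n$ (i.e.\ $m$ even, $v_2(m)=1$): for instance $(m,n)=(2,1)$ gives $v_2(c_4(E'))=4$ and $v_2(\Delta_{E'})=13$, so the model is genuinely non-minimal at $2$ with $v_2(\Delta_{E'}^{\min})=1$. In that case $v_2(j(E')) \neq -v_2(\Delta_{E'})$ on the given model, so you cannot read off the $j$-invariant valuation from $\Delta_{E'}$. Your fallback sentence---that shifting both $v_2(\Delta_{E'})$ and $v_2(\Delta_{E''})$ by $12$ ``preserves the ratio $=9$''---is also incorrect as stated: a common additive shift preserves the \emph{difference} of valuations, not their ratio, and indeed in the example above $v_2(\Delta_{E''})/v_2(\Delta_{E'}) = 21/13 \neq 9$ on the given models. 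What the common shift \emph{does} preserve is the inequality $v_2(\Delta_{E'}) < v_2(\Delta_{E''})$, and that is exactly why the paper phrases the final step in terms of the inequality rather than the ratio. Switching your last step to ``$v_p(\Delta_{E'}^{\min}) < v_p(\Delta_{E''}^{\min})$ forces case~(i)'' repairs the $p=2$ case without any extra computation.

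The $p=3$ step is asserted rather than proved. You say the hypothesis is ``chosen precisely so that $v_3(j(E'))$ remains negative after accounting for this shift,'' but no verification is offered, and the phrase ``this shift'' is misplaced: the $24$-shift applies to $E''$ (which is non-minimal at $3$ when $m \equiv n \pmod 3$), not to $E'$, whose model is minimal at $3$. The paper's route is concrete: the hypothesis forces $m \equiv n \pmod 9$ or $m \equiv -2n \pmod 9$, which gives $v_3(\Delta_{E''}^{\min}) \geq 8$; since a curve with a twist of good reduction at $3$ has $v_3(\Delta^{\min}) \in \{0,6\}$, Lemma~\ref{lem:18isog good twist} (in contrapositive) forces potential multiplicative reduction, and then the final comparison becomes $v_3(\Delta_{E'}) < v_3(\Delta_{E''}) - 24$ to account for the asymmetric non-minimality. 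You would need to carry out an analogous computation to make your sketch rigorous.
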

\begin{proof}
If $p \ne 3$ divides the denominator of $\Delta_{E^\prime}/\Delta_{E^{\prime\prime}}$, then $E$ must have bad reduction at $p$. The same holds for $p = 3$ if it divides the denominator of $\Delta_{E^\prime}/\Delta_{E^{\prime\prime}}$ to order greater than $24$. For $p \ge 5$, taking the resultant of each of $(m-n)$ and $(m+2n)$ with $c_4(E^{\prime})$ shows that $E^\prime$ must have multiplicative reduction at $p$. For $p = 2$, it suffices, as noted above, that $E^{\prime}$ always has multiplicative reduction at $2$.

For $p = 3$, we will have $p$ dividing the denominator of $\Delta_{E^\prime}/\Delta_{E^{\prime\prime}}$ to order greater than $24$ if and only if $m \equiv n \pmod{9}$ or $m \equiv -2n \pmod{9}$. In each of these cases, we will have $v_3(\Delta_{E^{\prime\prime}}) \ge 32$ and $v_3(\Delta_{E_{\mathrm{min}}^{\prime\prime}}) \ge 8$. As a result, $E^{\prime\prime}$ can't have a twist of good reduction, since that would require $v_3(\Delta_{E_{\mathrm{min}}^{\prime\prime}}) = 6$. Applying Lemma \ref{lem:18isog good twist}, we find that $E$ must have potentially multiplicative reduction.

Finally, we observe that by Lemma \ref{lem:9isog mult vals}, for $p \ne 3$, we will have $9v_{p}(j(E^{\prime})) = 3 v_{p}(j(E))=v_{p}(j(E^{\prime\prime}))$ if and only if $v_p(\Delta_{E^\prime}) < v_p(\Delta_{E^{\prime\prime}})$ and for $p = 3$, we will have $9v_{p}(j(E^{\prime})) = 3 v_{p}(j(E))=v_{p}(j(E^{\prime\prime}))$ if and only if $v_p(\Delta_{E^\prime}) < v_p(\Delta_{E^{\prime\prime}}) - 24$, since the valuation of $p$ in the denominator of the $j$-invariant will be the same as the valuation of the $p$ in the minimal discriminant.
\end{proof}

\begin{corollary}
\label{cor:18isogtype3primes}
If $E$ is not a twist of a curve in the isogeny class 14a, then there exists a prime $\ell_3$ such that $E$ has (potential) multiplicative reduction at $\ell_3$ with $9v_{\ell_i}(j(E^{\prime})) = 3 v_{\ell_i}(j(E))=v_{\ell_i}(j(E^{\prime\prime}))$.
\end{corollary}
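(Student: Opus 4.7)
The plan is to invoke Lemma \ref{lem:18isogtype3primes}, which produces the required prime $\ell_3$ as soon as some prime divides the (reduced) denominator of
\[
\frac{\Delta_{E'}}{\Delta_{E''}} = \frac{m^{8}n^{16}}{(m-n)^{16}(m+2n)^{8}}
\]
subject to its hypotheses: for $\ell_3 \ne 3$, it suffices that $\ell_3$ appears at all in the reduced denominator, while for $\ell_3 = 3$ one needs its contribution to exceed $24$. Since $\gcd(m,n) = 1$ forces $\gcd(m-n,mn) = 1$ and $\gcd(m+2n,mn)\mid 2$, every prime $\ell \ge 5$ dividing $(m-n)(m+2n)$ survives into the reduced denominator; a short $2$-adic check handles the prime $2$ as well, except in the case $v_2(m)\ge 2$, where the contributions of $(m+2n)^8$ cancel against those of $m^{8}n^{16}$.

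The proof then reduces to enumerating the coprime pairs $(m,n)$ for which $(m-n)(m+2n)$ has no prime factor $\ge 5$, the prime $2$ fails to appear in the reduced denominator, and the $3$-adic condition $16\,v_3(m-n) + 8\,v_3(m+2n) \le 24$ holds (noting that $\gcd(m,n)=1$ forces $3\nmid mn$ whenever $3\mid (m-n)(m+2n)$, so no $3$-adic cancellation occurs). Writing $m-n = \pm 2^a 3^b$ and $m+2n = \pm 2^c 3^d$, the identity $3n = (m+2n)-(m-n)$ together with $\gcd(m,n) = 1$ forces $\min(b,d) = 0$ (else $3 \mid m,n$) and bounds $a,c$ via the $2$-adic analysis; the remaining sign choices then leave only finitely many candidate pairs.

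I expect the main technical point to be the borderline case $v_3(m-n) = v_3(m+2n) = 1$, where the $3$-adic valuation of the reduced denominator is exactly $24$, just failing the strict inequality demanded by Lemma \ref{lem:18isogtype3primes}; this case produces the pair $(1,-2)$ and its sign negatives, which must be handled individually. After collecting all exceptional pairs, I verify by direct substitution into the Weierstrass model \eqref{eq:18isogeny} (and comparison of $j$-invariants and conductors, aided by a short \verb^Magma^ computation) that each such pair yields either a singular curve or a curve in the isogeny class $14a$ up to quadratic twist, which is precisely the case excluded in the statement of the corollary. The desired $\ell_3$ then exists for every remaining $(m,n)$, giving the result.
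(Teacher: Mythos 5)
Your approach matches the paper's: invoke Lemma \ref{lem:18isogtype3primes} and reduce to showing that the reduced denominator of $\Delta_{E'}/\Delta_{E''} = m^8n^{16}/\bigl((m-n)^{16}(m+2n)^8\bigr)$ is neither $\pm 1$ nor $\pm 3^{24}$, then enumerate the finitely many exceptional coprime pairs $(m,n)$ and check they all give singular curves or curves in class $14a$ (possibly twisted by $-3$). The paper compresses this into ``Elementary arguments show\dots'' and then lists the exceptional pairs as $(n,m) = \pm(-2,1), \pm(1,4)$.

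Two small slips in your writeup are worth flagging. First, the claim that the identity $3n = (m+2n) - (m-n)$ together with $\gcd(m,n)=1$ ``forces $\min(b,d) = 0$'' is wrong: from $3m = (m+2n) + 2(m-n)$ and $3n = (m+2n) - (m-n)$ one gets $3 \mid (m-n) \iff 3 \mid (m+2n)$, so $b = 0 \iff d = 0$; what coprimality actually rules out is $\min(b,d) \ge 2$ (which would give $3 \mid m$ and $3 \mid n$). The correct conclusion is $\min(b,d) \le 1$, and combined with $b=0 \iff d=0$ this gives $16b + 8d \in \{0\} \cup \{24, 32, 40, \dots\}$, which is exactly why $\pm 1$ and $\pm 3^{24}$ are the only problematic values. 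Your later discussion of the ``borderline case $v_3(m-n)=v_3(m+2n)=1$'' tacitly corrects this, but as written the two sentences contradict each other. Second, your named exceptional pair $(m,n)=\pm(1,-2)$ is only half the list; $(m,n) = \pm(4,1)$ (the paper's $(n,m)=\pm(1,4)$) is also a borderline pair with $b=d=1$, where the $2$-part of $(m+2n)^8 = (2\cdot 3)^8$ cancels against $m^8 = 4^8$, leaving reduced denominator $3^{24}$. Your phrase ``after collecting all exceptional pairs'' suggests you would have caught this in the full check, but the explicit writeup misses it. (Relatedly, your $2$-adic exception case should include $n$ even, not only $v_2(m)\ge 2$: when $n$ is even, $m-n$ and $m+2n$ are both odd, so $2$ never appears in the reduced denominator.)
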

\begin{proof}
By Lemma \ref{lem:18isogtype3primes}, it suffices to show that the denominator of $\Delta_{E^\prime}/\Delta_{E^{\prime\prime}} = \frac{m^8n^{16}}{(m-n)^{16}(m+2n)^8}$ is not equal to $\pm 1$ or $\pm 3^{24}$. Elementary arguments show that this is the case for $(n,m)~\ne~\pm~(-2,1)$, $\pm (1,4)$, which correspond to curves in the isogeny class $14a$ or twists~of~such~curves~by~$-3$.\end{proof}

As a consequence of these results, we obtain the following:

\begin{proposition}\label{lem:18-isogeny}
If $E$ is not a quadratic twist of a curve in the isogeny class $14a$, then $T_{-3}(\phi_1) \cap T_{-1}(\phi_2)$ is non-empty.
\end{proposition}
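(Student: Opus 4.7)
My plan is to construct, via weak approximation, a quadratic twist $s \in \Q^*/\Q^{*2}$ for which the global Selmer ratios are exactly $c(\phi_{1,s}) = 3^{-3}$ and $c(\phi_{2,s}) = 3^{-1}$, following the template set by the proof of Theorem \ref{thm:9-isogeny}. Under the standing hypothesis, Corollary \ref{cor:18isogtype2primes} furnishes two distinct primes $\ell_1, \ell_2 \geq 5$ of multiplicative reduction at which $v_{\ell_i}(j(E'))=v_{\ell_i}(j(E''))$. By Lemma \ref{lem:9isog mult vals} these primes fall into case (ii), and Lemma \ref{lem:split-mult} then gives $c_{\ell_i}(\phi_{1,s}) = c_{\ell_i}(\phi_{2,s}) = 1/3$ whenever $E_s$ has split multiplicative reduction at $\ell_i$. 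I will require $s$ to satisfy precisely this condition at $\ell_1$ and $\ell_2$, contributing a factor of $3^{-2}$ to both global Selmer ratios.

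At every other prime $p \nmid 3\infty$ of bad reduction, I want trivial contribution $c_p(\phi_{1,s})=c_p(\phi_{2,s})=1$. For primes of additive, potentially good reduction this is automatic by Lemma \ref{lem:18isog good twist}; for the remaining primes of (potential) multiplicative reduction---including the type B prime $\ell_3$ supplied by Corollary \ref{cor:18isogtype3primes}, together with any additional primes coming from the factorization $mn(m-n)(m+2n)(n^2+nm+m^2)(4n^2-2nm+m^2)$---Lemma \ref{lem:two-isogenies-trivial} supplies a local squareclass with the required trivial local Selmer ratios. At the places $\{3,\infty\}$, I will arrange $c_3(\phi_{1,s})\,c_\infty(\phi_{1,s}) = 3^{-1}$ and $c_3(\phi_{2,s})\,c_\infty(\phi_{2,s}) = 3$: Lemma \ref{lem:two-isogenies-trivial} yields $s$ at $3$ with $\{c_3(\phi_{1,s}),c_3(\phi_{2,s})\} = \{1,3\}$, while Cartier duality (Lemma \ref{lem:duality}(ii)) together with Proposition \ref{prop:local-selmer-comp} gives $\{c_\infty(\phi_{1,s}),c_\infty(\phi_{2,s})\} = \{1, 1/3\}$; since the squareclass of $s$ at $3$ and the sign of $s$ at $\infty$ can be prescribed independently, I can select the orientation that assigns $c_3(\phi_{1,s})=1$ and $c_\infty(\phi_{1,s})=1/3$ (with the dual values for $\phi_2$), producing the desired products.

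By weak approximation in $\Q^*/\Q^{*2}$, the finitely many local conditions imposed above can be realized by a single global $s$; at all remaining primes, Lemma \ref{lem:twistofgoodred} gives $c_p(\phi_{i,s})=1$. Multiplying local contributions then yields
\[
c(\phi_{1,s}) = \tfrac{1}{3}\cdot\tfrac{1}{3}\cdot 1 \cdot \tfrac{1}{3} = 3^{-3} \quad\text{and}\quad c(\phi_{2,s}) = \tfrac{1}{3}\cdot\tfrac{1}{3}\cdot 1 \cdot 3 = 3^{-1},
\]
so $s \in T_{-3}(\phi_1) \cap T_{-1}(\phi_2)$. The global parity constraint $t(\phi_{1,s}) \equiv t(\phi_{2,s}) \pmod 2$ of Lemma \ref{ratios mod 2}(i) is automatically satisfied since $-3$ and $-1$ are both odd. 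The main obstacle is the orientation-matching at $\{3,\infty\}$: one must verify that among the four combinations obtained by toggling the squareclass at $3$ and the sign at $\infty$, the target pair $(3^{-1}, 3)$ is actually attained. This reduces to the same combinatorial check appearing in the $9$-isogeny proof, so no new difficulty arises.
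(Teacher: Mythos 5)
The central difficulty in this proposition is to break the symmetry between $\phi_1$ and $\phi_2$: both must have negative log-Selmer ratios, but $\phi_1$ must be strictly more negative. Your proposal throws away exactly the tool the paper uses to do this. You force $\ell_3$ (the prime from Corollary \ref{cor:18isogtype3primes}) to contribute trivially, and then try to recover the asymmetry at the places $\{3,\infty\}$ by ``selecting the orientation'' so that $c_3(\phi_{1,s})c_\infty(\phi_{1,s})=1/3$ while $c_3(\phi_{2,s})c_\infty(\phi_{2,s})=3$. That step is the gap. Lemma \ref{lem:two-isogenies-trivial} asserts only the equality of \emph{sets} $\{c_3(\phi_{1,s}),c_3(\phi_{2,s})\}=\{1,3\}$; it gives you no control over which of $\phi_1,\phi_2$ receives the value $3$. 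The sign of $s$ lets you toggle the orientation at $\infty$ freely, but the two orientations at $3$ are not both achievable in general. Running through the four combinations: if fate hands you $c_3(\phi_{1,s})=3$, then the two sign choices produce $(c_3c_\infty(\phi_1),c_3c_\infty(\phi_2))\in\{(3,1/3),\ (1,1)\}$, and your target $(1/3,3)$ is unreachable. You flag this as ``the main obstacle'' but then dismiss it as ``the same combinatorial check appearing in the $9$-isogeny proof.'' That is not right: the $9$-isogeny proof only needs \emph{one} of $(1/3,3)$ and $(3,1/3)$ to occur, and correspondingly concludes ``either $E_s$ or $E'_s$'' in Theorem \ref{thm:9-isogeny}; here the conclusion names $\phi_1$ and $\phi_2$ specifically, so you need a definite orientation.

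The paper sidesteps this entirely. It keeps $\ell_3$ in play: when $\ell_3\neq 3$, the asymmetric valuations $9v_{\ell_3}(j(E'))=3v_{\ell_3}(j(E))=v_{\ell_3}(j(E''))$ together with Lemma \ref{lem:split-mult} pin down $c_{\ell_3}(\phi_{1,s})=1/3$ and $c_{\ell_3}(\phi_{2,s})=3$, and at $\{3,\infty\}$ one then only needs both products $c_3c_\infty(\phi_{i,s})=1$, which \emph{is} achievable regardless of which isogeny gets $c_3=3$ (choose the sign that cancels it). When $\ell_3=3$, the orientation at $3$ is no longer ambiguous because $E$ has multiplicative reduction there, and \cite[Table 1]{DD} produces the definite values directly. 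To repair your argument you would need either to cite the $\ell_3=3$ reduction-type information to fix the orientation at $3$, or, more simply, to keep the nontrivial contribution from $\ell_3$ as the paper does.
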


\begin{proof}
By Corollary \ref{cor:18isogtype2primes}, we may find primes $\ell_1$ and $\ell_2$ that fall into case (ii) of Lemma \ref{lem:9isog mult vals}.  Thus, by Lemma \ref{lem:split-mult}, there is a twist $s$ such that $c_{\ell_j}(\phi_{i,s}) = 1/3$ for all $i,j  \in \{1,2\}$.

By Corollary \ref{cor:18isogtype3primes}, we may also find a prime $\ell_3$ such that $v_{\ell_3}(j(E^{\prime\prime})) = 3 v_{\ell_3}(j(E)) = 9 v_{\ell_3}(j(E^\prime))$. 

If $\ell_3 \ne 3$, then $c_{\ell_3}(\phi_{1,s}) = 1/3$ and $c_{\ell_3}(\phi_{2,s}) = 3$ for some $s$ by Lemma  \ref{lem:split-mult}. By Lemma \ref{lem:two-isogenies-trivial} combined with Lemma \ref{prop:local-selmer-comp}, we may additionally find $s$ such that $c_3(\phi_{1,s})c_\infty(\phi_{1,s}) = c_3(\phi_{2,s})c_\infty(\phi_{2,s}) = 1$. Taking $s$ satisfying all of the above further satisfying $c_p(\phi_{1,s}) = 1 = c_p(\phi_{2,s})$ for all $p \nmid 3\ell_1\ell_2\ell_3\infty$, we will then have $c(\phi_{1,s}) = 1/27$ and $c(\phi_{2,s}) = 1/3$,  showing that $T_{-3}(\phi_1) \cap T_{-1}(\phi_2)$ is non-empty.

If $\ell_3 = 3$, then by Table 1 in \cite{DD}, there is some $s$ such that $c_3(\phi_{1,s}) = 1$ and $c_\infty(\phi_{2,s}) = 3$. By Lemma \ref{prop:local-selmer-comp}, we may therefore find $s$ such that $c_3(\phi_{1,s})c_\infty(\phi_{1,s}) = 1/3$ and $c_3(\phi_{2,s})c_\infty(\phi_{2,s}) = 3$. The result then follows as before by taking $s$ satisfying all of the above further satisfying $c_p(\phi_{1,s}) = 1 = c_p(\phi_{2,s})$ for all $p \nmid 3\ell_1\ell_2\ell_3\infty$.
\end{proof}

\subsection{Proof of Theorem \ref{thm:18-isogeny}}

To prove Theorem \ref{thm:18-isogeny}, we need the following result concerning 2-Selmer groups of elliptic curves with rational two-torsion. 

\begin{theorem}\label{thm:erdoskac}
Let $E/\mathbb{Q}$ be an elliptic curve such that $E(\mathbb{Q})[2] \simeq \mathbb{Z}/2\mathbb{Z}$ and let $E^\prime$ be the curve which is $2$-isogenous to $E$.  Suppose that $r_2 \geq 0$ and $\gamma$ is a specified class in $\prod_{v\mid 2N_E \infty} \mathbb{Q}_v^*/\mathbb{Q}_v^{*2}$.  If $\Delta_E \Delta_{E^\prime}$ is not a square, then at least $1/2$ of the twists of $E$ by $s\in \gamma$ have $|\mathrm{Sel}_2(E_s)| \geq 2^{r_2}$.
\end{theorem}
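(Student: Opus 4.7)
The plan is to combine a $2$-isogeny descent with an Erd\H{o}s--Kac-style equidistribution result for the global Selmer ratio $c(\phi_s)$ attached to the natural $2$-isogeny $\phi\colon E\to E'$ (with kernel the rational $2$-torsion of $E$, and dual $\hat\phi$). The factorization $[2]=\hat\phi_s\circ\phi_s$ yields an exact sequence
\[
0\to E'_s(\Q)[\hat\phi_s]/\phi_s E_s(\Q)[2]\to \mathrm{Sel}_{\phi_s}(E_s)\to\mathrm{Sel}_2(E_s)
\]
whose first term has order at most $2$. Combined with Proposition~\ref{prop:selmer-ratio} and the trivial bound $|\mathrm{Sel}_{\hat\phi_s}(E'_s)|\ge 1$, this gives $|\mathrm{Sel}_2(E_s)|\ge\tfrac14 c(\phi_s)$, so it suffices to show that for at least half of $s\in\gamma$ (ordered by height) one has $c(\phi_s)\ge 2^{r_2+2}$.

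I would next analyze $c(\phi_s)$ locally. The factors at places dividing $2N_E\infty$ depend only on $\gamma$; absorbing them into a constant $C(\gamma)$, we have $c(\phi_s) = C(\gamma)\prod_{p\mid s,\,p\nmid 2N_E}c_p(\phi_s)$. For such a prime $p$, the twists $E_s$ and $E'_s$ acquire Kodaira type $\mathrm{I}_0^*$ reduction, with $c_p(E_s)=4$ or $2$ according as $(\Delta_E/p)=+1$ or $-1$, and similarly for $E'_s$. Hence $c_p(\phi_s)=c_p(E'_s)/c_p(E_s)\in\{2,1,1/2\}$, with $c_p(\phi_s)=2$ exactly when $(\Delta_E/p)=-1$ and $(\Delta_{E'}/p)=+1$, and $c_p(\phi_s)=1/2$ in the opposite case. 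The hypothesis that $\Delta_E\Delta_{E'}$ is not a square in $\Q$ is precisely what guarantees that the Dirichlet character $\chi(p):=(\Delta_E\Delta_{E'}/p)$ is non-trivial, so the mixed cases $c_p(\phi_s)\ne 1$ each occur on a positive-density set of primes.

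Writing $\log_2 c(\phi_s) = \log_2 C(\gamma)+N^+(s)-N^-(s)$, where $N^{\pm}(s)$ counts primes $p\mid s$ with $(\Delta_E/p)=\mp 1,\,(\Delta_{E'}/p)=\pm 1$, an Erd\H{o}s--Kac-type theorem shows that, as $s\in\gamma$ varies over squarefree classes up to height $X$, the random variable $N^+(s)-N^-(s)$ is either asymptotically normal of mean $0$ and variance $\sim\tfrac12\log\log X$ (when both $\Delta_E$ and $\Delta_{E'}$ are non-squares in $\Q$), or diverges almost surely to $+\infty$ (when $\Delta_{E'}$ is a square in $\Q$, in which case $N^-(s)\equiv 0$ while $N^+(s)\sim\tfrac12\log\log X$); the case $\Delta_E\in\Q^{\times 2}$ is excluded by the hypothesis $E(\Q)[2]=\Z/2\Z$. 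In either case the asymptotic density of $s\in\gamma$ with $\log_2 c(\phi_s)\ge r_2+2$ is at least $\tfrac12$, completing the proof. The main obstacle will be rigorously establishing this Erd\H{o}s--Kac behaviour in the quadratic-twist setting; the cleanest approach is a moment computation for $s\mapsto c(\phi_s)^k$, whose (suitably twisted) mean values are controlled by $L(1,\chi^{|k|})$, finite and non-zero by non-triviality of $\chi$, whereupon standard arguments \`a la Selberg or Granville--Soundararajan yield the required central limit theorem.
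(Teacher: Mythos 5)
The paper proves this theorem by a one-line citation to Xiong \cite{xiong} and Klagsbrun--Lemke Oliver \cite{KlagsbrunLO}; your sketch faithfully reconstructs the strategy underlying those works, so the approaches agree. The three moves --- bounding $|\mathrm{Sel}_2(E_s)|$ from below by $c(\phi_s)$ via the $\phi$-descent exact sequence together with Proposition~\ref{prop:selmer-ratio}, computing $c_p(\phi_s)$ at good primes $p\mid s$ in terms of $(\Delta_E/p)$ and $(\Delta_{E'}/p)$, and feeding this into a central-limit/Erd\H{o}s--Kac statement for $\log_2 c(\phi_s)$ --- are all sound, and you correctly identify the hypothesis $\Delta_E\Delta_{E'}\notin\Q^{*2}$ as exactly the non-degeneracy condition that makes the two Legendre symbols jointly equidistributed (or one of them identically $1$). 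Two minor points. First, since $E_s(\Q)[\phi_s]$ and $E'_s(\Q)[\hat\phi_s]$ both have order exactly $2$ (any Galois-stable subgroup of order $2$ is automatically pointwise rational, as $\Aut(\Z/2\Z)$ is trivial), the torsion ratio in Proposition~\ref{prop:selmer-ratio} equals $1$, and the sharper inequality $|\mathrm{Sel}_2(E_s)|\geq\tfrac12\,c(\phi_s)$ holds; your factor $\tfrac14$ is merely conservative. Second, the closing aside about controlling moments via $L(1,\chi^{|k|})$ is not quite the right object, since $\chi$ has order $2$ so $\chi^{|k|}$ is trivial or equal to $\chi$; what is actually needed is a sieve/Chebotarev estimate for the joint distribution of the pair $\bigl((\Delta_E/p),(\Delta_{E'}/p)\bigr)$ over primes $p\mid s$, which is precisely what the cited works establish, and which you rightly flag as the technical heart of the argument.
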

\begin{proof}
This is essentially due to Xiong \cite{xiong} and Klagsbrun and Lemke Oliver \cite{KlagsbrunLO}. While neither result explicitly allows for restricting to $s \in \gamma$, the arguments in both cases can easily be extended to allow for this.
\end{proof}

The following lemma shows that the hypothesis of Theorem \ref{thm:erdoskac} that $\Delta_E \Delta_{E^\prime}$ is not a square is always satisfied in the cases in which we wish to apply the result.

\begin{lemma}\label{lem:notsquare}
Suppose that $E/\mathbb{Q}$ is an elliptic curve such that $E(\mathbb{Q})[2] \simeq \mathbb{Z}/2\mathbb{Z}$ and additionally suppose that $E$ has a rational $3$-isogeny.  If $E^\prime/\mathbb{Q}$ is the curve that is $2$-isogenous to $E$, then $\Delta_E \Delta_{E^\prime}$ is not a square.
\end{lemma}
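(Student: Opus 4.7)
The plan is to use the rational $2$-torsion to put $E$ in the form $y^2 = x^3 + ax^2 + bx$, with $2$-isogenous curve $E'\colon y^2 = x^3 - 2ax^2 + (a^2-4b)x$.  Direct computation of the discriminants gives $\Delta_E = 16b^2(a^2-4b)$ and $\Delta_{E'} = 256b(a^2-4b)^2$, so that $\Delta_E \Delta_{E'} \equiv b(a^2-4b) \pmod{\Q^{*2}}$; the lemma is thus equivalent to the claim that $b(a^2-4b) \notin \Q^{*2}$ whenever $E$ admits a rational $3$-isogeny.

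To incorporate the hypothesis, I would observe that the $3$-division polynomial $\psi_3(x) = 3x^4 + 4ax^3 + 6bx^2 - b^2$ must have a rational root $x_0$ (necessarily nonzero, since $\psi_3(0) = -b^2$).  Solving $\psi_3(x_0) = 0$ as a quadratic in $b$ yields $b = 3x_0^2 \pm 2 x_0 s$ where $s \in \Q$ satisfies $s^2 = x_0(3x_0+a)$, so $a = s^2/x_0 - 3x_0$.  Setting $u = x_0$, $t = s$, and $r = t/u$, and using the factorization $t^4 - 6t^2u^2 - 8u^3 t - 3u^4 = (t+u)^3(t-3u)$, one computes
\[b(a^2-4b) \;=\; \frac{(3u+2t)(t+u)^3(t-3u)}{u} \;\equiv\; (r+1)(2r+3)(r-3) \pmod{\Q^{*2}}.\]
The four values $r \in \{-1,-3/2,3,\infty\}$ at which the right-hand side vanishes are easily verified to correspond to degenerate $E$ (with $b = 0$ or $a^2 = 4b$), so what remains is to show that $(r+1)(2r+3)(r-3)$ is never a nonzero rational square.

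This final claim is a rank-zero statement for the elliptic curve $E_C$ given, after clearing denominators via $Y = 4y$ and $X = 2r$, by $Y^2 = X^3 - X^2 - 24X - 36 = (X+2)(X+3)(X-6)$.  I would identify $E_C$ with a curve in Cremona's isogeny class $24a$ by translating to $y^2 = x(x-1)(x-9)$ and applying V\'elu to the $2$-isogeny through $(0,0)$: the resulting curve $y^2 = x(x+4)(x+16)$ admits the minimal model $y^2 = x(x+1)(x+4)$, which is isomorphic over $\Q$ to the Cremona curve $24a1$ (namely $y^2 = (x-1)(x-2)(x+2)$), as one checks by matching $c_4$ and $c_6$.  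Since every curve in the class $24a$ has rank zero, and a short verification that the relevant doubling quartics (for example $X^4 + 8X^3 + 40X^2 + 96X + 144$ in the case of doubling to $(-2,0)$) have no rational roots rules out any rational $4$-torsion, we conclude $E_C(\Q) = \{O,(-2,0),(-3,0),(6,0)\}$, and all four points correspond to the excluded degenerate values of $r$.

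The main obstacle is the rank-zero verification for $E_C$, which I would handle by exhibiting the $2$-isogeny to $24a1$ and appealing to Cremona's tables; an explicit $2$-descent using the full rational $2$-torsion of $E_C$ would provide an alternative, if slightly more laborious, argument.
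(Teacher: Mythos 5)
Your proposal is correct and reaches the same final step as the paper---showing that a certain conductor-$24$ elliptic curve has rank $0$ with Mordell--Weil group $(\Z/2\Z)^2$---but it takes a genuinely different parametric route to get there.  The paper invokes Maier's explicit rational parametrization of $X_0(6)$, which directly gives $j(E)$ in terms of a parameter $t$ and then reads off $\Delta_E \in t(t+8)\Q^{*2}$ and $\Delta_{E'}\in(t+9)\Q^{*2}$; the auxiliary curve is $y^2=t(t+8)(t+9)$.  You instead start from the Weierstrass model $y^2=x^3+ax^2+bx$ forced by the rational $2$-torsion, impose the $3$-isogeny by solving the $3$-division polynomial $\psi_3(x_0)=0$ as a quadratic in $b$, and arrive at $\Delta_E\Delta_{E'}\equiv(r+1)(2r+3)(r-3)\pmod{\Q^{*2}}$.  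The two auxiliary curves are the same elliptic curve in disguise (the translation $t=2r-6$ identifies them), so both approaches reduce the lemma to the same rank-zero verification, and the degenerate parameter values of each match the four torsion points.  The paper's route is shorter because it outsources the parametrization to a citation; yours is more elementary and self-contained, at the cost of the division-polynomial computation.  Your identifications of $b(a^2-4b)$, of the factorization $t^4-6t^2u^2-8u^3t-3u^4=(t+u)^3(t-3u)$, and of the degenerate values all check out, as does the $2$-isogeny Vélu computation.  The one detail worth double-checking is the specific Cremona label you assign to the $2$-isogenous curve: $y^2=x(x+1)(x+4)$ has full rational $2$-torsion and so cannot be $24$a$1$ if that curve has cyclic torsion $\Z/8\Z$ (it is more likely $24$a$4$, and your $E_C$ is probably $24$a$4$'s $2$-isogeny partner or indeed the LMFDB curve $24$a$4$ itself).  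This is only a labeling point and does not affect the argument, since membership in the rank-$0$ isogeny class $24$a---or alternatively an explicit $2$-descent on $E_C$, as you note---is what is actually used.  You should also explicitly rule out rational $3$-torsion (to exclude torsion $\Z/2\Z\times\Z/6\Z$) rather than only $4$-torsion, though this is immediate from the $3$-division polynomial or from the tables.
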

\begin{proof}
As the modular curve $X_0(6)$ has genus 0, it follows from a rational parametrization given by Maier \cite{Maier} that there is some $t\in \mathbb{Q}$ such that
\[
j(E) = \frac{(t+6)^3(t^3 + 18t^2 + 84t + 24)^3}{t(t+8)^3(t+9)^2}, \quad t\neq 0,-8,-9.
\]
Thus, $E$ is a quadratic twist of a curve with discriminant $t(t+8)^3(t+9)^2 \in t(t+8) \mathbb{Q}^{*2}$.  Since taking quadratic twists changes the discriminant by sixth powers, we find that $\Delta_E \in t(t+8)\mathbb{Q}^{*2}$.  A similar computation reveals that $\Delta_{E^\prime} \in (t+9) \mathbb{Q}^{*2}$.  Thus, $\Delta_E \Delta_{E^\prime}$ is a square if and only if there is a rational $y\neq 0$ such that $y^2 = t(t+8)(t+9)$.  This equation defines an elliptic curve, which is observed to have rank 0 and Mordell-Weil group $\mathbb{Z}/2\mathbb{Z} \times \mathbb{Z}/2\mathbb{Z}$ over $\mathbb{Q}$.  The three points of order two correspond exactly to the trivial solutions $t=0,-8,-9$ ruled out above, and the lemma follows.
\end{proof}

\begin{proof}[Proof of Theorem $\ref{thm:18-isogeny}$]
Let $E'/\mathbb{Q}$ have a cyclic 18-isogeny, and recall we are trying to show that a positive proportion of twists of $E'$ have an element of order 6 in their Tate-Shafarevich groups.  In addition to the $9$-isogeny $\phi$ discussed above, it also follows that $E'$ has a rational two-torsion point.  Moreover, by Lemma \ref{lem:notsquare} $E'$ is subject to Theorem \ref{thm:erdoskac}.  

Suppose that $E'$ is not a twist of a curve in the isogeny class 14a.  By Proposition \ref{lem:18-isogeny}, the set $T_{-3}(\phi_1) \cap T_{-1}(\phi_2)$ is non-empty, so that by Proposition \ref{prop:sha-elements}, a positive proportion of $s\in T_{-3}(\phi_1) \cap T_{-1}(\phi_2)$ are such that $|\Sha(E_s')[3]| \geq 9$.  In fact, the proof of Proposition \ref{prop:sha-elements} shows that this positive proportion may be taken to be $5/6$ in this case.  Combining this with Theorem \ref{thm:erdoskac}, we find that a proportion at least $1/3$ of $s\in T_{-3}(\phi_1) \cap T_{-1}(\phi_2)$ are such that $\Sha(E_s')$ has an element of order 6.  This establishes the theorem.
\end{proof}

Theorem \ref{thm:erdoskac} and Lemma \ref{lem:notsquare} together also quickly imply the following result.

\begin{corollary}
Suppose that $E/\mathbb{Q}$ has a rational degree $6$ isogeny.  Then for any $r_2 \geq 1$ and $\epsilon>0$, $|\Sha(E_s)[2]| \geq 4^{r_2}$ for a proportion at least $1/2 - \epsilon$ of twists $E_s$.
\end{corollary}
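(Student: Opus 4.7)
The plan is to combine the $2$-Selmer lower bound of Theorem \ref{thm:erdoskac} with the rank-control afforded by the rational $3$-isogeny in the factorization of the degree-$6$ isogeny, using Theorem \ref{thm:selmer-average}. A rational cyclic $6$-isogeny factors as $\psi\circ\phi$ with $\phi\colon E\to E'$ of degree $2$ and $\psi$ of degree $3$, so $E$ has both a rational $2$-torsion point and a rational $3$-isogeny. I may assume $E(\Q)[2]\simeq\Z/2\Z$, since the full rational $2$-torsion case is already covered by \cite{Smith,FengXiong}. Because $E$ carries a rational $3$-isogeny, Lemma \ref{lem:notsquare} guarantees that $\Delta_E\Delta_{E'}$ is not a square, so Theorem \ref{thm:erdoskac} applies to $E$.

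First I will bound the rank of most twists uniformly. By Corollary \ref{cor:loc-const}, $c(\psi_s)$ depends only on the class of $s$ in $\prod_{v\mid 3\Delta_E\infty}\Q_v^\times/\Q_v^{\times 2}$, so as $s$ varies over $\Q^\times/\Q^{\times 2}$ the integer $m_s$ defined by $c(\psi_s)=3^{m_s}$ takes only finitely many values $m_s\in\{-M,\dots,M\}$ with $M=M(E)$. For each achievable $m$, Theorem \ref{thm:selmer-average} applied with $\Sigma=T_m(\psi)$ gives $\mathrm{Avg}_{s\in T_m(\psi)}\,\rk E_s(\Q)\le|m|+3^{-|m|}\le M+1$. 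Averaging over the partition $\Q^\times/\Q^{\times 2}=\bigsqcup_m T_m(\psi)$ therefore bounds the overall average rank by $M+1$, and Markov's inequality yields $R:=\lceil(M+1)/\epsilon\rceil$ such that at most a proportion $\epsilon$ of twists satisfy $\rk E_s(\Q)>R$.

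Next, set $r_2':=2r_2+R+1$. For each square class $\gamma\in\prod_{v\mid 2N_E\infty}\Q_v^\times/\Q_v^{\times 2}$, Theorem \ref{thm:erdoskac} (applied with the exponent $r_2'$) produces at least half of $s\in\gamma$ with $|\Sel_2(E_s)|\ge 2^{r_2'}$; summing over all $\gamma$, at least half of \emph{all} twists satisfy this bound. Intersecting with the rank bound from the previous step, and excluding the single square class at which $E_s$ gains full rational $2$-torsion (which has density zero among twists ordered by height), a proportion at least $1/2-\epsilon$ of twists $E_s$ simultaneously satisfy $|\Sel_2(E_s)|\ge 2^{r_2'}$, $\rk E_s(\Q)\le R$, and $E_s(\Q)[2]\simeq\Z/2\Z$. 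For any such $E_s$ we have $|E_s(\Q)/2E_s(\Q)|\le 2^{R+1}$, so the exact sequence
\[
0\to E_s(\Q)/2E_s(\Q)\to \Sel_2(E_s)\to \Sha(E_s)[2]\to 0
\]
forces $|\Sha(E_s)[2]|\ge 2^{r_2'-R-1}=4^{r_2}$, giving the corollary.

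The main point requiring care is that the rank bound be uniform over \emph{all} twists rather than conditional on a fixed local class: this rests entirely on the finiteness of the image of $s\mapsto c(\psi_s)$, which in turn follows from Corollary \ref{cor:loc-const}. Without the rational $3$-isogeny there is no comparable handle on the average rank of twists, which is precisely why the hypothesis of a degree-$6$ isogeny (rather than merely a degree-$2$ isogeny) is essential.
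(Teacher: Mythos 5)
Your argument follows the same route as the paper: bound the rank via the degree-$3$ isogeny and Theorem \ref{thm:selmer-average}, produce large $2$-Selmer groups via Theorem \ref{thm:erdoskac}, and combine through the exact sequence $0\to E_s(\Q)/2E_s(\Q)\to\Sel_2(E_s)\to\Sha(E_s)[2]\to 0$. Your reorganization (bounding the global average rank via the finitely many values of $c(\psi_s)$ and then applying Markov, rather than working within each $T_m(\phi)$ separately and summing) is a cosmetic difference, and your explicit bookkeeping with $R$ and $r_2'$ is cleaner than the paper's somewhat terse invocation of Proposition \ref{prop:sha-elements}, which is not literally applicable to a degree-$2$ isogeny. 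That part is fine.

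There is, however, one genuine problem: the opening reduction ``I may assume $E(\Q)[2]\simeq\Z/2\Z$, since the full rational $2$-torsion case is already covered by \cite{Smith,FengXiong}.'' This does not work. What Smith (and Feng--Xiong) give in the full $2$-torsion case is that $\Sha(E_s)[2]$ is \emph{nontrivial} for a positive proportion of $s$, and Smith moreover establishes the Delaunay-type distribution for $\Sha(E_s)[2^\infty]$. Under that distribution, the proportion of twists with $|\Sha(E_s)[2]|\geq 4^{r_2}$ \emph{decays} as $r_2\to\infty$; it is already below $1/2$ at $r_2=1$. So those results do not yield the stated conclusion of a proportion at least $1/2-\epsilon$, and cannot, because the conclusion is simply false for such $E$. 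The right observation is that the corollary — like the paper's own proof, which invokes Theorem \ref{thm:erdoskac} and Lemma \ref{lem:notsquare}, both of which require $E(\Q)[2]\simeq\Z/2\Z$ — tacitly assumes $E$ does not have full rational $2$-torsion (e.g.\ it excludes curves with torsion $\Z/2\times\Z/6$). You should state that hypothesis rather than claim the remaining case is ``covered.''

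A minor point, harmless but worth noting: quadratic twisting does not change the $\Gal(\overline{\Q}/\Q)$-module $E[2]$ (since $-1\equiv 1$ in $\F_2$), so $E_s(\Q)[2]\simeq E(\Q)[2]$ for \emph{every} $s$. In particular there is no ``single square class at which $E_s$ gains full rational $2$-torsion'' to exclude, and the dichotomy between the two torsion cases is a property of $E$ itself, not of the twist. This also confirms that no reduction between the two cases is available.
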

\begin{proof}
Since $E$ has a degree $6$ isogeny, it also possesses a degree $3$ isogeny $\phi$.  Let $m$ be any integer for which $T_m(\phi)$ is non-empty.  By Theorem \ref{thm:selmer-average}, it follows that as $r_3 \to \infty$, a proportion $1-O(1/r_3)$ of twists by $s \in T_m(\phi)$ have $|\mathrm{Sel}_3(E_s)| \leq 3^{r_3}$.  Combining this with Theorem \ref{thm:erdoskac} and Proposition \ref{prop:sha-elements}, the claimed result follows for the relative proportion of such $s \in T_m(\phi)$.  Adding these proportions across those $m$ for which $T_m(\phi)$ is non-empty, we obtain the corollary.
\end{proof}



\section{Exploiting Modular Curves}\label{sec:modular-curves}

In this section, we prove Theorem \ref{thm:modular-curve} and Corollaries \ref{cor:p-sha}-\ref{cor:15-21}
We begin with Theorem \ref{thm:modular-curve}.   

\begin{proof}[Proof of Theorem \ref{thm:modular-curve}]
We begin by recalling the setup.  We are given a non-cuspidal point $x\in X_0(3p)(F)$.  We have labeled the cusps $\mathfrak{c}_i$, $i \in \{1,3,p,3p\}$, of $X_0(3p)$ according to their ramification degree, and we have set $\omega_i$ for the number of primes at which $x$ and $\mathfrak{c}_i$ have the same reduction.  
Now, let $E/F$ be any elliptic curve corresponding to the point $x$ on $X_0(3p)(F)$.  Then $E$ has an $F$-rational $3$-isogeny $\phi\colon E \to E^\prime$ and an $F$-rational $p$-isogeny $\psi\colon E \to E^{\prime\prime}$.  We will ultimately apply Proposition \ref{prop:sha-elements} to these two isogenies, so we begin by analyzing their Selmer ratios.

Let $\p \nmid 3p$ be a prime for which $\bar{x} = \bar{\mathfrak{c}_i}$ for some $i$.  If $r \geq 1$ is such that $x$ and $\mathfrak{c}_i$ have the same reduction $\pmod{\p^r}$ but not $\pmod{\p^{r+1}}$, then $v_\p(j(E)) = -ri$.  Moreover, by considering the action of the Fricke involutions $W_3$ and $W_p$ on $X_0(3p)$, we find that
\[
(-v_\p(j(E^\prime)), -v_\p(j(E^{\prime\prime}))) = \left\{\begin{array}{ll} (3r,pr) & \text{if } i=1, \\ (r,3pr) & \text{if } i=3, \\ (3pr,r) & \text{if } i=p, \text{ and}\\ (pr,3r) & \text{if } i=3p. \end{array}\right.
\]
Taking $E_s$ to be a twist such that $E_s$ has split multiplicative reduction at $\p$, we get  $c_\p(\phi_s) = 3$ and $c_\p(\psi_s) = p$ in the case $i = 1$ and $c_\p(\phi_s) = 1/3$ and $c_\p(\psi_s) = p$ in the case $i = 3$ by Lemma \ref{lem:split-mult}.  Thus,
\[
\prod_{\p: \bar{x} = \bar{\mathfrak{c}}_1 \text{ or } \bar{\mathfrak{c}}_3} c_\p(\phi_s) = 3^{\omega_1 - \omega_3} \text{  and  } \prod_{\p: \bar{x} = \bar{\mathfrak{c}}_1 \text{ or } \bar{\mathfrak{c}}_3} c_\p(\psi_s) = p^{\omega_1 + \omega_3}.
\]
At all other primes $\p \nmid 3p$, we may choose $s$ so that $c_\p(\phi_s) = c_\p(\psi_s)=1$.  At primes $\p\mid 3$, we choose $s$ so that $c_\p(\psi_s) = 1$.  At worst, for this $s$ we have $$3^{-d} \leq \prod_{\p\mid 3} c_\p(\phi_s) \leq 3^{2d}.$$  At $\p\mid p$, we may at the very least choose $s$ so that $c_\p(\psi_s) \geq 1$ while maintaining $c_\p(\phi_s) =1$.  Compiling these contributions, we have
\[
3^{\omega_1-\omega_3-d} \leq \prod_{\p < \infty} c_\p(\phi_s) \leq 3^{\omega_1 - \omega_3 + 2d} \quad \text{and} \quad \prod_{\p < \infty} c_\p(\psi_s) \geq p^{\omega_1+\omega_3}.
\]
Thus, there are two extremes to be concerned with: either $v_3(c(\phi_s))$ could be large and positive, or $v_3(c(\phi_s))$ could be large and negative.  Considering the infinite places, in the first case, there is a choice of $s$ for which $v_p(c(\psi_s)) - v_3(c(\phi_s)) \geq 2\omega_3 - 2d$, while in the latter, there is a choice for which $v_p(c(\psi_s))-|v_3(c(\phi_s))| \geq 2\omega_1 - 2d$.  The result now follows from Proposition \ref{prop:sha-elements}.
\end{proof}

We now proceed to the proofs of the associated corollaries to Theorem \ref{thm:modular-curve}.

\begin{proof}[Proof of Corollary \ref{cor:p-sha}]
Let $x$ be the point on the modular curve $X(1)$ corresponding to $E$ and let $F$ be the field $\mathbb{Q}(x^\prime)$ where $x^\prime$ is a preimage of $x$ under the degree $4p+4$ covering $X_0(3p) \to X(1)$.  At each prime $\ell$ at which $E$ has multiplicative reduction, the point $x$ reduces to the (unique) cusp of $X(1)$, so at any prime $\mathfrak{l}$ of $F$ lying over $\ell$, the point $x^\prime$ must reduce to one of the four cusps of $X_0(3p)$.  In fact, by our assumption on the Galois action on $E[3p]$, we must have that $\ell \mathcal{O}_F = \mathfrak{l}_1\mathfrak{l}_3\mathfrak{l}_p\mathfrak{l}_{3p}$, where the reduction of $x^\prime$ on $X_0(3p) \pmod{\mathfrak{l}_i}$ is the same as that of $\mathfrak{c}_i$.  As we have assumed that there are at least $4p+4+r$ such primes $\ell$, it follows that each $\omega_i \geq 4p+4+r$.  The result now follows from Theorem \ref{thm:modular-curve}.
\end{proof}

\begin{proof}[Proof of Corollary \ref{cor:5-sha}]
Let $E\colon y^2 + xy + y = x^3 + x^2 - 13x - 219$ be the elliptic curve with Cremona label 50b3.  We wish to show that $|\Sha(E_s)[5]|\geq 25$ for an explicit set of $s$.  $E$ has both a 3-isogeny $\phi\colon E \to E^\prime$ and a 5-isogeny $\psi\colon E \to E^{\prime\prime}$, where $E^\prime$ and $E^{\prime\prime}$ have Cremona labels 50b4 and 50b1, respectively.  We claim that if $s \equiv 1 \pmod{8}$ is a positive squarefree integer coprime to $5$, then the two global Selmer ratios are given by $c(\phi_s) = 1$ and $c(\psi_s) = 25$.  In particular, for all but finitely many such $s$, we will have that $|\mathrm{Sel}_5(E_s)| \geq 25$, and for $50\%$ of such $s$ the rank of $E_s(\mathbb{Q})$ will be 0 by Theorem \ref{thm:selmer-average}.  Thus, the theorem will follow from the claim about the global Selmer ratios of $\phi_s$ and $\psi_s$.

The curve $E$ has split multiplicative reduction of Kodaira type $\In{1}$ at $p=2$ and additive reduction of Kodaira type $\IIS$ at $p=5$.  By Proposition \ref{prop:local-selmer-comp}, it follows that $c_2(\phi_s) = 3$ for all squarefree $s \equiv 1 \pmod{8}$ and $c_5(\phi_s)=1$ for all $s$ that are coprime to $5$.  In addition, a computation in \verb^Magma^ shows that 
$c_3(\phi_s) =1$ for all $s$ and $c_\infty(\phi_s) = 1/3$ for all positive $s$.  We thus find that for $s$ as claimed, we have $c(\phi_s) = 1$.

We now consider $c(\psi_s)$.  From \cite[Table 1]{DD}, we see that $c_2(\psi_s) = 5$ for all squarefree $s \equiv 1 \pmod{8}$.  The field $\mathbb{Q}(\ker \psi) = \mathbb{Q}\Big(\sqrt{5\sqrt{5}-50}\Big)$ is totally complex, so it follows that $c_\infty(\psi_s) = 1$ for all positive $s$.  Lastly, $c_5(\psi_s) = c_5(E^{\prime\prime}_s)/c_5(E_s) \cdot \alpha_{\psi_s,\mathbb{Q}_5}$.  Since $E$ has Kodaira type $\IIS$ at $p=5$, it follows that $c_5(E^{\prime\prime}_s)/c_5(E_s) = 1$ for all $s$.  Observe that $\Delta_E = -2\cdot 5^{10}$ while $\Delta_{E^{\prime\prime}} = - 2^5 \cdot 5^2$, so that by \cite[Theorem 1]{kg}, $\alpha_{\psi_s,\mathbb{Q}_5} = 5$ for all $s$ coprime to $5$.  Pulling this together, we find that $c(\psi_s) = 25$ for all positive squarefree $s \equiv 1 \pmod{8}$ that are coprime to $5$, and the theorem follows.  The claim about the elliptic curve 50b4 follows along the same lines.
\end{proof}

We now turn to the proof of Corollary \ref{cor:15-21} concerning fields over which the modular curves $X_0(15)$ and $X_0(21)$ have infinitely many points.  Recall that $X_0(15)$ and $X_0(21)$ both have genus one, so that they may be given the structure of an elliptic curve.  The following lemma will be used to find rational points which reduce to specified cusps modulo many primes.

\begin{lemma}\label{lem:ec-divis}
Let $E/F$ be an elliptic curve of positive rank and let $T \in E(F)$ be a non-trivial torsion point.  Fix an integral model for $E$.  Given any $\omega_1$ and $\omega_3$, there exist distinct primes $\p_1,\dots,\p_{\omega_1}$ and $\q_1,\dots,\q_{\omega_3}$ for which there are infinitely many points $P \in E(F)$ for which $P \equiv \mathcal{O} \pmod{\p_i}$ for each $i\leq \omega_1$ and $P \equiv \bar{T} \pmod{\q_j}$ for each $j \leq \omega_3$.
\end{lemma}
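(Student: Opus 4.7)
The plan is to combine Chebotarev's density theorem with the Chinese Remainder Theorem. Since $E(F)$ has positive rank, fix a point $Q \in E(F)$ of infinite order, and let $n$ denote the order of $T$.

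The first step is to show that infinitely many primes $\p$ of good reduction for $E$ satisfy $\bar{T} \in \langle \bar{Q}\rangle \subseteq \tilde{E}(\F_\p)$. I would apply Chebotarev's density theorem to the Galois extension $F(E[n], \tfrac{1}{n}Q)/F$ --- obtained by adjoining the $n$-torsion of $E$ together with an $n$-division point of $Q$ --- and choose a Frobenius class that forces $n \mid \mathrm{ord}(\bar{Q})$ and pinpoints $\langle \bar{T}\rangle$ as the unique order-$n$ subgroup of $\langle \bar{Q}\rangle$; this immediately gives $\bar{T} \in \langle \bar{Q}\rangle$ at such primes. The same method produces infinitely many primes at which $\mathrm{ord}(\bar{Q})$ is instead coprime to $n$. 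Routine adjustments handle the cases where $Q$ is $n$-divisible in $E(F)/E(F)_{\mathrm{tors}}$ or $E[n](F)$ strictly contains $\langle T\rangle$.

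The second step is to select from these infinite sets a configuration $\p_1, \ldots, \p_{\omega_1}, \q_1, \ldots, \q_{\omega_3}$ of primes whose associated orders $m_\p := \mathrm{ord}(\bar{Q})$ in $\tilde{E}(\F_\p)$ satisfy the compatibility conditions required to solve the Chinese Remainder Theorem system $k \equiv 0 \pmod{m_{\p_i}}$, $k \equiv c_{\q_j} \pmod{m_{\q_j}}$, where $c_\p$ is the integer defined by $\bar{T} \equiv c_\p \bar{Q} \pmod{\p}$. Silverman's analogue of the Bang--Zsygmondy theorem on primitive prime divisors of elliptic divisibility sequences supplies new primes whose orders contribute fresh prime factors, so that the orders can be made essentially pairwise coprime. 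The main obstacle of the proof is coordinating this primitive divisor argument with the Chebotarev condition from the first step: since $n$ necessarily divides every $m_{\q_j}$, the orders at the $\q_j$ share a common factor of $n$, and a subtler Chebotarev condition must be imposed, on a larger tower recording both the orders and the residues $c_{\q_j} \pmod n$, in order to synchronize these residues across the $\q_j$ so that the CRT system remains consistent.

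Once the primes are in place, CRT produces an integer $k$, and setting $P = kQ$ gives $P \equiv \mathcal{O} \pmod{\p_i}$ and $P \equiv \bar{T} \pmod{\q_j}$. Finally, to upgrade from this single $P$ to an infinite family, observe that the subgroup $H := \bigcap_i \ker(r_{\p_i}) \cap \bigcap_j \ker(r_{\q_j}) \subseteq E(F)$ has finite index and is therefore infinite, since $E(F)$ has positive rank; every element of the coset $P + H$ satisfies the required congruences.
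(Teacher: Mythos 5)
Your approach is genuinely different from the paper's, but it has a gap that you flag without resolving. You want to choose the primes $\p_i, \q_j$ first and then solve a CRT system for a multiplier $k$ so that $P = kQ$ works. But because $n := \mathrm{ord}(T)$ divides each $m_{\q_j}$, the moduli share a common factor, and solvability of the CRT system requires $c_{\q_j} \equiv c_{\q_{j'}} \pmod{\gcd(m_{\q_j}, m_{\q_{j'}})}$ for all pairs $j \ne j'$, along with $c_{\q_j} \equiv 0 \pmod{\gcd(m_{\p_i}, m_{\q_j})}$. You acknowledge this obstacle and say ``a subtler Chebotarev condition must be imposed on a larger tower,'' but that synchronization is precisely the hard part and is not carried out; nor is it clear the primitive-divisor machinery can be coordinated with such a Chebotarev constraint, since it produces primes with a prescribed $\mathrm{ord}(\bar Q)$ but gives no control over their Frobenius in a Kummer tower. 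Even Step 1 as written is incomplete: a prime $\p$ splitting completely in $F(E[n], \tfrac{1}{n}Q)$ makes $\bar Q$ $n$-divisible in the reduction and makes all of $E[n]$ rational over the residue field, but this does not by itself put $\bar T$ in $\langle\bar Q\rangle$ (the reduction could be, say, $(\Z/n)^2$ with $\bar T$ outside $\langle\bar Q\rangle$). Identifying the right Frobenius class and proving it is attained is a nontrivial Kummer-theoretic step of the ``detecting linear dependence by reduction'' type, and you would need to do it.

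The paper's proof sidesteps CRT entirely, which is why no synchronization issue arises. With $P$ of infinite order and $d = \mathrm{ord}(T)$, it picks odd primes $\ell_i \equiv 1 \pmod d$ large enough that $\ell_i P$ and $\ell_i P - T$ each acquire a new prime in their denominators, beyond those dividing the denominators of $P$ and $P - T$; this is possible by the finiteness of $S$-integral points, and those new primes are taken to be $\p_i$ and $\q_i$. Setting $\ell = \prod_i \ell_i$, one checks directly that $m\ell P$ satisfies all the congruences simultaneously for every $m \equiv 1 \pmod d$: at $\p_i$ the factor $\ell_i P$ already reduces to $\mathcal{O}$, while at $\q_i$ one has $\ell_i P \equiv T$ and $m\ell/\ell_i \equiv 1 \pmod d$ keeps the torsion contribution equal to $T$. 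The multiplier is engineered from the outset to respect the congruences rather than reconstructed afterward by CRT, and Siegel's theorem replaces both the Chebotarev and the primitive-divisor inputs.
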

\begin{proof}
Let $P \in E(F)$ be of infinite order.  Let $d$ be the order of $T$.  Let $w = \max(\omega_1,\omega_3)$, and let $\ell_1, \dots, \ell_w \in \mathbb{Z}$ be any odd primes congruent to $1\pmod{d}$ and sufficiently large that both $\ell_i P$ and $\ell_i P - T$ have a denominator divisible by a prime not dividing the denominator of $P$ or $P-T$; as any elliptic curve has only finitely many $S$-integral points (see Corollary IX.3.2.1 in \cite{AEC}, for example), this is always possible.  Let $\p_i$ be a prime for which the denominator of $\ell_i P$ has a non-trivial valuation and let $\q_i$ be such a prime for $\ell_i P - T$.  Set $\ell = \ell_1 \dots \ell_w$.  Then for any integer $n\equiv 1 \pmod{d}$, the point $n\ell P$ satisfies the desired conditions. 
\end{proof}

\begin{proof}[Proof of Corollary \ref{cor:15-21}]
Suppose that $p=5$ or $p=7$.  The embedding $X_0(3p) \to J_0(3p)$ given by $x \mapsto [x] - [\mathfrak{c}_1]$ is an isomorphism, endowing $X_0(3p)$ with the structure of an elliptic curve $E$.  Moreover, by the Manin-Drinfeld theorem, the cusps $\mathfrak{c}_i$ of $X_0(3p)$ are torsion points in the Mordell-Weil group $E(F)$.  For any $r \geq 1$, let $\omega_1 = \omega_3 = r + 2d$.  Applying Lemma \ref{lem:ec-divis} with $T = [\c_3]-[\c_1]$, we find infinitely many points $x \in X_0(3p)(F)$ for which Theorem \ref{thm:modular-curve} produces a curve $E/F$ with $|\Sha(E_s/F)[p]| \geq p^{2r}$ for a positive proportion of twists.  This is Corollary \ref{cor:15-21}.
\end{proof}

\begin{remark}\label{rem:100}{\em
The proof of Lemma \ref{lem:ec-divis} could likely be adapted to show that when the points $P$ of $E(F)$ are ordered by height, almost all will be such that the conclusion of the lemma holds for fixed values of $\omega_1$ and $\omega_3$.  For example, most integers $n$ have at least $\frac{1}{2}\log\log n$ prime factors, so that there are at least $\frac{1}{2} \log\log n$ primes contributing to $\omega_1$ for most points $nP$.  Similarly, most $n$ also have at least $\frac{1}{2\phi(d)} \log\log n$ prime factors congruent to $1 \pmod{d}$.  Since $\ell nP - T = \ell\cdot(nP - T)$ for such a prime $\ell$, it follows that most points $nP$ will also have $\geq \frac{1}{2\phi(d)} \log\log n$ primes contributing to $\omega_3$.  This argument essentially suffices in the case that $E(F)$ has rank $1$, and we expect an analogous argument can be made in higher rank.}
\end{remark}

\section{Exploiting primes of (potential) good reduction}\label{arb p}

Let $p$ be an odd prime.  If $E/\Q$ is an elliptic curve with irreducible $\Gal(\bar \Q/\Q)$-representation $E[3p]$, then our techniques do not say anything about the average size of $\Sha(E_s)[p]$ as $s$ varies over $\Q^\times/\Q^{\times2}$, since $E$ admits neither a 3-isogeny nor a $p$-isogeny. 

This is of course no longer the case if we base change $E$ to a sufficiently large extension, a fact that we took advantage of in the proof of Corollary \ref{cor:p-sha}. However, Corollary \ref{cor:p-sha} requires that $E$ have a large number of places of multiplicative reduction, imposing a significant restriction on $E$.

In the proof of Theorem \ref{large primes} that follows, we show how similar results can be obtained by exploiting primes dividing the degrees of the two isogenies. This allows us to extend our results to many additional curves, including those with everywhere potentially good reduction.

\begin{proof}[Proof of Theorem $\ref{large primes}$]
We recall the hypotheses of the theorem, that $E/\mathbb{Q}$ is an elliptic curve with potentially good and ordinary reduction at both $3$ and $p$ and that $\mathrm{Gal}(\bar{\mathbb{Q}}/\mathbb{Q})$ acts transitively on the set of $\mathbb{F}_\ell$ lines in $E[\ell]$ for both $\ell =3$ and $\ell = p$. It follows that $\Gal(K/\Q)$ acts transitively on the $\mathbb{F}_\ell$ lines in $E_s[\ell]$ as well.

We now replace $E$ by its base change to $K = \mathbb{Q}(E[3p])$, at which point it has everywhere semi-stable reduction. Let $\psi \colon E \to E'$ be any of the $p + 1$ isogenies of degree $p$ emanating from $E$ defined over $K$, and $\phi \colon E \to E_0$ be any of the four 3-isogenies out of $E$, all of which are defined over $K$ as well. We restrict our focus to the subset $S$ of elements $s \in K^*/K^{*2}$ such that \[c_\ell(\phi_s) = 1 = c_\ell(\psi_s)\] for all primes $\ell$ of $K$ at which $E$ has multiplicative reduction. By Lemmas \ref{lem:split-mult} and \ref{lem:twistofgoodred}, this condition holds whenever $E_s$ does not have split multiplicative reduction at $\ell$, so $S$ has positive density. We now claim that the average rank of $E_s$ for $s \in S$ is at most $\frac{d}{2} + 3^{-6}$, where $d = \frac{1}{2}[K:\QQ]$.

Since $c_\ell(\phi_s) = 1$ at all places where $E$ has bad reduction, Lemma \ref{cor:loc-const} tells us that $c(\phi_s) = c_\infty(\phi_s)c_3(\phi_s)$, where $c_p(\phi) = \prod_{\p \mid p} c_\p(\phi)$.  As $K$ is totally complex, we have $c_\infty(\phi_s) = 3^{-d}$.  On the other hand, if $\p \mid 3$, then $c_\p(\phi_s)$ is either $3^{[K_\p \colon \Q_3]}$ or 1, depending on whether $\ker\phi_s$ reduces mod $\p$ to the kernel of Frobenius or not, or in other words, whether $\ker \phi_s$ is the canonical subgroup of $E[3]$ over $K_\p$.  The latter condition is equivalent to saying that $\ker\phi_s$ lies in the formal group $\hat E_s$ over $K_\p$.

The primes of $K$ above $3$ are permuted transitively by $\Gal(K/\Q)$ and this Galois action is compatible the $\Gal(K/\Q)$-action on the canonical subgroups: if $\sigma \in \Gal(K/\Q)$ then the canonical subgroup of $E$ over $K_{\p^\sigma}$ is $(\ker \phi)^\sigma$.  It follows that $\ker \phi_s$ is the kernel of Frobenius for exactly 1/4 of all primes $\p$ of $K$ above 3.  Therefore $c_3(\phi_s) = 3^{d/2} \cdot 1^{3d/2} = 3^{d/2}$, which gives $c(\phi_s) = 3^{-d/2}$.  Hence, by Theorem \ref{thm:selmer-average}, the average rank of $E_s$ for $s \in S$ is at most $\frac{d}{2} + \frac{1}{3^{d/2}}.$  Since $K$ contains $\zeta_3$ and $\zeta_p$, we have $2(p-1) \mid 2d$.  Thus, $d/2$ is an integer greater than 1, and the average rank of $E_s$, for $s \in S$, is at most $\frac{d}{2} + \frac19$. 

Turning our attention to $\psi$, we observe that the same reasoning yields $c(\psi_s) = p^{2d/(p+1) - d}$.  It follows that $\Sel_p(E'_s)$ has $\F_p$-dimension at least $d - \frac{2d}{p+1}$ for all $s \in S$.  However, a positive proportion of twists $E'_s$ by $s \in S$ have rank at most $\frac{d}{2}$.  For these $s$, we conclude that $\dim_{\F_p} \Sha(E'_s)[p]$ is at least
\begin{equation}
\label{eq:dimSha}
d - \frac{2d}{p+1} - \frac{d}{2} = \frac{d}{2}\left(1 - \frac{4}{p+1}\right),
\end{equation}
which tends to $\frac{d}{2}$ as $p \to \infty$ and is positive for all $p \ge 5$.
\end{proof}


\section{Tate-Shafarevich groups of CM elliptic curves}\label{cm}

In this section we prove lower bounds for the average order of $\Sha(E_s)[n]$, for a large class of elliptic curves $E$ with complex multiplication, which is the content of Theorem \ref{CMsha}.  We will need two preliminary results.  

\begin{lemma}\label{cmisogeny}
Let $E$ be an elliptic curve  over a number field $F$, and suppose $\End_F(E)$ is the quadratic ring of discriminant $Df^2$, with $D$ a fundamental discriminant.  Then there exists a cyclic $f$-isogeny $\phi \colon E \to E_0$ such that $\End_F(E_0)$ has discriminant $D$.  
\end{lemma}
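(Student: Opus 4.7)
The plan is to construct the isogeny $\phi$ first over $\mathbb{C}$ via lattices, and then descend it to $F$ using a rigidity argument for $\mathcal{O}_K$-module structures. A preliminary observation is that the hypothesis $\End_F(E) = \mathcal{O}$, with $\mathcal{O}$ strictly larger than $\mathbb{Z}$, already forces $K \subseteq F$: any non-integer $\alpha \in \End_F(E)$ acts on the one-dimensional tangent space $T_0 E$ (an $F$-vector space) by a scalar $\lambda \in F$, while under the analytic uniformization $\lambda$ is an element of $K \setminus \mathbb{Q}$. Hence $F \supseteq \mathbb{Q}(\lambda) = K$.

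Next, I would fix a complex embedding and write $E(\mathbb{C}) = \mathbb{C}/\Lambda$ for a proper $\mathcal{O}$-submodule $\Lambda \subset K$. Setting $\Lambda_0 := \mathcal{O}_K \cdot \Lambda$, the analytic quotient $E_0 := \mathbb{C}/\Lambda_0$ is an elliptic curve with $\End(E_0) \cong \mathcal{O}_K$, an order of discriminant $D$, and the inclusion $\Lambda \hookrightarrow \Lambda_0$ induces an isogeny $\phi\colon E \to E_0$ with kernel $\Lambda_0/\Lambda$. Since $\Lambda$ is invertible (hence flat) over $\mathcal{O}$, tensoring $0 \to \mathcal{O} \to \mathcal{O}_K \to \mathcal{O}_K/\mathcal{O} \to 0$ with $\Lambda$ identifies $\ker\phi$ with $\mathcal{O}_K/\mathcal{O}$. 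The relation $\mathcal{O} = \mathbb{Z} + f\mathcal{O}_K$ then shows $\mathcal{O}_K/\mathcal{O}$ is cyclic of order $f$, so $\phi$ is a cyclic $f$-isogeny.

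The main step is descending $\phi$ to $F$. For this I would show that $H := \ker\phi$ is the \emph{unique} cyclic subgroup of $E(\overline{F})$ of order $f$ whose quotient has CM by the maximal order $\mathcal{O}_K$. On the lattice side, any such subgroup corresponds to an $\mathcal{O}_K$-stable overlattice $\Lambda \subsetneq \Lambda' \subset K\Lambda$ with $[\Lambda' : \Lambda] = f$; but any $\mathcal{O}_K$-stable overlattice of $\Lambda$ automatically contains $\mathcal{O}_K\Lambda = \Lambda_0$, and an index count forces $\Lambda' = \Lambda_0$. Since the defining property of $H$ is $\Gal(\overline{F}/F)$-invariant, Galois fixes the singleton $\{H\}$, so $H$ descends to a finite subgroup scheme of $E/F$ and $\phi\colon E \to E/H$ descends accordingly. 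Finally, because $F \supseteq K$, the tangent-space argument of the first paragraph applied to $E_0$ gives $\End_F(E_0) = \End_{\overline{F}}(E_0) \cong \mathcal{O}_K$, as required.

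The main obstacle is the uniqueness/descent step; once one sees that the $\mathcal{O}_K$-module structure pins down the overlattice on the nose, Galois-stability and descent are immediate.
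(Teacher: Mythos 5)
Your proof is correct, but it takes a genuinely different route from the paper's. The paper constructs the kernel as $E[\mathfrak{a}]$ for the $\mathcal{O}$-ideal $\mathfrak{a} = f\mathcal{O}_K$ (the conductor), where $\mathcal{O} = \End_F(E)$. Because $\mathfrak{a}$ is an ideal of the $F$-rational endomorphism ring, the subgroup scheme $E[\mathfrak{a}] = \bigcap_{\alpha \in \mathfrak{a}} \ker(\alpha)$ is \emph{automatically} defined over $F$, so no descent is needed; and $\ker\phi \simeq \mathcal{O}/f\mathcal{O}_K = (\mathbb{Z}+f\mathcal{O}_K)/f\mathcal{O}_K \simeq \mathbb{Z}/f\mathbb{Z}$ gives cyclicity immediately. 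The paper then simply cites Kani's theorem for $\End_F(E_0) \simeq \mathcal{O}_K$. You instead build the same subgroup analytically as $\mathcal{O}_K\Lambda/\Lambda$ (which one checks equals $\tfrac{1}{f}(\Lambda : \mathcal{O}_K)/\Lambda = E[f\mathcal{O}_K]$), and pay for it with a uniqueness-plus-Galois-descent argument. That argument is sound — any $\mathcal{O}_K$-stable overlattice of index $f$ must be $\mathcal{O}_K\Lambda$, the defining property is Galois-stable, and the tangent-space argument does give $\End_F(E_0) = \End_{\overline F}(E_0)$ once $K\subseteq F$ — so in effect you have reproved the input to Kani's theorem rather than citing it. One small gap worth flagging: you implicitly use that $\Lambda$ is a \emph{proper} $\mathcal{O}$-module, i.e.\ $\End_{\mathbb{C}}(E) = \mathcal{O}$ and not a larger order; this follows from $\End_F(E)=\mathcal{O}$ and $K\subseteq F$ by the same tangent-space equivariance you invoke later, but you should say so, since otherwise $\Lambda$ could a priori have a strictly larger multiplier ring and the index count would change. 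Overall: the paper's proof is shorter given its citation; yours is more self-contained and makes the descent mechanism explicit.
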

\begin{proof}
Let $K = \Q(\sqrt{D})$, and let $\O_K$ be its ring of integers.  We identify $\End_F(E)$ with the order $\O$ of index $f$ inside $\O_K$.  Then $f\O_K \subset \O$ is an $\O$-ideal and the desired isogeny $\phi$ is the isogeny $\phi \colon E \to E/E[f\O_K]$.  Indeed, $\ker \phi \simeq E[f\O_K] \simeq \O/f\O_K \simeq \Z/f\Z$, so $\phi$ is a cyclic $f$-isogeny.  Moreover, if we define $E_0 = E/E[f\O_K]$, then $\End_F(E_0) \simeq \O_K$, by \cite[Thm.\ 20]{Kani}.  
\end{proof}

\begin{proposition}\label{lem: volcano}
Suppose $\phi \colon E \to E_0$ is as in Lemma $\ref{cmisogeny}$, and $f = p^n$ for some prime $p$.  If $\p$ is a prime of $F$ above $p$ of $($potentially$)$ ordinary reduction for $E$, then $c_\p(\phi) = p^{n[F_\p \colon \Q_p]}$.    
\end{proposition}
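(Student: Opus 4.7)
Plan. The strategy is to identify $\ker \phi$ with the $p^n$-torsion of the formal group of $E$ at $\mathfrak{p}$, and then compute $c_\mathfrak{p}(\phi)$ directly using the connected--\'etale filtration. Since the full CM action is already defined over $F$, it suffices to work over $F_\mathfrak{p}$ and, if needed, to base change to a finite extension $L/F_\mathfrak{p}$ over which $E$ acquires good ordinary reduction; one then descends to $F_\mathfrak{p}$ at the end. The hypothesis of (potential) ordinary reduction is equivalent to $p$ splitting in $K$, say $p\mathcal{O}_K = \mathfrak{P}\overline{\mathfrak{P}}$, so the formal group $\mathcal{F}$ of $E$ at $\mathfrak{p}$ has height one.

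The central identification to establish is $\ker\phi = \mathcal{F}[p^n]$ as finite flat subgroup schemes of $E$. I will use the free rank-one $\mathcal{O} \otimes \mathbb{Z}_p$-module structure on $T_p E$ together with the description $\mathcal{O}\otimes\mathbb{Z}_p = \{(a,b) \in \mathbb{Z}_p^2 : a \equiv b \pmod{p^n}\}$ inside $\mathcal{O}_K \otimes \mathbb{Z}_p = \mathbb{Z}_p \times \mathbb{Z}_p$. Setting $\pi = (p^n, 0)$, so that $\{1, \pi\}$ is a $\mathbb{Z}_p$-basis with $\pi^2 = p^n \pi$, the $\mathcal{O}$-ideal $p^n \mathcal{O}_K \otimes \mathbb{Z}_p$ is generated by $p^n$ and $\pi$, and its annihilator in $E[p^n] = (\mathcal{O}\otimes\mathbb{Z}_p)/p^n(\mathcal{O}\otimes\mathbb{Z}_p)$ is the cyclic $\mathbb{Z}/p^n$-submodule generated by the reduction of $\pi$. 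On the other hand, the connected--\'etale decomposition of $T_pE_0$ refines the $\mathbb{Z}_p \times \mathbb{Z}_p$ splitting of $\mathcal{O}_K \otimes \mathbb{Z}_p$; pulling back to $T_pE$ and reducing modulo $p^n$ identifies $\mathcal{F}[p^n]$ with the same cyclic subgroup (the same conclusion holds regardless of which factor one labels as connected). Hence $\ker\phi = \mathcal{F}[p^n]$.

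With this in hand, the restriction $\phi|_{\mathcal{F}}\colon \mathcal{F} \to \mathcal{F}_0$ is an isogeny of height-one formal groups with full kernel $\mathcal{F}[p^n]$, hence is isomorphic to multiplication by $[p^n]$ on $\mathcal{F}$. I will then apply the snake lemma to the connected--\'etale exact sequence $0 \to \mathcal{F}(\mathfrak{m}_L) \to E(L) \to \tilde E(k_L) \to 0$ and its $E_0$-analogue. The \'etale contribution to $c_L(\phi)$ is trivial because $\tilde E$ and $\tilde E_0$ are isogenous elliptic curves over the residue field and so have the same point count. The formal contribution is
\[
\frac{|\mathcal{F}_0(\mathfrak{m}_L)/[p^n]\mathcal{F}(\mathfrak{m}_L)|}{|\mathcal{F}(\mathfrak{m}_L)[p^n]|} = p^{n[L:\mathbb{Q}_p]},
\]
using that $\mathcal{F}(\mathfrak{m}_L)$ is a finitely generated $\mathbb{Z}_p$-module of $\mathbb{Z}_p$-rank $[L : \mathbb{Q}_p]$.

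To descend from $L$ back to $F_\mathfrak{p}$, I will factor $\phi$ into $p$-isogenies and apply Proposition~\ref{prop:local-selmer-comp}: the Jacobian factor satisfies $\alpha_{\phi, \mathfrak{p}} = p^{n[F_\mathfrak{p}:\mathbb{Q}_p]}$ because $\phi|_\mathcal{F} \simeq [p^n]$ already at the level of formal groups defined intrinsically over $\mathcal{O}_{F_\mathfrak{p}}$. The main obstacle I anticipate is verifying that the Tamagawa ratio $c_\mathfrak{p}(E_0)/c_\mathfrak{p}(E)$ equals one in the potentially (but not actually) good reduction case; I would handle this by observing that $E$ and $E_0$ acquire good reduction over the same minimal extension, so the CM isogeny identifies their Kodaira types and component groups as inertia modules.
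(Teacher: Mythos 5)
Your central step---identifying $\ker\phi$ with the canonical subgroup of $E[p^n]$---is correct, but you prove it by a genuinely different argument from the paper's. The paper works dually: it considers the complementary isogeny $\psi\colon E_0 \to E$ with $\psi\circ\phi = [p^n]$ and shows $\ker\psi$ misses the canonical subgroup $C\subset E_0[p]$ by a discriminant count (if not, $\psi$ would factor through a degree-$p^{n-1}$ isogeny $E_0/C \to E$, contradicting $\Disc(\End(E)) = p^n\Disc(\End(E_0/C))$ and the fact that a $p$-isogeny changes the endomorphism discriminant by at most a factor of $p$). You instead compute directly inside $T_pE \subset T_pE_0 \simeq \O_K\otimes\Z_p \simeq \Z_p\times\Z_p$, using $\O\otimes\Z_p = \{(a,b): a\equiv b \pmod{p^n}\}$ and $\pi=(p^n,0)$, and correctly find that both $\ker\phi = E[p^n\O_K]$ and the image of $T_pE \cap T^0_{E_0}$ in $E[p^n]$ are the cyclic subgroup generated by $\pi$ (with the symmetry in the two factors you note). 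That calculation is right, and your snake-lemma computation of the local ratio over an extension $L$ where $E$ has good ordinary reduction is also correct.

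Where your argument has a genuine gap is in the descent back to $F_\p$ in the potentially-but-not-actually-good case. You assert that $\alpha_{\phi,\p} = p^{n[F_\p\colon\Q_p]}$ because ``$\phi|_{\mathcal{F}} \simeq [p^n]$ already at the level of formal groups defined intrinsically over $\O_{F_\p}$.'' This is false as stated when $E$ has additive reduction at $\p$: the formal completion of the Néron model over $\O_{F_\p}$ is then the additive formal group $\hat{\G}_a$, which has no $p$-torsion in characteristic zero, so the identification $\ker\hat\phi = \hat E[p^n]$ that you use over $L$ has no analogue over $\O_{F_\p}$. To compute $\alpha_{\phi,\p}$ directly one must track how the Néron differentials of both $E$ and $E_0$ change when passing from $F_\p$ to $L$ and show the corrections cancel; this needs something like $v_\p(\Delta_{\min}(E)) = v_\p(\Delta_{\min}(E_0))$. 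Similarly, ``the CM isogeny identifies their Kodaira types and component groups as inertia modules'' is plausible but not automatic when $p=3$ (allowed by the Proposition), where the component group can be $\Z/3\Z$ and a $p$-power isogeny need not induce an isomorphism on it. The paper sidesteps all of this by invoking \cite{DD}, Table 1, which is a result giving $c_\p(\phi)$ over $F_\p$ directly in terms of the kernel's behavior after passage to a good-reduction extension; if you want a self-contained descent, that is the missing input you would need to reprove.
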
      
\begin{proof}
Since $\p$ is a prime of potentially ordinary reduction, by \cite[Table 1]{DD}, to compute $c_\p(\phi)$, we may replace $F_\p$ by a finite extension over which $E$ has good ordinary reduction.  So we may assume that this is the case already for $E$ over $F$.

We let $\bar \phi \colon \bar E \to \bar E_0$ be the induced isogeny of elliptic curves over the residue field $\F_\p$.  The key point is that $\ker \bar\phi$ is connected, i.e.\ $\bar\phi$ is (up to isomorphism) the $n$th power of the absolute Frobenius isogeny of $E$ over $\F_\p$.  In other words, $\ker \phi$ is the canonical subgroup of $E[p^n]$.  To see this, note that $\psi \circ \phi = [p^n]$, for some cyclic $p^n$-isogeny $\psi \colon E_0 \to E$.  The canonical subgroup $C$ in $E_0[p]$ is of the form $E_0[\a]$ for some ideal $\a \subset \O_K$, so $\End(E_0/C) \simeq \O_K$.  It follows that $\ker \psi$ intersects trivially with the canonical subgroup of $E_0[p]$.  Indeed, if the intersection were non-trivial, then $\psi \colon E_0 \to E$ would factor through an isogeny of degree $p^{n-1} \colon E_0/C \to E$.  This is impossible, since $\Disc(\End(E)) = p^n\Disc(\End(E_0/C))$ and the discriminant changes by at most a factor of $p$ under a $p$-isogeny (see e.g.\ \cite[Cor.\ 4.3]{RosenShnidman}).

We conclude that $\ker\bar\psi$ is \'etale, and hence $\ker \bar\phi$ is connected.  In other words, $\ker\phi$ reduces to the formal group of $E$.  Using \cite[Table 1]{DD}, we conclude that $c_\p(\phi) = p^{n[F_\p \colon \Q_p]}$.
\end{proof}

\begin{proof}[Proof of Theorem $\ref{CMsha}$]
On the one hand, by \cite[Thm.\ 2.7]{BKLOS}, the average rank of $E_s$ is at most 1.  Since the rank of $E_s(F)$ is even, this means that at least $50\%$ of these twists have rank 0.  On the other hand, we will show that for all but finitely many $s \in F^\times/F^{\times 2}$, the $f$-Selmer group $\Sel_f(E_s)$ has size at least $f^d$.  For those twists with rank 0, this implies that    $|\Sha(E_s)[f]| \geq f^d$, proving the theorem. 

To give a lower bound for $|\Sel_f(E_s)|$, we choose $E_0$ over $F$ with $\End_F(E_0) \simeq \O_K$ and such that there is a cyclic isogeny $\phi \colon E \to E_0$ of degree $f$,  as in Lemma \ref{cmisogeny}.  We will show that $c(\phi_s) = f^d$, for all $s$.  From Proposition \ref{prop:selmer-ratio}, it will then follow that $\Sel_\phi(E_s)$ has size at least $f^d$, and hence $\Sel_f(E_s)$ has size at least $f^d$ for all but finitely many $s$, which will complete the proof.   

To compute $c(\phi_s)$, it suffices to consider the case $s = 1$.  We need to compute $c_\p(\phi)$ for all primes $\p$ of $F$.  If $\p$ is a finite prime not dividing $f$, then Lemma \ref{lem:twistofgoodred} implies that $c_\p(\phi) = 1$ since $E_0$ has a quadratic twist of good reduction (see \cite[Proof of Thm.\ 11.2]{BKLOS}).
Next we consider primes $p$ dividing $f$, and primes $\p$ of $F$ above $p$.  Then $E$ has potentially ordinary reduction at $\p$, since $p$ splits in $K$.  We can factor $\phi \colon E\to E_0$ into a $\phi_2 \circ \phi_1$ with $\phi_2$ a $p$-power isogeny and $\phi_1$ a prime-to-$p$ isogeny.  As noted in Section \ref{sec:global_selmer_ratios}, we then have $c_\p(\phi) = c_\p(\phi_1)c_\p(\phi_2) = c_\p(\phi_2)$ by Lemma 7.2(b) in \cite{milne}.  Applying Proposition \ref{lem: volcano}, this is equal to $p^{n[F_\p \colon \Q_p]}$, where $p^n$ is the highest power of $p$ dividing $f$.    

Finally, if $\p$ is archimedean, then $\p$ is complex since $F$ necessarily contains $K$.  We therefore have $\prod_{\p \mid \infty} c_\p(\phi) = f^{-[F \colon K]}$.  Putting all of the local computations together, we conclude that
\begin{multline}
c(\phi) = \left(\prod_{p \mid f} \prod_{\p \mid p} c_\p(\phi)\right) \prod_{\p \mid \infty} c_\p(\phi)  = \left ( \prod_{p \mid f} \prod_{\p \mid p} p^{n[F_\p \colon \Q_p]} \right) f^{-[F \colon K]}= \left ( \prod_{p \mid f} p^{n[F \colon \Q]} \right) f^{-[F \colon K]}\\ = \left ( \prod_{p \mid f} p^n\right)^{[F \colon \Q]}  f^{-[F \colon K]} = f^{[F \colon \Q]} f^{-[F \colon K]} = f^{[F \colon K]} = f^d
\end{multline}
as desired.              
\end{proof}

Note that the interesting cases of Theorem \ref{CMsha} are for $f \geq 3$, and in those cases $F$ has degree at least 4 over $\Q$.  The degree of such an $F$ necessarily grows with $f$, since the field of definition for any CM elliptic curve with $\End(E)$ of discriminant $Df^2$ is the ring class field of $K$ of conductor $f$.  

\begin{example}
{\em If $f = 3$, we can take $K$ to be any imaginary quadratic field in which 3 splits.  In this case, $F$ must contain $H(\sqrt{-3})$, where $H$ is the Hilbert class field of $K$.  Indeed, $H(\sqrt{-3})$ is the ring class field of $K$ of conductor $3$ whenever $3$ splits in $K$.  If $K$ has class number 1, then we can take $F = K(\sqrt{-3})$, which is biquadratic over $\Q$.  Theorem \ref{CMsha} then says that at least $50\%$ of  twists $E_s$ have rank 0 and satisfy $|\Sha(E_s)[3]| \geq 9$.  If we base change this $E$ to a number field $F'$ of degree $2d$ over $\Q$, then half of all twists over $F'$ have rank 0 and $\Sha(E_s)[3]$ of size at least $3^d$.    
}
\end{example}


\section{Tate-Shafarevich groups of CM abelian varieties}\label{ab vars}


The approach used in the previous section can be extended to more general CM abelian varieties.  We spell out the details in a particularly pretty example. 

Let $J$ be the   Jacobian of the genus three Picard curve $C \colon y^3 = x^4 - x$.  Over $\Q$, $J$ has good reduction away from 3.  Moreover, $J$ is absolutely simple and has CM by $K = \Q(\zeta_9)$; see \cite[\S12]{BKLOS}.  The complex multiplication is defined over all fields containing $K$, so we will work for now over a general number field $F$ containing $K$.  Also write $K^+$ for the maximal totally real cubic subfield of $K$, which is an abelian cubic extension of $\Q$.    

Let $p$ be a prime of ordinary good reduction for $J$ over $\Q$.  For example, we could take $p$ to be any prime which splits completely in $K$, or in other words such that $p \equiv 1 \pmod 9$.  We can then write $p\O_K = \prod_{i = 1}^6 \p_i$, and $J[p] \simeq \bigoplus J[\p_i]$.  Let us now assume that $F$ contains the field $K(J[p])$ over which the action of $G_K$ on the Galois module $J[p]$ of order $p^6$ becomes trivial.    

For any prime $\p$ of $F$ above $p$, we write $J_{\F_\p}$ for the reduction of $J$ over $\F_\p = \O_F/\p$.  Over the completion $F_\p$, there is a unique subgroup $C_\p$ of $J[p]$ of order $p^3$ which lifts the kernel of the absolute Frobenius $J_{\F_p} \to J_{\F_p}^{(p)} \simeq J_{\F_p}$.  In fact, we may write $C_\p = J[\n_\p]$, where $\n_\p$ is a product of three prime ideals above $p$.  In fact, if we order appropriately, the six ideals $\n_\p$ are:
\[\p_1\p_2\p_3,\,  \p_2\p_3\p_4,\, \p_3\p_4\p_5\]
and their complex conjugates
\[\p_4\p_5\p_6,\,  \p_5\p_6\p_1,\, \p_6\p_1\p_2.\] 
Note that $p\O_K = \n_\p \bar\n_\p$ and $A[p] = A[\n_\p]\oplus A[\bar \n_\p]$, where $\bar \n_\p$ denotes the complex conjugate of $\n_\p$.  Also note that $C_\p$ is defined over the number field $K$, and hence $F$ as well.  We refer to the $C_\p$ as {\it canonical} subgroups.    

\begin{definition}{\em  A subgroup $C \subset J[p]$ of order $p^3$ is {\it anti-canonical} if it intersects trivially with all six  canonical subgroups $C_{\p_i}$.}  
\end{definition}
There are many anti-canonical subgroups of $J[p]$.  We describe one such  subgroup below.
\begin{example}
{\em Over $\C$ we have $J \simeq \C^3/\O_K$, where the embedding of $\O_K$ as a full rank lattice in $\C^3$ is via the CM-type of $J$.  We use this embedding to view all lattices in $K$ as lattices in $\C^3$.  Let $\O_p$ be the order $\O_{K^+} + p\O_K$ of index $p^3$ inside $\O_K$.  There are natural $(\Z/p\Z)^3$-isogenies of complex tori
\[\C^3/\O_K \simeq \C^3/p\O_K \to \C^3/\O_p \hspace{4mm} \mbox{and} \hspace{4mm} \C^3/\O_p \to \C^3/\O_K\]
which descend to isogenies of abelian varieties $\psi \colon J \to A$ and $\phi \colon A \to J$ over $F$. The composition $\phi \circ \psi$ is simply multiplication-by-$p$ on $J$.  Note also that $\End_F(A) \simeq \O_p$ and $\ker \phi = A[p\O_K]$.  If we denote the kernel of $\psi$ by $C$, then we claim that $C \subset J[p]$ has trivial intersection with all six canonical subgroups $C_{\p} = J[\n_\p]$.  This follows from the fact that $\O_p \cap \n_\p = p\O_K$.}
\end{example}

For our purposes, anti-canonical subgroups $C \subset J[p]$ are interesting because they reduce faithfully into the $p$-torsion of the reduction $J_{\F_\p}$, for all primes $\p$ of $K$ above $p$.  In particular, the natural $p^3$-isogeny $\psi \colon J \to J/C$ induces an isomorphism on formal groups, and so $\alpha_{\psi, K_\p} = 1$.  This is the last bit of input we need to prove the following theorem.  

\begin{theorem}
Let $p \equiv 1 \pmod 9$ be a prime, and let $F$ be a number field containing $K(J[p])$.  Let $C$ be any anti-canonical subgroup of $J[p]$, and set $A = J/C$.  Then at least $50\%$ of all quadratic twists $A_s$ have rank $0$ and satisfy $|\Sha(A_s)[p]| \geq p^{3d}$, where $[F \colon \Q] = 2d$.
\end{theorem}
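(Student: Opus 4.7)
The plan is to mirror the proof of Theorem \ref{CMsha} using the dual isogeny $\hat\psi \colon A \to J$ of degree $p^3$. As already established in the setup to the theorem, the anti-canonical hypothesis on $C = \ker\psi$ forces $\ker\hat\psi = \psi(C_\p)$ to be the canonical (connected) subgroup of $A[p]$ at every prime of $F$ above $p$; this is the crucial input. The three steps are (i) to compute $c(\hat\psi_s) = p^{3d}$ and deduce $|\Sel_{\hat\psi_s}(A_s)| \geq p^{3d}$ for all but finitely many $s$ via Proposition \ref{prop:selmer-ratio}, (ii) to bound the average rank of $A_s$ and conclude that at least half of the twists have rank zero, and (iii) to extract the Sha lower bound from the exact sequence \eqref{selmersequence}.

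For the Selmer ratio, I would analyze $c(\hat\psi_s)$ place by place, the point being that each local factor is essentially independent of $s$. Since $J$, and hence $A$, has good reduction away from $3$, Lemma \ref{lem:twistofgoodred} gives $c_\p(\hat\psi_s) = 1$ for all finite $\p \nmid 3p$. At $\p \mid 3$, the degree $p^3$ is coprime to $3$, so $\alpha_{\hat\psi, \p} = 1$, and one checks that the Tamagawa-number ratio across the $p^3$-isogeny is trivial. At each of the $d$ complex archimedean places of the totally complex field $F$, the divisibility of $A(\C)$ together with $|\ker\hat\psi| = p^3$ gives $c_v(\hat\psi_s) = 1/p^3$, contributing $p^{-3d}$ in total. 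At $\p \mid p$, a direct generalization of Proposition \ref{lem: volcano} to three-dimensional CM abelian varieties with good ordinary reduction --- using that $\ker\hat\psi$ is the canonical subgroup, so $\hat\psi$ is purely inseparable on the formal group and looks like multiplication by $p$ on a formal group of dimension $3$ --- yields $c_\p(\hat\psi_s) = p^{3[F_\p:\Q_p]}$, contributing $p^{6d}$ in total over all $\p \mid p$. The product is $c(\hat\psi_s) = p^{3d}$, as required.

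For the rank bound, I would exploit the CM by $K = \Q(\zeta_9)$: the group $A_s(F) \otimes \Q$ is a $K$-vector space, so $\rk A_s(F)$ is a multiple of $[K:\Q] = 6$, and it suffices to bound the average rank strictly below $6$. This will come from a generalization of \cite[Thm.\ 2.7]{BKLOS} to CM abelian varieties, applied to the $3$-isogeny on $A$ induced by the norm-$3$ element $1-\zeta_9 \in \O_K$ (well-defined on $A$ since $\deg\psi = p^3$ is coprime to $3$). This is the hardest step: the BKLOS averaging theorem for Selmer sizes extends to abelian varieties, but the deduction of an average rank bound is only recorded there for elliptic curves, and one must pass from an average $\Sel_\lambda$-size to a bound on the $\O_K$-rank of $A_s(F)$, and then to a $\Z$-rank bound via multiplication by $6$. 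Once at least half of the twists have rank zero, the exact sequence \eqref{selmersequence} applied to $\hat\psi_s$ yields $|\Sha(A_s)[\hat\psi_s]| \geq p^{3d}$ up to bounded torsion, and since $\ker\hat\psi_s \subset A_s[p]$, this sits inside $\Sha(A_s)[p]$.
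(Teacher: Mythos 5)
Your Selmer-ratio computation is essentially the paper's, and the key observation that $C$ anti-canonical implies $\ker\phi$ (your $\ker\hat\psi$) is the canonical subgroup of $A[p]$ at every $\p\mid p$ is exactly right; the paper phrases it as $C$ extending to an \'etale group scheme over $\O_{F_\p}$, so that $\alpha_{\psi,F_\p}=1$ and hence $\alpha_{\phi,F_\p}=\alpha_{[p],F_\p}=p^{3[F_\p:\Q_p]}$, but the content is the same. One small simplification you missed: over $F\supseteq K(J[p])$ the abelian variety $J$ (and hence $A$) has good reduction at \emph{every} finite prime, including those above $3$, by Serre--Tate, so Lemma~\ref{lem:twistofgoodred} covers $\p\mid 3$ directly with no need to ``check a Tamagawa ratio.''

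The genuine gap is in the rank step. You propose to bound the rank of $A_s$ directly via a norm-$3$ isogeny $\lambda$ on $A$ and an unwritten generalization of \cite[Thm.\ 2.7]{BKLOS} to higher-dimensional CM abelian varieties, and you candidly flag this as the hardest and least complete part. The paper avoids this entirely: it invokes \cite[Thm.\ 12.5]{BKLOS}, which is precisely the statement that the average rank of $J_s(F)$ is at most $3$ (so, since the rank is a multiple of $6$, at least $50\%$ of twists have rank $0$) for this Picard Jacobian, and then transfers the conclusion from $J_s$ to $A_s$ by the trivial observation that isogenous abelian varieties have the same rank. Your proposed route also has a technical wrinkle you gloss over: $\End_F(A)=\O_{K^+}+p\O_K$ is a non-maximal order, and $1-\zeta_9$ does not lie in it, so the ``$3$-isogeny on $A$ induced by $1-\zeta_9$'' has to be constructed more carefully (e.g.\ as the quotient by $\psi(J[1-\zeta_9])$, using $\gcd(3,p)=1$) and its Selmer theory is not covered verbatim by the cited BKLOS results. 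Replacing your rank argument with the citation of \cite[Thm.\ 12.5]{BKLOS} applied to $J$ plus isogeny-invariance of rank closes the gap and matches the paper.
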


\begin{proof}
By \cite[Thm.\ 12.5]{BKLOS}, the average rank of $J_s(F)$ is at most 3, and at least $50\%$ of twists have rank 0.  It follows that at least $50\%$ of twists $A_s$ have rank 0 as well.  Let $\psi \colon J \to A$ be the natural quotient with kernel $C$, and let $\phi \colon A \to J$ be the unique isogeny (of degree $p^3$) such that $\phi \circ \psi = [3]$.  We will show that the isogeny $\phi_s \colon A_s \to J_s$ satisfies $c(\phi_s) = p^{3d}$.  It will then follow that for all but finitely many $s \in F^\times/F^{\times2}$, we have $|\Sel_p(A_s)| \geq p^{3d}$. Hence, for those $A_s$ with rank 0, we must have $|\Sha(A_s)[p]| \geq p^{3d}$, which proves the theorem. 

To compute $c(\phi_s)$, we first argue that $c_\q(\phi_s) = 1$ for any prime $\q \nmid p\infty$.  This follows from Lemma \ref{cor:loc-const} and the fact that $A_s$ and $J_s$ have quadratic twists of good reduction at $\q$.  ($J$ has good reduction at all primes of $F$ above 3 by \cite[\S2]{serre-tate}.)  On the other hand, $c_\infty(\phi_s) = p^{-3d}$, since $F$ is totally complex.  For primes $\p$ of $F$ above $p$, we claim that $c_\p(\phi_s) = p^{3[F_\p \colon \Q_p]}$.  First note that $c_\p(\phi_s) = \alpha_{\phi_s,F_\p}$ since $A_s$ and $J_s$ have twists of good reduction.  
We also have $\alpha_{\phi_s, F_\p} = \alpha_{[p], F_\p}\alpha_{\psi_s, F_\p}^{-1}$.  Since $C_s$ is anti-canonical, the extension of $C_s$ to the N\'eron model of $A$ over $\O_{F_\p}$ is \'etale, and hence
\[\alpha_{\phi, F_\p} = \alpha_{[p], F_\p}\alpha_{\psi, F_\p}^{-1} = p^{3[F_\p \colon \Q_p]} \cdot 1 = p^{3[F_\p \colon \Q_p]}.\]  
Putting everything together, we find that 
\[c(\phi_s) = p^{-3d} \cdot  \left (\prod_{\p \mid p} p^{3[F_\p \colon \Q_p]} \right) = p^{-3d}\left(p^{3[F:\QQ]} \right)= p^{-3d}p^{6d}= p^{3d},\]
which concludes the proof.
\end{proof}


\section{Example}
\label{sec:examples}

This section concerns the isogeny class of elliptic curves with Cremona label 14a.  In particular, the curve with Cremona label 14a1
\[
E\colon y^2 + xy + y = x^3+4x-6
\]
has conductor $14$ and admits two independent $3$-isogenies $\phi_1\colon E \to E^\prime$ and $\phi_2\colon E \to E^{\prime\prime}$, where $E^\prime$ and $E^{\prime\prime}$ have Cremona labels 14a3 and 14a4, respectively.  Over $\mathbb{Q}_2$, the curves $E$, $E^\prime$, and $E^{\prime\prime}$ have Kodaira types $\In{6}$, $\In{18}$, and $\In{2}$, respectively.  Over $\mathbb{Q}_7$, $E$ has type $\In{3}$, while $E^\prime$ and $E^{\prime\prime}$ have type $\In{1}$.  Additionally, we find that $\mathbb{Q}_3(\ker\phi_1)$ is unramified while $\mathbb{Q}_3(\ker\phi_2)$ is ramified.

From Proposition \ref{prop:local-selmer-comp} and a straightforward computation, we see that
\[
\mathbb{Z} \setminus \{0\} = T_{-2}(\phi_1) \cup T_{-1}(\phi_1) \cup T_0(\phi_1) \cup T_1(\phi_1).
\]
To compute the densities of these sets, we find it convenient to use a generating function (in fact, a generating polynomial).  The valuation of $c_\infty(\phi_{1,s})$ is either $0$ or $-1$, each with probability $1/2$, corresponding to the sign of $s$, and we compute that $c_3(\phi_{1,s})=1$ for all $s$.  Thus, we find
\[
\sum_{m\in \mathbb{Z}} \mu(T_m(\phi_1)) x^m = \left(\frac{1+x^{-1}}{2}\right) \left(\sum_{U \in \mathbb{Q}_2^\times/(\mathbb{Q}_2^\times)^2} x^{\mathrm{ord}_3 c_2(\phi_{1,s})}\mu_2(U) \right) \left(\sum_{U \in \mathbb{Q}_7^\times/(\mathbb{Q}_7^\times)^2} x^{\mathrm{ord}_3 c_7(\phi_{1,s})} \mu_7(U)\right),
\]
where $\mu_p(U)$ denotes the $p$-adic Haar measure of the set $U \cap \mathbb{Z}_p$ and $s$ is chosen to be a representative of $U$.  We compute the individual factors and find
\begin{align*}
\sum_{m\in \mathbb{Z}} \mu(T_m(\phi_1)) x^m
	&= \left(\frac{1+x^{-1}}{2}\right) \left(\frac{x}{6} + \frac{5}{6}\right) \left(\frac{27}{48}+\frac{21}{48}x^{-1}\right) \\
	&=\frac{35}{192}x^{-2} + \frac{29}{64}x^{-1} + \frac{61}{192} + \frac{3}{64}x.
\end{align*}
Thus, for example, the set $T_0(\phi_1)$ has density $61/192$, so that by Theorem \ref{thm:selmer-average} at least $61/384 \approx 15.88\%$ of twists $E_s/\mathbb{Q}$ have rank 0.  This expression also yields a bound of $\frac{1183}{864} \approx 1.369$ on the average rank of twists $E_s(\mathbb{Q})$ for $s$ squarefree.  Similarly, we compute that
\begin{align*}
\sum_{m\in \mathbb{Z}} \mu(T_m(\phi_2)) y^m 
	&= \left(\frac{1+y^{-1}}{2}\right) \left(\frac{y^{-1}}{6} + \frac{5}{6}\right) \left(\frac{27}{48}+\frac{21}{48}y^{-1}\right) \\
	&= \frac{7}{192} y^{-2} + \frac{17}{64}y^{-1} + \frac{89}{192} + \frac{15}{64} y.
\end{align*}
This yields that a larger proportion $89/384 \approx 23.18\%$ of twists $E_s/\mathbb{Q}$ have rank 0, and a smaller bound of $\frac{1043}{864} \approx 1.207$ on the average rank of $E_s(\mathbb{Q})$ for $s$ squarefree.

We can do much better by combining these isogenies, however.  In particular, we find that
\[
\sum_{m,n\in\mathbb{Z}} \mu(T_m(\phi_1) \cap T_n(\phi_2)) x^my^n = \frac{3}{64} xy^{-1} + \frac{15}{64} x^{-1}y + \frac{35}{192} x^{-2} + \frac{7}{192}y^{-2} + \frac{7}{32}x^{-1}y^{-1} + \frac{9}{32}.
\]
This shows that every squarefree $s$ is in either $T_0(\phi_1) \cup T_0(\phi_2)$ or $T_{-1}(\phi_1) \cup T_1(\phi_1) \cup T_{-1}(\phi_2) \cup T_1(\phi_2)$.  Thus, Corollary \ref{cor:ranks} yields that at least $25\%$ of twists $E_s/\mathbb{Q}$ have rank 0, at least $5/6$ have $3$-Selmer rank one, and that the average rank of $E_s(\mathbb{Q})$ for $s$ squarefree is at most $7/6$.  These rank bounds are the best one can hope for using only the methods of this paper.

Moreover, we are also able to exploit the isogenies $\phi_1$ and $\phi_2$ to produce $3$-torsion elements of Tate-Shafarevich groups.  In particular, the set $T_{-2}(\phi_1) \cap T_0(\phi_2)$ has density $35/192$.  By Proposition \ref{prop:sha-elements}, we find that a proportion $35/384$ of squarefree $s$ are such that $|\Sha(E^\prime_s)[3]| \geq 9$.  Similarly, we find that for at least $7/384$ of squarefree $s$, we have $|\Sha(E^{\prime\prime}_s)[3]| \geq 9$.

In fact, each of the curves $E$, $E^\prime$, and $E^{\prime\prime}$ also has a single rational two-torsion point, and hence a rational 2-isogeny.  The three additional curves that are the codomain of these 2-isogenies complete the isogeny class 14a, whose isogeny graph is given in Figure \ref{fig:14a}.

\begin{center}
\begin{figure}[h]
\begin{tikzpicture}[xscale=2.5,yscale=1.5]
\node (14a3) at (0,1) {14a3};
\node (14a1) at (1,1) {14a1};
\node (14a4) at (2,1) {14a4};
\node (14a5) at (0,0) {14a5};
\node (14a2) at (1,0) {14a2};
\node (14a6) at (2,0) {14a6};

\node at (0,1.1) [above] {\tiny $E^\prime$};
\node at (1,1.1) [above] {\tiny $E$};
\node at (2,1.1) [above] {\tiny $E^{\prime\prime}$};

\draw[<->] (14a3) edge (14a1);
\draw[<->] (14a1) edge (14a4);
\draw[<->] (14a5) edge (14a2);
\draw[<->] (14a2) edge (14a6);
\node at (0.5,1) [above] {3};
\node at (1.5,1) [above] {3};
\node at (0.5,0) [above] {3};
\node at (1.5,0) [above] {3};

\draw[<->] (14a3) edge (14a5);
\draw[<->] (14a1) edge (14a2);
\draw[<->] (14a4) edge (14a6);

\node at (0,0.5) [left] {2};
\node at (1,0.5) [left] {2};
\node at (2,0.5) [left] {2};
\end{tikzpicture}
\caption{The isogeny graph of the isogeny class 14a. \hfill \mbox{}}
\label{fig:14a}
\end{figure}
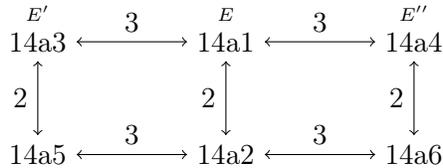
\end{center}

By an easy diagram chase, the global Selmer ratios of the two 3-isogenies in the top row are equal to those of the bottom row, so the analysis for the isogenies in the bottom row is identical to that of the top row.  In particular, exactly the same results hold for the proportion of twists with small rank, and exactly the same results hold on 3-torsion in Tate-Shafarevich groups for the curve 14a5 as do for $E^\prime$, and the same for 14a6 as do for $E^{\prime\prime}$.  In fact, exploiting the $2$-isogeny and Theorem \ref{thm:erdoskac}, it is possible to show that each curve in the family has a positive proportion of twists with arbitrarily large $2$-torsion in their Tate-Shafarevich groups.  

Unfortunately, it is not clear how to prove that any of these curves has a positive proportion of twists with an element of order six in $\Sha$, which is why these curves are the lone exceptional case in Theorem \ref{thm:18-isogeny}.  For example, to produce elements of order three in $\Sha(E^\prime_s)$, we used above that within $T_{-2}(\phi_1) \cap T_0(\phi_2)$, $50\%$ of twists $E^\prime_s(\mathbb{Q})$ have rank 0.  We also know by Theorem \ref{thm:erdoskac} that for $50\%$ of $s\in T_{-2}(\phi_1) \cap T_0(\phi_2)$, we have that $|\mathrm{Sel}_2(E^\prime_s)| \geq 2^{r_2}$ for any $r_2 \geq 0$.  To show that there is an element of order six in $\Sha$, we would need these two sets of density $1/2$ intersect, which we unfortunately see no way to guarantee.

\bibliographystyle{abbrv}
\bibliography{references-new}

\end{document}